 \numberwithin{equation}{section}
 \newtheorem{thm}{Theorem}[section]
 \newtheorem{lem}[thm]{Lemma}
 \newtheorem{defi}[thm]{Definition}
 \newtheorem{cor}[thm]{Corollary}
 \newtheorem{prop}[thm]{Proposition}
 \newtheorem{rem}[thm]{Remark}
\begin{document}
\title{Homogeneous Ulrich bundles on isotropic flag varieties} 
\author{Xinyi Fang
	\thanks{Department of Mathematics, Shanghai Normal University, Shanghai, 200234, PR China, xinyif@shnu.edu.cn. The research is supported by National Natural Science Foundation of China (Grant No. 12471040).}
	\ and Yusuke Nakayama
	\thanks{School of Fundamental Science and Engineering, Waseda University, 3-4-1, Okubo, Shinjuku, Tokyo, 169-8555, yusuke216144@akane.waseda.jp.}}
\date{}

	\maketitle
	\begin{abstract}
In this paper, we consider the existence problem of Ulrich bundles on a rational homogeneous space $G/P$ of type $B$, $C$ or $D$. We show that if the Picard number of $G/P$ is greater than or equal to $2$, then 
there are no irreducible homogeneous Ulrich bundles on $G/P$ with respect to the minimal ample class.
	\end{abstract}
	\textbf{MSC}: {Primary 14F05; Secondary 14M17}
	{\flushleft{{\bf Keywords:} irreducible homogeneous bundle, Ulrich bundle, isotropic flag variety}}
	\section{Introduction}
	Ulrich bundles have been studied extensively in algebraic geometry in recent years.
	A vector bundle $\mathcal{E}$ on a complex smooth polarized projective variety $(X,\mathcal{O}_X(1))$ is said to be {\it Ulrich} if $H^{i}(X,\mathcal{E}(-t))$ vanish for all $0\leq i\leq \dim X$ and $1\leq t\leq \dim X$.
	Such bundles were originally introduced by Ulrich \cite{Ulrich} in the context of commutative algebra.
	Indeed, Ulrich bundles on a smooth projective variety $X$ correspond to maximally generated maximal Cohen--Macaulay $R_X$-modules, where $R_X$ is the graded homogeneous coordinate ring of $X$.
	
There is an intriguing problem to investigate whether every smooth projective variety carries an Ulrich bundle with respect to some polarization, as posed by Eisenbud, Schreyer and Weyman \cite{ESW}.
	The problem is still open except for a few cases.
	Eisenbud, Schreyer and Weyman proved that every projective curve and Veronese variety admits an Ulrich sheaf.
	Beauville \cite{Bea2} showed that there are Ulrich line bundles and rank $2$ Ulrich bundles on some surfaces and the del Pezzo threefold.
	Various other studies have also been conducted on some varieties (see \cite{ACCMT, ACMR, AFO, Bea1, BN, Cas, CKM, Fae, Lop1, Lop2, MRPM}).
	
	Let us consider the case that a smooth projective variety is a rational homogeneous space $G/P$, where $G$ is a complex semi-simple linear algebraic group and $P$ is a parabolic subgroup of $G$.
	We first review the case that $G/P$ has Picard number one, i.e., $P$ is a maximal parabolic subgroup.
	Then simple algebraic groups of all types have been studied.
	When $G$ is of type $A$, $G/P$ is the Grassmann variety.
	Costa and Mir\'{o}-Roig \cite{CM1} investigated all irreducible homogeneous Ulrich bundles on Grassmann varieties.
	In the case of type $B,C$ or $D$, Fonarev \cite{Fon} classified irreducible homogeneous Ulrich bundles on isotropic Grassmann varieties.
	In order to do this, a criterion for an irreducible homogeneous vector bundle to be Ulrich applicable to all $G/P$ with Picard number one was given in the paper. Later, Lee and Park \cite{LP} examined the cases of exceptional types.
	In particular, they proved that the only homogeneous varieties $G/P$ with Picard number one admitting an irreducible homogeneous Ulrich bundle are the Cayley plane $E_{6}/P_{1}$ and the $E_{7}$-adjoint variety $E_{7}/P_{1}$.
	
	We consider the case that $G/P$ has Picard number at least $2$ and its polarization is the minimal ample class (see subsection $2.1$ for the definition of minimal ample class.).
	In type $A$, such variety is a $r$-step partial flag variety $F(k_1,\ldots,k_r;n)$, where $k_1,\ldots,k_r,n$ are positive integers such that $k_1<\cdots<k_r<n$.
	Coskun et al. \cite{CCHMW}
	proved that $F(k_1,\ldots,k_r;n)$ does not carry irreducible homogeneous Ulrich bundles with respect to the minimal ample class if $r\geq3$.
	Furthermore, they classified irreducible homogeneous Ulrich bundles with respect to the minimal ample class on certain two-step flag varieties and conjectured that the two-step flag variety $F(k_{1},k_{2};n)$ does not admit irreducible homogeneous Ulrich bundles with respect to the minimal ample class if $k_{1}\geq3$ and $k_{2}-k_{1}\geq3$.
This conjecture has not been completely solved, although Coskun and Jaskowiak \cite{CJ} gave a partially affirmative answer to this conjecture.
Being motivated by the above works, the first author \cite{Fang} and the second author \cite{Nakayama} independently gave a criterion for an irreducible homogeneous vector bundle on $G/P$ of all types with any Picard number to be Ulrich with respect to any polarization.
	These criteria extend Fonarev's result for rational homogeneous spaces with Picard number one.
	Moreover, the second author showed that when $G$ is of type $E_6$, $F_4$ or $G_2$, $G/P$ does not admit irreducible homogeneous Ulrich bundles with respect to the minimal ample class if the Picard number of $G/P$ is greater than or equal to $2$.
	
	In this paper, we study the existence problem of Ulrich bundles with respect to the minimal ample class on isotropic flag varieties.
	Here an isotropic flag variety means a rational homogeneous space $G/P$ of type $B,\ C$ or $D$.
	The main theorem in this paper is the following.
	
	
\begin{thm}[Theorem \ref{mainthm}]\label{thm}Let $G/P$ be an 
isotropic flag variety. If the Picard number of $G/P$ is greater than or equal to $2$, then there are no irreducible homogeneous Ulrich bundles on	
 $G/P$ with respect to the minimal ample class. 
\end{thm}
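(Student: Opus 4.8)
The plan is to translate the Ulrich condition into a purely combinatorial statement about roots and then contradict it. Write $S$ for the set of marked nodes defining $P$, so that the minimal ample class is $H=\sum_{i\in S}\omega_i$, and recall that an irreducible homogeneous bundle on $G/P$ has the form $\mathcal{E}_\lambda$ for a weight $\lambda$ dominant with respect to the Levi factor $L$. Put $\mu=\lambda+\rho$. By Borel--Weil--Bott, $H^\bullet(G/P,\mathcal{E}_\lambda(-t))$ vanishes exactly when $\mu-tH$ is singular, and since $\langle H,\alpha^\vee\rangle=0$ for every root $\alpha$ of $L$ whereas $\langle\mu,\alpha^\vee\rangle\ge 1$ there, singularity can be produced only by the roots $\alpha$ in the nilradical $\mathfrak{n}$ of $\mathfrak{p}$. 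As $|\mathfrak{n}\cap\Phi^{+}|=\dim G/P=N$ and each such $\alpha$ solves $\langle\mu-tH,\alpha^\vee\rangle=0$ for at most the single value $t_\alpha=\langle\mu,\alpha^\vee\rangle/\langle H,\alpha^\vee\rangle$, the criterion of \cite{Fang,Nakayama} can be read as: $\mathcal{E}_\lambda$ is Ulrich if and only if $\alpha\mapsto t_\alpha$ is a bijection from $\mathfrak{n}\cap\Phi^{+}$ onto $\{1,\dots,N\}$. In particular every $t_\alpha$ must be a positive integer and the $t_\alpha$ must be pairwise distinct, and I will only need to contradict these two weaker properties.

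First I would fix standard coordinates $\mu=(\mu_1,\dots,\mu_n)$, in which the nilradical roots are among $e_i-e_j$, $e_i+e_j$ and, in type $C$, $2e_i$ or, in type $B$, $e_i$, while $H=(h_1,\dots,h_n)$ is the weakly decreasing step function $h_j=\#\{i\in S:i\ge j\}$, which takes at least two distinct positive values because $|S|\ge 2$. Regularity coming from $\rho$ makes $\mu$ strictly decreasing on each block where $h$ is constant, and the relevant values are $t_{e_i-e_j}=(\mu_i-\mu_j)/(h_i-h_j)$, $t_{e_i+e_j}=(\mu_i+\mu_j)/(h_i+h_j)$ and $t_{2e_i}=\mu_i/h_i$. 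The point that makes the argument tractable is that a contradiction is local: it suffices to exhibit, among the roots supported on the coordinates lying in two adjacent positive levels of $h$ (say levels $\ell$ and $\ell-1$ cut out by two marked nodes $p<q$), either two roots with $t_\alpha=t_\beta$ or one root with $t_\alpha\notin\mathbb{Z}$. No global bookkeeping of the bijection is then required.

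The mechanism I expect to drive the contradiction is the clash between the additive relations forced by the difference roots and the divisibility constraints imposed by the sum and short roots. For $a$ in level $\ell$ and $b<c$ in level $\ell-1$ one has relations such as $\mu_a=(\mu_a-\mu_b)+\mu_b$ with matching denominators, i.e.\ $t_{e_a-e_b}+t_{2e_b}=2\,t_{2e_a}$ when $(\ell,\ell-1)=(2,1)$, which pin the candidate integers into a fixed parity or residue class; the short roots $2e_a$ force $h_a\mid\mu_a$ on each level and the sum roots $e_a+e_{a'}$ within a level force further congruences modulo $2h_a$, while a cross-level sum root $e_a+e_b$ carries the coprime denominator $h_a+h_b$ (for example $3$ when $(h_a,h_b)=(2,1)$), whose integrality is incompatible with the residues already forced. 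Carrying this out for the minimal configurations $C_2$ and $B_2$ already yields non-existence, and the general Picard-number-$\ge 2$ case should follow by restricting to two adjacent levels.

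The hard part will be making this local argument uniform across the three types and over all positions of the marked nodes. The delicate configurations are the short-root node of types $B$ and $C$ (the case $q=n$, where the extra roots $2e_i$ or $e_i$ enter and, in type $B$, the half-integral weight $\omega_n$ makes $H$ half-integral) and the fork of type $D$, where a marked spinor node $n-1$ or $n$ again makes $H$ half-integral and where there are no short roots $2e_i$ at all, so the divisibility clash must be produced purely from the sum roots $e_i+e_j$ together with the two-valued step structure. I therefore expect the bulk of the work to be a careful, type-by-type enumeration of the handful of relevant roots in each level pattern, checking in every case that the forced residues leave no room for a bijective, integer-valued $t$.
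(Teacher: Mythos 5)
Your setup is the right one—the criterion you derive from Borel--Weil--Bott is exactly Theorem \ref{FN}, and the general mechanism you name (integrality, distinctness, parity and divisibility clashes among the values $t_\alpha$) is indeed what drives the paper's proof. But your central reduction is flawed, and it hides the actual content of the argument. You propose to contradict only the two \emph{weaker} properties (integrality and injectivity of $\alpha\mapsto t_\alpha$), and to do so \emph{locally}, among roots supported on two adjacent levels of the step function $h$. This cannot work. The Ulrich criterion is a bijection onto $\{1,\dots,\dim X\}$, and the paper's contradictions make essential use of \emph{surjectivity}: the smallest entry of the datum must equal $1$ (forcing $\min_i\overline{a_{d_i}}=1$ in Lemma \ref{lemma2}), the integers $2$ and $4$ must appear \emph{somewhere}, and in the Picard-rank-two cases the top of the range is pinned down ($\dim X-1$ must be the largest even entry, identified as $Q^{12}_{max}$). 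None of this survives restriction to a two-level window: an integer such as $2$ that must occur globally may be realized by a root outside your window, so within the window you only retain integrality and distinctness—and those alone admit solutions (generic large weights with suitable divisibility make all local values distinct integers). Moreover, the divisibility constraints you hope to exploit are themselves global: the lcm lemmas (Lemma \ref{lemma1} and its analogues) are obtained by letting $j$ run from $i$ to $s$, i.e.\ from difference and sum roots crossing \emph{all} levels, and they yield the lower bounds (e.g.\ $\mathfrak{e}\overline{a_n}\ge s(s-1)$) needed to push the $Q$- and $R$-entries above $4$; two adjacent levels only give you denominators $1$, $\ell$, $\ell-1$, $2\ell$, $2\ell-2$, $2\ell-1$, far too little.

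Concretely, your claim that ``the general Picard-number-$\ge 2$ case should follow by restricting to two adjacent levels'' after checking $B_2$ and $C_2$ is the missing idea, not a routine verification: restriction to two levels does \emph{not} reduce to the rank-two case, because the restricted values need not fill an interval of integers. The paper instead argues globally, and the hardest case—type $D_n$ with $s=2$ and $d_2\le n-2$—illustrates why no finite residue check suffices: there the contradiction is not found among small values at all, but via an induction along an entire row of the datum (Proposition \ref{prop3}, the assertion that $x\in P^{12}(1,\,)$ forces $x+1,x+2\in P^{12}(1,\,)\cup P^{22}(1,\,)$), which uses surjectivity at every step and terminates only at the largest entry of that row. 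Your proposal correctly flags the spinor nodes and the $D_n$ fork as delicate, but as written it is a plan whose load-bearing step (localization using only integrality and injectivity) would fail; to repair it you would have to reinstate the global bookkeeping you were trying to avoid.
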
	
	
Combining our result with Fonarev's \cite{Fon},	one obtains a complete classification of irreducible homogeneous bundles which are Ulrich with respect to the minimal ample class on isotropic flag varieties. In order to prove the main theorem, we use the aforementioned criterion.
	In particular, we define the associated datum of an irreducible homogeneous vector bundle and use it to analyze the existence of irreducible homogeneous Ulrich bundles. Here, we mention ACM bundles, an extended class of Ulrich bundles.
	In \cite{CM2},\cite{DFR} and \cite{FNR}, irreducible homogeneous ACM bundles on $G/P$ with Picard number one have been investigated.
	Moreover, the first author \cite{Fang} characterized such bundles on $G/P$ with any Picard number.

\paragraph{Plan of the paper} The paper is organized as follows.
In section $2$, we collect some definitions, notations and restate Theorem \ref{thm} in a precise form.
In particular, we will explain the aforementioned criterion and the associated datum of an irreducible homogeneous vector bundle. In Section $3$, we first give a concrete description of the associated datum on $G/P$, where $G$ is of type $B_n$, $C_n$ or $D_n$. Then we prove the main theorem on a case by case basis.

\paragraph{Notations and conventions}

\begin{itemize}
	\item $X=G/P$: the rational homogeneous space with simple algebraic group $G$ and parabolic subgroup $P$;	
	\item  $\Phi^+$: the set of positive roots;
	\item  $\lambda_i$: the $i$-th fundamental weight;
	\item $\rho$: $\lambda_1+\cdots+\lambda_n$;
	\item $|M|$: the number of entries in
	a set $M$ or a matrix $M$;
	\item $M_{\min}$~(resp. $M_{max}$): the smallest~(resp. largest) entry in an integer matrix $M$;
	\item $M_{uv}$: the element at the $u$-th row and $v$-th column of a matrix $M$;
	\item $lcm(x_1,x_2,\ldots,x_n)$: the least common multiple of integers $x_1,x_2,\ldots,x_n$;
	\item $E_\lambda$: the irreducible homogeneous vector bundle with highest weight $\lambda$;
	\item $T^{\lambda}_{X}$: the associated datum of $E_\lambda$ on $X$.
\end{itemize}
	
\section{Preliminaries}
Throughout this paper, all algebraic varieties and morphisms will be defined over the complex field $\mathbb{C}$.
\subsection{Rational homogeneous spaces}
We will prepare some notations and briefly describe the basic facts about rational homogeneous spaces and the polarizations of these varieties (more details see \cite{Ott} and \cite{Snow}.).
Let $G$ be a semi-simple linear algebraic group over the complex field $\mathbb{C}$ and $T\subset G$ a maximal torus.
We set $\mathfrak{g}:=$Lie$(G)$ and $\mathfrak{h}:=$Lie$(T)$. Let $\Phi$ be the set of roots associated with the pair $(\mathfrak{g},\mathfrak{h})$ and $\Delta:=\{\alpha_{1},\ldots,\alpha_{n}\}\subset \Phi$ a set of fixed simple roots. Let $\Phi^{+}$ be the set of positive roots.
The {\it weight lattice} $\Lambda$ is the set of all linear functions $\lambda:\mathfrak{h}\to\mathbb{C}$ for which $\frac{2(\lambda,\alpha)}{(\alpha,\alpha)}\in \mathbb{Z}$ for any $\alpha \in \Phi$, where $(,)$ denotes the Killing form. 
Denote by $\lambda_{1},\ldots,\lambda_{n}\in \Lambda$ the {\it fundamental weights\/}, i.e., $\frac{2(\lambda_{i},\alpha_{j})}{(\alpha_{j},\alpha_{j})}=\delta_{ij}$.

Let $\{1,2,\ldots,n\}$ be the index set of Dynkin nodes of $G$ and $J$ a subset of $\{1,2,\ldots,n\}$.
Let $\Phi^{-}_J$ be a set consisting of negative roots $\alpha$ with $\alpha=\sum_{j\notin J}a_{j}\alpha_{j}$. Let
$$\mathfrak{p}_{J}:=\mathfrak{h}{\huge \oplus}(\oplus_{\alpha\in\Phi^{+}}\mathfrak{g}_{\alpha})\oplus(\oplus_{\alpha\in\Phi^{-}_{J}}\mathfrak{g}_{\alpha})$$
and $P_{J}$ be the parabolic subgroup of $G$ whose Lie algebra is $\mathfrak{p}_{J}$, where each $\mathfrak{g}_{\alpha}$ is a one-dimensional eigenspace with respect to the adjoint action of $\mathfrak{h}$.
In our conventions, $P_J$ is a maximal parabolic subgroup of $G$ when $J$ is a subset of a single element.
The quotient $G/P_J$ is called a {\it rational homogeneous space}.
For example, $A_n/P_{\{k\}}$ is the usual Grassmann variety $Gr(k,n+1)$.
We call $G/P_J$ an {\it isotropic flag variety} if $G$ is a classical algebraic group of type $B_n$, $C_n$ or $D_n$.

Since every semi-simple linear algebraic group can be decomposed into a direct product of simple linear algebraic groups, every rational homogeneous space $G/P_J$ can be decomposed into a product
$$G/P_J\cong G_1/P_{J_1}\times G_2/P_{J_2}\times \cdots \times G_m/P_{J_m}$$
of rational homogeneous spaces with simple linear algebraic groups $G_i$ and parabolic subgroups $P_{J_i}$.
Therefore, from now on, we only consider the case where $G$ is a simple linear algebraic group.

A rational homogeneous space $G/P_{J}$ has $|J|$ projections $\pi_{j}$ from $G/P_{J}$ to $G/P_{\{ j\}}$, where $|J|$ is the number of elements in $J$.
The Picard  group of $G/P_{J}$ is generated by $L_{j}:=\pi^{*}_{j}\mathcal{O}_{G/P_{j}}(1)$.
Hence the Picard number of $G/P_{J}$ is equal to $|J|$. Let $\otimes_{j\in J}L_{j}^{\otimes b_{j}}~(b_j>0)$ be a very ample line bundle on $G/P_{J}$. Then there is a natural embedding $G/P_{J}\subset \mathbb{P}(V^{*})$ with $V=H^0(G/P_{J},\otimes_{j\in J}L_{j}^{\otimes b_{j}})$. If $b_{j}=1$ for every $j\in J$, we say $G/P_{J}$  in its {\it minimal\ homogeneous\ embedding} and call
$\otimes_{j\in J}L_{j}$ the {\it minimal\ ample\ class}. In this paper, we focus on the isotropic flag varieties in their minimal homogeneous embeddings.

\subsection{Homogeneous vector bundles}
Next, we consider vector bundles on $G/P_{J}$.
In particular, we introduce an important class of vector bundles on this variety.
\begin{defi} Let $E$ be a vector bundle on $G/P_{J}$. $E$ is called {\it homogeneous} if there exists an action of $G$ over $E$ such that the following diagram commutes
	\[
	\begin{CD}
		G\times E @>>> E \\
		@VVV    @VVV \\
		G\times G/P_{J}   @>>>  G/P_{J}.
	\end{CD}
	\]
\end{defi}

It is evident from this definition that the tangent bundle $T(G/P_J)$ is homogeneous.
It is well known that homogeneous vector bundles correspond to representations of parabolic subgroup.

\begin{lem}[\cite{Ott} Theorem $9.7$] A vector bundle $E$ of rank $r$ on $G/P_J$ is homogeneous if and only if there exists a representation $\rho:P_{J}\to GL(r)$ such that $E\cong E_{\rho}$, where $E_{\rho}$ is the associated vector bundle.
\end{lem}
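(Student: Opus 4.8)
The plan is to prove the two implications separately, using the associated vector bundle construction as the bridge. Recall that to a representation $\rho:P_J\to GL(V)$ with $\dim V=r$ one associates the bundle $E_\rho:=G\times_{P_J}V$, the quotient of $G\times V$ by the right $P_J$-action $(g,v)\cdot p=(gp,\rho(p)^{-1}v)$, equipped with the projection $[g,v]\mapsto gP_J$ onto $G/P_J$. Since $G\to G/P_J$ is a Zariski-locally trivial principal $P_J$-bundle, this quotient is an algebraic vector bundle of rank $r$. I would then show that $E_\rho$ is always homogeneous (the ``if'' direction) and that every homogeneous bundle arises this way (the ``only if'' direction).

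For the ``if'' direction, I would equip $E_\rho$ with the $G$-action induced by left translation on the first factor, $h\cdot[g,v]:=[hg,v]$. This is well defined because left multiplication commutes with the right $P_J$-action, it is linear on each fibre, and it covers the standard left action of $G$ on $G/P_J$; hence $E_\rho$ is homogeneous in the sense of the definition above.

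For the ``only if'' direction, suppose $E$ is homogeneous with action $\mu:G\times E\to E$. Let $o=eP_J$ be the base point, whose stabilizer in $G$ is exactly $P_J$. Since $\mu$ covers the action on $G/P_J$ and is fibrewise linear, each $p\in P_J$ sends the fibre $E_o$ to $E_{p\cdot o}=E_o$ linearly, yielding a representation $\rho:P_J\to GL(E_o)\cong GL(r)$. I would then define $\Psi:E_\rho=G\times_{P_J}E_o\to E$ by $\Psi([g,v])=\mu(g,v)$. The identity $\mu(gp,\rho(p)^{-1}v)=\mu\bigl(g,\mu(p,\rho(p)^{-1}v)\bigr)=\mu(g,v)$ shows that $\Psi$ is well defined; it is a morphism of vector bundles over $G/P_J$ which is fibrewise linear, and the transitivity of the $G$-action on the base together with the invertibility of $\mu(g,-)$ make it bijective, hence an isomorphism, equivariant by construction.

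The main obstacle is not the set-theoretic bookkeeping but the algebraicity: I must check that $\rho$ is a morphism of algebraic groups and that $\Psi$ is an isomorphism of algebraic, not merely topological, vector bundles. Both reduce to the local triviality of $G\to G/P_J$: choosing a local algebraic section over an open $U\ni o$ trivializes both $E$ and $E_\rho$, and in these trivializations $\rho$ and $\Psi$ are expressed through the regular map $\mu$, so regularity is inherited. Verifying that these local descriptions glue across an open cover, i.e.\ that the transition data of $E$ are precisely the cocycle prescribed by $\rho$, is the one step that requires genuine care.
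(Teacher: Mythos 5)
Your argument is correct and is exactly the classical associated-bundle proof: the paper itself gives no proof of this lemma, citing it from Ottaviani's lecture notes (Theorem 9.7), and your construction of $E_\rho=G\times_{P_J}V$, the representation $\rho$ on the fibre over the base point, and the equivariant isomorphism $\Psi([g,v])=\mu(g,v)$ is the standard argument found there. Your handling of the algebraicity issues via Zariski-local triviality of $G\to G/P_J$ (which holds for parabolic subgroups by the big-cell argument) is also the right way to discharge the only nontrivial point, granting the standard convention, implicit in the paper's definition, that the lifted $G$-action is linear on fibres.
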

Let $E$ be a homogeneous vector bundle on $G/P_{J}$.
By the above lemma, there is a representation of $P_J$ corresponding to $E$.
If this representation is irreducible, we call $E$ an {\it irreducible\ homogeneous\ vector\ bundle}.
We now describe all irreducible representations of $P_{J}$.
Let $\lambda_{j}~(j\in J)$ be the corresponding fundamental weights and $S_{P_{J}}$ the semi-simple part of $P_{J}$.
Then all irreducible representations of $P_{J}$ are
$$V\otimes \left(\otimes_{j\in J} L_{\lambda_{j}}^{t_{j}}\right),~t_{j}\in \mathbb{Z}$$
where $V$ is an irreducible representation of $S_{P_{J}}$ and $L_{\lambda_{j}}$ is a one-dimensional representation with weight $\lambda_{j}$.
If $\lambda$ is the highest weight of an irreducible representation $V$ of $S_{P_{J}}$, we say that $\lambda+\sum_{j\in J}t_{j}\lambda_{j}$ is the highest weight of an irreducible representation $V\otimes \left(\otimes_{j\in J} L_{\lambda_{j}}^{t_{j}}\right)$ of $P_{J}$.

In this paper, we denote $E_{\lambda}$ by the irreducible homogeneous vector bundle arising from the irreducible representation of $P_{J}$ with highest weight $\lambda$.
Unfortunately, many interesting bundles are not irreducible.
Nevertheless, it is often possible to draw conclusions about an arbitrary homogeneous vector bundle from the irreducible case by considering a filtration (c.f. \cite{Snow} Section 5).
Hence we only consider the irreducible homogeneous vector bundles.

Now that the notations are ready, the main theorem is stated again.
\begin{thm}\label{mainthm}Let $G$ be a linear algebraic group of type $B_n$, $C_n$ or $D_n$ and $P_J$ a parabolic subgroup of $G$ that is not maximal (i.e., $|J|\geq2$).
	Then the isotropic flag variety $G/P_J$ does not admit irreducible homogeneous Ulrich bundles with respect to the minimal ample class. 
\end{thm}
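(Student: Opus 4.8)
The plan is to reduce the statement to a purely combinatorial assertion about the associated datum and then rule it out case by case. By the criterion recalled in Section~2, an irreducible homogeneous bundle $E_\lambda$ on $X=G/P_J$ is Ulrich with respect to the minimal ample class if and only if the multiset of entries of $T^\lambda_X$ is exactly $\{1,2,\ldots,N\}$, where $N=\dim X=|T^\lambda_X|$; in particular every value in this range must be attained exactly once. So it suffices to show that for no weight $\lambda$ that is dominant for the semisimple part of $P_J$ can the entries of $T^\lambda_X$ form a permutation of $\{1,\ldots,N\}$.

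First I would write the datum explicitly in standard coordinates. Setting $\ell_i:=\langle\lambda+\rho,\epsilon_i\rangle$, Bott vanishing translates the Ulrich condition into a statement about the integers $\langle\lambda+\rho,\beta^\vee\rangle$ as $\beta$ runs over the positive roots in the nilradical of $\mathfrak{p}_J$. For types $B_n$, $C_n$, $D_n$ these are precisely the numbers $\ell_i-\ell_j$ and $\ell_i+\ell_j$, together with the short/long contributions $2\ell_i$ (type $B$) or $\ell_i$ (type $C$), the index ranges being dictated by which roots cross the cuts $J=\{j_1<\cdots<j_m\}$. Dominance of $\lambda$ for the semisimple part forces $\ell_i>\ell_{i+1}$ whenever $i$ and $i+1$ lie in a common block of $\{1,\ldots,n\}\setminus J$, while across a cut the $\ell$'s are unconstrained; this block structure is exactly what $T^\lambda_X$ organizes into a matrix.

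The core of the argument is then an extremal and congruence analysis of this matrix. Since $|J|\geq2$, there are at least two blocks, and I would exploit the interaction between the ``difference'' entries $\ell_i-\ell_j$ crossing a cut and the ``sum'' entries $\ell_i+\ell_j$: the relation $(\ell_i+\ell_j)-(\ell_i-\ell_j)=2\ell_j$ links such pairs and strongly constrains the distribution of the entries modulo $2$, which one compares against the exact count of even numbers in $\{1,\ldots,N\}$. Pinning the minimal entry to $1$ and the maximal entry to $N$ then forces two of the $\ell_i$ to be nearly equal, which in turn produces a repeated value (two distinct nilradical roots giving the same pairing) and contradicts injectivity. I would carry this out on a case-by-case basis according to the type ($B$, $C$, or $D$), according to whether the special node $n$ lies in $J$, and according to the relative positions of the cuts.

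The hard part will be making a single obstruction work uniformly. The difficulty is that the $\ell_i$ are only partially ordered, so one cannot globally identify which root realizes $1$ or $N$; moreover the node $n$ behaves differently in each type (short roots and the factor $2\ell_i$ in type $B$, long roots $2\epsilon_i$ in type $C$, the fork in type $D$), and the configurations where two cuts are adjacent or where $n\in J$ degenerate the block structure and must be treated separately. I expect the bulk of the work to be careful bookkeeping of these boundary configurations, together with a direct check of small-rank base cases, so as to confirm that in every case the entries of $T^\lambda_X$ either skip a value or repeat one, and hence $\{1,\ldots,N\}$ is never attained.
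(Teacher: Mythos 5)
Your overall strategy coincides with the paper's at the top level --- reduce via the criterion of Theorem \ref{FN} to showing that $T_X^{\lambda}$ can never equal $\{1,2,\ldots,\dim X\}$, then run a parity/extremal analysis case by case in type, in whether the special node lies in $J$, and in the cut configuration --- but the proposal has two concrete gaps. First, your translation of the datum is off: the entries of $T_X^{\lambda}$ are not the integers $(\lambda+\rho,\beta^{\vee})$ themselves (the numbers $\ell_i\pm\ell_j$, $2\ell_i$, $\ell_i$ you list), but the ratios $\frac{(\lambda+\rho,\alpha)}{(\sum_{j\in J}\lambda_j,\alpha)}$, whose denominators ($j-i+1$, $2s+1-(i+j)$, $2(s+1-i)$, and their analogues when $d_s=n$) record how many cuts of $J$ the root crosses. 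These denominators are the engine of the whole argument: integrality of differences of adjacent entries of the matrices $P^{ij},Q^{ij},R^{i}$ forces the divisibility constraints $lcm(1,2,\ldots,2s-i+1)\mid\overline{a_k}$ for $k\notin J$ (Lemma \ref{lemma1} and its analogues), from which one deduces that $\overline{a_{d_1}},\ldots,\overline{a_{d_s}}$ are $s$ \emph{distinct odd} numbers (Lemma \ref{lemma2}). Without the denominators, the congruence analysis you invoke has no source of congruences, and the mod-$2$ bookkeeping you describe cannot be set up.

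Second, your proposed central mechanism --- pin the minimum to $1$ and the maximum to $\dim X$, force two $\ell_i$ to be nearly equal, and extract a repeated entry --- works only in the easiest cases and fails precisely where the theorem is hard. The paper's proof is not one uniform obstruction: for $s\ge 3$ in types $B$/$C$ it shows $2$ can only be $P^{t,t+1}_{min}$, forcing $d_{t+1}=d_t+1$ and $(\overline{a_{d_t}},\overline{a_{d_{t+1}}})=(1,3)$ up to order, and then excludes $4$ from the entire datum through six subcases; for $s=2$ it does find a duplicate entry via $Q^{12}_{max}=\dim X-1$, which is close in spirit to your idea; but for type $D_n$ with $s=2$ and $d_2\le n-2$ no min/max duplication suffices, and the paper needs the inductive Proposition \ref{prop3}: every entry $x$ of the first row of $P^{12}$ must be followed by $x+1$ or $x+2$ again in the first row of $P^{12}$ or of $P^{22}$, an assertion that is then contradicted at the largest entry $P^{12}_{1,n-d_2}$. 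That gap-chasing induction along consecutive integers --- rather than a near-equality of two $\ell_i$ producing a collision --- is the missing idea in your sketch, and your own caveat that the ``bulk of the work'' remains is accurate: the entire content of the theorem lies in that bookkeeping, which the proposal does not supply.
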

\subsection{Criterion for irreducible homogeneous bundles to be Ulrich}
In this subsection, we review a criterion for an irreducible homogeneous vector bundle on $G/P_{J}$ to be Ulrich with respect to the minimal ample class obtained by the first author \cite{Fang} and the second author \cite{Nakayama}. Let us first recall the definition of an Ulrich bundle.
\begin{defi}
	Let $X\subset \mathbb{P}^N$ be a projective variety and $\mathcal{O}_X(1)$ the corresponding very ample line bundle. We say $E$ is an Ulrich bundle on $X$ if
	\begin{align*}
		\left\{
		\begin{array}{lll}
			h^i(X,E\otimes\mathcal{O}_X(-t))=0, & \text{for}~0< i <\dim X~\text{and all}~t\in \mathbb{Z};\\
			h^0(X,E\otimes\mathcal{O}_X(-t))=0, & \text{for}~t\ge 1;\\
			h^{\dim X}(X,E\otimes\mathcal{O}_X(-t))=0, & \text{for}~t\le \dim X.\\
		\end{array}
		\right.   
	\end{align*}
\end{defi}

Let $X=G/P_{J}\subset \mathbb{P}^N$ be a rational homogeneous space in its minimal homogeneous embedding. Let $\Phi_{J}^{+}$ be a set defined by
$$\Phi_{J}^{+}:=\{\alpha\in\Phi^{+}\ |\ (\lambda_{j},\alpha)\neq 0\ {\rm for\ some}\ j\in J\}.$$ 
Note that the number of elements in $\Phi_{J}^{+}$ is equal to the dimension of $X$.

Let $E_{\lambda}$ be an irreducible homogeneous vector bundle on $X$ with highest weight $\lambda=\sum_{i=1}^{n}a_{i}\lambda_{i}$ and $a_j$ non-negative integer for every $j\notin J$.
We define a map $\varphi^{\lambda}$ 
and its associated datum $T_X^\lambda$ 
as follows:
$$\varphi^{\lambda}:\Phi_{J}^{+} \to \mathbb{Q},\ \  \alpha\mapsto\frac{(\lambda+\rho,\alpha)}{(\sum_{j\in J}\lambda_{j},\alpha)},\ {\rm and}\ T_X^\lambda:= Image(\varphi^{\lambda}),$$
where $\rho$ is the half sum of all positive roots.
We now state the criterion for an irreducible homogeneous vector bundle to be Ulrich with respect to the minimal ample class. 

\begin{thm}[\cite{Fang} Theorem $1.3$,\cite{Nakayama}]\label{FN}With the notations as above, the following are equivalent.\\
	(1) $E_{\lambda}$ is an Ulrich vector bundle with respect to the minimal ample class.\\
	(2) $T_X^\lambda=\{1,2,\ldots,\dim X\}$.
\end{thm}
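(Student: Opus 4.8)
The plan is to reduce everything to the Borel--Weil--Bott theorem, which computes the cohomology of the twists $E_\lambda\otimes\mathcal{O}_X(-t)=E_{\lambda-t\sigma}$, where I abbreviate $\sigma:=\sum_{j\in J}\lambda_j$ (the highest weight of the minimal ample class). Writing $\mu_t:=\lambda+\rho-t\sigma$, the version of Bott's theorem for $G/P_J$ states that if $\mu_t$ is singular (i.e. $(\mu_t,\alpha)=0$ for some $\alpha\in\Phi^+$) then all cohomology of $E_{\lambda-t\sigma}$ vanishes, while if $\mu_t$ is regular the cohomology is concentrated in the single degree $d(t):=|\{\alpha\in\Phi^+\mid(\mu_t,\alpha)<0\}|$ (equivalently the Weyl-group length $\ell(w_t)$ of the element carrying $\mu_t$ into the dominant chamber). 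First I would observe that, since $E_\lambda$ is a genuine bundle, $\lambda$ is dominant for the Levi factor, so $(\lambda+\rho,\alpha)>0$ for every positive root $\alpha$ lying in the Levi part; as each such root is orthogonal to $\sigma$, it satisfies $(\mu_t,\alpha)=(\lambda+\rho,\alpha)>0$ for all $t$ and never contributes. Hence only the roots of $\Phi_J^+$ matter, and because $(\sigma,\alpha)>0$ there, dividing yields the equivalences $(\mu_t,\alpha)=0\iff t=\varphi^\lambda(\alpha)$ and $(\mu_t,\alpha)<0\iff\varphi^\lambda(\alpha)<t$. This gives the two dictionary entries I would rely on: for integer $t$, the weight $\mu_t$ is singular $\iff t\in T_X^\lambda$, and when $\mu_t$ is regular its cohomological degree is $d(t)=|\{\alpha\in\Phi_J^+\mid\varphi^\lambda(\alpha)<t\}|$, a quantity increasing from $0$ to $|\Phi_J^+|=\dim X$ as $t$ grows.

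For the implication (1) $\Rightarrow$ (2) I would argue directly. Suppose $E_\lambda$ is Ulrich and fix an integer $t$ with $1\le t\le\dim X$. The three vanishing conditions then force $h^i(X,E_{\lambda-t\sigma})=0$ for every $i$: the middle degrees vanish by the first condition, degree $0$ vanishes by the second (since $t\ge1$), and degree $\dim X$ vanishes by the third (since $t\le\dim X$). By the dictionary, total vanishing means $\mu_t$ is singular, hence $t\in T_X^\lambda$. Therefore $\{1,\ldots,\dim X\}\subseteq T_X^\lambda$. As $T_X^\lambda$ is the image of a map on $\Phi_J^+$ and $|\Phi_J^+|=\dim X$, we have $|T_X^\lambda|\le\dim X$, so the inclusion must be an equality, giving $T_X^\lambda=\{1,\ldots,\dim X\}$.

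For (2) $\Rightarrow$ (1) I would run the dictionary in reverse. If $T_X^\lambda=\{1,\ldots,\dim X\}$, then $\varphi^\lambda$ is a bijection from $\Phi_J^+$ onto $\{1,\ldots,\dim X\}$, since a $\dim X$-element image coincides with a $\dim X$-element domain. Consequently, for integer $t$ the weight $\mu_t$ is singular exactly when $t\in\{1,\ldots,\dim X\}$, where all cohomology vanishes; for $t\le0$ the count gives $d(t)=0$, so cohomology sits only in degree $0$; and for $t\ge\dim X+1$ it gives $d(t)=\dim X$, so cohomology sits only in the top degree. Reading the three Ulrich conditions off this trichotomy is then immediate: the middle cohomology vanishes for every $t$; $h^0$ vanishes once $t\ge1$ (where $\mu_t$ is either singular or has $d(t)=\dim X>0$); and $h^{\dim X}$ vanishes for $t\le\dim X$ (where $\mu_t$ is either singular or has $d(t)=0$). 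Hence $E_\lambda$ is Ulrich.

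The step I expect to be the main obstacle is the careful formulation of Bott's theorem in the parabolic setting together with the verification of the degree formula $d(t)=|\{\alpha\in\Phi_J^+\mid\varphi^\lambda(\alpha)<t\}|$. Here one should identify $H^i(G/P_J,E_{\lambda-t\sigma})$ with $H^i(G/B,L_{\lambda-t\sigma})$ via the pushforward along $G/B\to G/P_J$ (valid because $\lambda-t\sigma$ stays Levi-dominant, the coefficients $a_i$ for $i\notin J$ being untouched by $\sigma$), then check that the Bott length $\ell(w_t)$ equals the stated count of negative pairings and that the Levi roots neither create walls nor shift the degree. Once this bookkeeping is secured, both implications collapse to short counting arguments hinging on the single numerical coincidence $|\Phi_J^+|=\dim X$.
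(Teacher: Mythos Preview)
The paper does not actually prove this statement; it is quoted from the references \cite{Fang} and \cite{Nakayama} and then used as a black box. Your Borel--Weil--Bott argument is correct and is precisely the method those references employ: reduce $H^i(G/P_J,E_{\lambda-t\sigma})$ to $H^i(G/B,L_{\lambda-t\sigma})$ via Levi-dominance, observe that the Levi roots never lie on a wall of $\mu_t=\lambda+\rho-t\sigma$, and then translate Bott's dichotomy (singular $\Leftrightarrow$ total vanishing; regular $\Rightarrow$ cohomology in a single degree counted by negative pairings) into the bijection $\varphi^\lambda\colon\Phi_J^+\to\{1,\dots,\dim X\}$. Both implications then follow from the cardinality coincidence $|\Phi_J^+|=\dim X$, exactly as you outline.
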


In this paper, we consider the existence problem of Ulrich bundles with respect to the minimal ample class. Theorem \ref{FN} and \cite[Remark 3.4]{Fang} give us a necessary condition for $E_{\lambda}$ to be Ulrich, that is $a_i\ge 0$ for any $1\le i\le n$.
 We use Theorem \ref{FN} to prove the main theorem is tightly centered around the following criteria. Suppose $E_{\lambda}$ is an irreducible homogeneous Ulrich bundle on $X$ and $T_X^\lambda$ is its associated datum. Then 
\begin{itemize}
	\item All entries of $T_X^\lambda$ must be integers;
	\item All integers between $1$ and $\dim X$ must appear in $T_X^\lambda$;
	\item All entries of $T_X^\lambda$ are distinct.
\end{itemize}

\section{Homogeneous Ulrich bundles on isotropic flag varieties}
Throughout this section, we let $X=G/P_J\subset\mathbb{P}^N$ be an isotropic flag variety in its minimal homogeneous embedding, where $G$ is a simple algebraic group of type $B_n$, $C_n$ or $D_n$ and $J=\{d_1,\ldots,d_s\}\subset \{1,2,\ldots,n\}$. Let $E_{\lambda}$ be an irreducible homogeneous vector bundle on $X$ with highest weight $\lambda=a_1\lambda_1+\cdots+a_n\lambda_n$ and $T_X^{\lambda}$ its associated datum.
In \cite{Fang} Section $3.2$, the author explicitly describes the form of $T_X^{\lambda}$ in terms of $a_i$ for different types of $G$ and different values of $d_s$. Here, we recall them for the sake of self-consistency.

 For convenience, we always denote $\overline{a_k}:=a_k+1~(1\le k\le n)$ and denote $M_{min}$~(resp. $M_{max}$) by the smallest~(resp. largest) entry in a matrix $M$. We will prove that if the Picard number $\rho(X)=|J|=s$ is greater than or equal to $2$, there are no irreducible homogeneous Ulrich bundles on $X$ with respect to the minimal ample class.

\subsection{Type $B_n$ or $C_n$}
When $G$ is of type $B_n$ or $C_n$, we prove the main theorem separately depending on whether $d_s$ is $n$.
\subsubsection{$J=\{d_1,\ldots,d_s\}$ with $d_s\ne n$}\label{section1}

According to \cite[Section 3.2.1 II]{Fang}, when $J$ is a subset of $  \{1,2,\ldots,n\}$ consisting of  $d_1,\ldots,d_s$ with $d_s\ne n$, the associated datum $T_X^{\lambda}$ of $E_{\lambda}$ is of the following form. (By convention we set $d_0=0$ and $d_{s+1}=n$.)\\

 $T_X^{\lambda}=\{P^{ij}, Q^{ij}~(1\le i\le j\le s),R^{i}~(1\le i\le s)\}$, where
\begin{align*}
	&P^{ij}_{uv}=\frac{\sum\limits_{k=d_i-u+1}^{d_j+v-1}\overline{a_k}}{j-i+1}~(1\le u\le d_i-d_{i-1},1\le v\le d_{j+1}-d_j);\\
	&Q^{ij}_{uv}=\frac{\sum\limits_{k=d_{i-1}+u}^{n-1}\overline{a_k}+\sum\limits_{k=d_j+v}^{n-1}\overline{a_k}+2\mathfrak{e}\overline{a_n}}
	{2s+1-(i+j)}
		~(1\le u\le d_i-d_{i-1},1\le v\le d_{j+1}-d_j);\\
	&R^{i}_{uv}=\frac{\sum\limits_{k=d_{i-1}+u}^{n-1}\overline{a_k}+\sum\limits_{k=d_{i-1}+v}^{n-1}\overline{a_k}+2\mathfrak{e}\overline{a_n}}
	{2(s+1-i)}~(1\le u\le v\le d_i-d_{i-1}),  
\end{align*}
and
\[\mathfrak{e}=\left\{\begin{matrix}\frac{1}{2}& \text{if}~G ~\text{is of type} ~B_n,\\
	1& \text{if}~G ~\text{is of type} ~C_n.\\
\end{matrix}\right.\]

 In order to prove the nonexistence of irreducible homogeneous Ulrich bundles on $X$, we first prove the following two lemmas.
 \begin{lem}\label{lemma1} 
 Let $E_{\lambda}$ be an irreducible homogeneous bundle on $X$ with highest weight $\lambda=a_1\lambda_1+\cdots+a_n\lambda_n$. If $E_{\lambda}$ is an Ulrich bundle, then for any integer $1\le i\le s+1$ and $d_{i-1}+1\le k\le d_{i}-1$, we have
 $$lcm(1,2,\ldots, 2s-i+1)\mid\overline{a_k}$$
 and $lcm(1,2,\ldots,s)\mid\mathfrak{e}\overline{a_n}$, where $lcm(x_1,x_2,\ldots,x_n)$ is the least common multiple of integers $x_1,x_2,\ldots,x_n$.
 \end{lem}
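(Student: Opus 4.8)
The plan is to run everything off the three bullet conditions stated just after Theorem~\ref{FN}: since $E_\lambda$ is Ulrich, every entry of $T_X^\lambda$ is an integer. The engine of the proof is that the difference of two entries sharing a common denominator is again an integer, and that two entries differing in a single summation bound isolate one factor $\overline{a_k}$ over that denominator. Explicitly, varying the column and row indices gives
\[
P^{ab}_{u,v+1}-P^{ab}_{u,v}=\frac{\overline{a_{d_b+v}}}{b-a+1},\qquad
P^{ab}_{u+1,v}-P^{ab}_{u,v}=\frac{\overline{a_{d_a-u}}}{b-a+1},
\]
\[
Q^{ab}_{u,v+1}-Q^{ab}_{u,v}=\frac{-\overline{a_{d_b+v}}}{2s+1-a-b},\qquad
Q^{ab}_{u+1,v}-Q^{ab}_{u,v}=\frac{-\overline{a_{d_{a-1}+u}}}{2s+1-a-b},
\]
\[
R^{a}_{u,v+1}-R^{a}_{u,v}=\frac{-\overline{a_{d_{a-1}+v}}}{2(s+1-a)}\quad(u\le v).
\]
Each such difference forces a divisibility $m\mid\overline{a_k}$. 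Thus the first assertion reduces to a bookkeeping problem: for a fixed gap index $k$ with $d_{i-1}<k<d_i$, show that the set of denominators $m$ realizable in this way is exactly $\{1,2,\dots,2s-i+1\}$.

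I would fix such a $k$ and record, from every family in which $k$ occurs at an endpoint of a summation range, the resulting divisor. For $2\le i\le s$ the relevant contributions are: the lower endpoint of $P^{i,b}$ $(b=i,\dots,s)$, giving all $m\in\{1,\dots,s-i+1\}$; the lower endpoint of $Q^{i,b}$ $(b=i,\dots,s)$, giving $m\in\{s+1-i,\dots,2s+1-2i\}$; the column difference of $R^{i}$, giving the single value $m=2(s+1-i)$; and the upper endpoint of $Q^{a,i-1}$ $(a=1,\dots,i-1)$, giving $m\in\{2s+3-2i,\dots,2s+1-i\}$. The delicate point — and the real content of the lemma — is that these four blocks abut without gaps: the $P$- and $Q$-lower ranges join at $s+1-i$, the single value $2(s+1-i)$ supplied by $R$ bridges the gap between the top $2s+1-2i$ of the $Q$-lower range and the bottom $2s+3-2i$ of the $Q$-upper range, and the $Q$-upper range reaches the target $2s+1-i$. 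Hence their union is $\{1,\dots,2s-i+1\}$ and $\mathrm{lcm}(1,\dots,2s-i+1)\mid\overline{a_k}$. The two boundary cases are handled the same way: for $i=1$ the families with second superscript $i-1=0$ are absent, but $P^{1,b}$-lower, $Q^{1,b}$-lower and $R^1$ already cover $\{1,\dots,2s\}$, while for $i=s+1$ only the upper-endpoint families survive, and $P^{a,s}$ alone already yields $\{1,\dots,s\}=\{1,\dots,2s-i+1\}$.

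For the last assertion I would isolate $\mathfrak{e}\overline{a_n}$ by combining the two families that carry it. Writing $R^{i}_{uu}=\bigl(\sum_{k=d_{i-1}+u}^{n-1}\overline{a_k}+\mathfrak{e}\overline{a_n}\bigr)/(s+1-i)$ and using that $Q^{i,s}_{uv}$ has the same denominator $s+1-i$, the first sum cancels and
\[
Q^{i,s}_{uv}-R^{i}_{uu}=\frac{\sum_{k=d_s+v}^{n-1}\overline{a_k}+\mathfrak{e}\overline{a_n}}{s+1-i},
\]
which is an integer. Here the hypothesis $d_s\ne n$ is exactly what makes $Q^{i,s}$ available (its column range $1\le v\le n-d_s$ is nonempty), and it also guarantees that the remaining sum runs over indices $k$ with $d_s<k<n$, i.e. over the last gap. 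By the first part (case $i=s+1$) each such $\overline{a_k}$ is divisible by $\mathrm{lcm}(1,\dots,s)$, hence by $s+1-i$, so that sum contributes an integer and we conclude $(s+1-i)\mid\mathfrak{e}\overline{a_n}$. Letting $i$ run over $1,\dots,s$ sweeps out all of $\{1,\dots,s\}$ and yields $\mathrm{lcm}(1,\dots,s)\mid\mathfrak{e}\overline{a_n}$.

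I expect the main obstacle to be purely combinatorial: verifying that the denominators produced by the $P$, $Q$ and $R$ differences tile the interval $\{1,\dots,2s-i+1\}$ leaving no integer uncovered, where the role of the $R$-family in plugging the single middle value is easy to overlook. Once the interval is covered the divisibility statements are immediate, and the $\mathfrak{e}\overline{a_n}$ case follows as a short consequence of the gap-index case via the cancellation above.
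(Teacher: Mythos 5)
Your proposal is correct and follows essentially the same route as the paper: the same telescoping differences of $P^{ij}$, $Q^{ij}$ and $R^{i}$ entries produce the four abutting blocks of denominators $\{1,\dots,s-i+1\}$, $\{s+1-i,\dots,2s+1-2i\}$, $\{2(s+1-i)\}$, $\{2s+3-2i,\dots,2s+1-i\}$, and the $\mathfrak{e}\overline{a_n}$ statement comes from cancelling the common sum between $Q^{is}$ and the diagonal of $R^{i}$. Your only deviations are cosmetic: you use $P^{a,s}$ column differences for the last gap where the paper uses $Q^{i,s}$, and you take $Q^{is}_{uv}-R^{i}_{uu}$ at general $v$ (invoking the first part to absorb the leftover sum) where the paper simply evaluates at $v=n-d_s$, making that sum empty.
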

\begin{proof}
If $E_{\lambda}$ is an Ulrich bundle, then due to Theorem \ref{FN}, all entries $P^{ij}_{uv}$, $Q^{ij}_{uv}$ and $R^{i}_{uv}$ should be integers and so their differences are also integers. Fix an integer $i~(1\le i\le s)$ and let $j$ be an integer between $i$ and $s$, then we have
$$P^{ij}_{u+1,1}-P^{ij}_{u,1}=\frac{\overline{a_{d_i-u}}}{j-i+1}\in\mathbb{Z}~\text{and}~Q^{ij}_{u,1}-Q^{ij}_{u+1,1}=\frac{\overline{a_{d_{i-1}+u}}}{2s+1-(i+j)}\in\mathbb{Z}$$
for any $1\le u\le d_i-d_{i-1}-1$. Thus we obtain that 
$$j-i+1\mid\overline{a_{d_{i-1}+u}}~\text{and}~2s+1-(i+j)\mid\overline{a_{d_{i-1}+u}}.$$
Since $j$ runs through all integers between $i$ and $s$, for any $1\le u\le d_i-d_{i-1}-1$, we have
\begin{align}\label{lcm1}
lcm(1,2,\ldots,s-i)\mid\overline{a_{d_{i-1}+u}}~\text{and}~lcm(s-i+1,s-i+2,\ldots,2s-2i+1)\mid\overline{a_{d_{i-1}+u}}.
\end{align}
Moreover, we note that 
\[
R^{i}_{u,d_i-d_{i-1}}-R^{i}_{u+1,d_i-d_{i-1}}=\frac{\overline{a_{d_{i-1}+u}}}{2(s-i+1)}\in\mathbb{Z},
\]
hence we have 
\begin{align}\label{lcm2}
	2(s-i+1)\mid\overline{a_{d_{i-1}+u}}.
\end{align}	
Furthermore, if $i\ge 2$, let $i'$ be an integer between $1$ and $i-1$, then we have
\[
Q^{i',i-1}_{1,u}-Q^{i',i-1}_{1,u+1}=\frac{\overline{a_{d_{i-1}+u}}}{2s+1-(i'+i-1)}\in\mathbb{Z}
\]
for any $1\le u\le d_i-d_{i-1}-1$. It follows that $2s+1-(i'+i-1)\mid\overline{a_{d_{i-1}+u}}$. Since $i'$ runs through all integer between $1$ and $i-1$, we get
$lcm(2s-2i+3,2s-2i+4,\ldots,2s-i+1)\mid\overline{a_{d_{i-1}+u}}$. This together with (\ref{lcm1}) and (\ref{lcm2}) gives us 
\begin{align}\label{lcm3}
	lcm(1,2,\ldots, 2s-i+1)\mid\overline{a_k}
\end{align}
for any integer $1\le i\le s$ and $d_{i-1}+1\le k\le d_{i}-1$. Similarly, since 
$$Q^{is}_{1,v}-Q^{is}_{1,v+1}=\frac{\overline{a_{d_s+v}}}{2s+1-(i+s)}\in\mathbb{Z},$$
we obtain that 
$s-i+1\mid\overline{a_{d_s+v}}$ for any integer $1\le v\le n-d_s-1$. Since $i$ can run through all integers between $1$ and $s$, we have
\begin{align}\label{lcm4}
	lcm(1,2,\ldots,s)\mid\overline{a_k}
\end{align}
for any integer $d_s+1\le k\le n-1$. By combining sequences (\ref{lcm3}) and (\ref{lcm4}), we can get the first statement of the lemma. 

Next, we analyze $a_n$. Notice that for any integer $1\le i\le s$, 
\begin{align*}
	Q^{is}_{1,n-d_s}-R^{i}_{11}&=\frac{\sum\limits_{k=d_{i-1}+1}^{n-1}\overline{a_k}+2\mathfrak{e}\overline{a_n}}{2s+1-(i+s)}-\frac{2\sum\limits_{k=d_{i-1}+1}^{n-1}\overline{a_k}+2\mathfrak{e}\overline{a_n}}{2(s+1-i)}\\
	&=\frac{\mathfrak{e}\overline{a_n}}{s+1-i}.
\end{align*} 
Since $Q^{is}_{1,n-d_s}$ and $R^{i}_{11}$ are all integers, their difference is also an integer and hence $(s+1-i)\mid\mathfrak{e}\overline{a_n}$. Since $i$ can run through all integers between $1$ and $s$, we have
\[
lcm(1,2,\ldots,s)\mid\mathfrak{e}\overline{a_n}.
\] 
\end{proof}

\begin{lem}\label{lemma2}
With the notations as above, if $E_{\lambda}$ is an Ulrich bundle, then $\overline{a_{d_1}},\overline{a_{d_2}},\ldots,\overline{a_{d_s}}$ are $s$ different odd numbers.
\end{lem}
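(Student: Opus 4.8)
The plan is to prove the two assertions of the statement separately: first that $\overline{a_{d_1}},\ldots,\overline{a_{d_s}}$ are pairwise distinct, and then that each of them is odd. For the distinctness, I would begin by observing that each $\overline{a_{d_i}}$ literally occurs as an entry of $T_X^\lambda$: setting $i=j$ and $u=v=1$ in the formula for $P^{ij}_{uv}$, the denominator is $j-i+1=1$ and the summation collapses to the single index $k=d_i$, so $P^{ii}_{11}=\overline{a_{d_i}}$. Since $E_\lambda$ is Ulrich, Theorem \ref{FN} forces $T_X^\lambda=\{1,\ldots,\dim X\}$, so $\varphi^\lambda$ is injective and all entries of $T_X^\lambda$ are distinct; in particular $P^{11}_{11},\ldots,P^{ss}_{11}$, that is $\overline{a_{d_1}},\ldots,\overline{a_{d_s}}$, are distinct positive integers.

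For the oddness, the starting point is Lemma \ref{lemma1}. Because $s\geq 2$, each of $\mathrm{lcm}(1,\ldots,2s-i+1)$ and $\mathrm{lcm}(1,\ldots,s)$ is even, so every interior $\overline{a_k}$ (those with $d_{i-1}+1\leq k\leq d_i-1$ or $d_s+1\leq k\leq n-1$) as well as $\mathfrak{e}\overline{a_n}$ is even. Consequently, modulo $2$, every sum of $\overline{a_k}$'s is governed entirely by the boundary values $\overline{a_{d_1}},\ldots,\overline{a_{d_s}}$. I would then exploit the entry $P^{i,i+1}_{11}=\frac{1}{2}\sum_{k=d_i}^{d_{i+1}}\overline{a_k}$ for $1\leq i\leq s-1$, whose numerator equals $\overline{a_{d_i}}+\overline{a_{d_{i+1}}}$ plus even interior terms: its integrality forces $\overline{a_{d_i}}\equiv\overline{a_{d_{i+1}}}\pmod 2$. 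Hence all the $\overline{a_{d_i}}$ share a common parity, and it remains only to exclude the case in which they are all even.

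To rule out the all-even case, I would argue by contradiction. If every $\overline{a_{d_i}}$ is even, then every $\overline{a_k}$ with $k\leq n-1$ together with $\mathfrak{e}\overline{a_n}$ is even, whereas Theorem \ref{FN} requires $1\in T_X^\lambda$. I would show that no entry can equal $1$: for a $P$-entry the numerator is a sum of at least $j-i+1$ terms each $\geq 1$, with at least one boundary term $\geq 2$, so it strictly exceeds the denominator $j-i+1$; for a $Q$- or $R$-entry one bounds the numerator below using $\mathfrak{e}\overline{a_n}\geq\mathrm{lcm}(1,\ldots,s)\geq s$ together with a guaranteed boundary summand $\overline{a_{d_i}}\geq 2$, which makes the numerator strictly larger than the corresponding denominator $2s+1-(i+j)$ or $2(s+1-i)$. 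In every case the value is $>1$, contradicting $1\in T_X^\lambda$; therefore the $\overline{a_{d_i}}$ cannot all be even and must all be odd.

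The step I expect to be the main obstacle is this last exclusion of the all-even case. Parity considerations alone only yield that the $\overline{a_{d_i}}$ are congruent modulo $2$, so ruling out the even alternative genuinely requires invoking the full completeness condition $T_X^\lambda=\{1,\ldots,\dim X\}$ rather than mere integrality, together with a careful lower bound on the numerators of the $Q$- and $R$-entries, where the denominators may grow with $s$.
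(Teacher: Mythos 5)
Your proof is correct, and its first two steps — distinctness via the entries $P^{ii}_{11}=\overline{a_{d_i}}$ of $T_X^\lambda$, and parity propagation via the integrality of $P^{i,i+1}_{11}$ combined with the evenness of the interior $\overline{a_k}$ from Lemma \ref{lemma1} — coincide with the paper's argument. Where you genuinely diverge is in pinning down the parity as odd. The paper imports an external fact, namely \cite[Remark 3.4]{Fang}, which says that $\min_{1\le i\le s}\{\overline{a_{d_i}}\}$ is the \emph{smallest} entry of $T_X^\lambda$; completeness then forces this minimum to equal $1$, an odd number, and the shared parity does the rest. You instead rule out the all-even alternative by contradiction: if every $\overline{a_{d_i}}$ were even, then every summand appearing in a numerator (including $2\mathfrak{e}\overline{a_n}\ge 2\,lcm(1,\ldots,s)\ge 2s$ by Lemma \ref{lemma1}) is at least $2$, and your lower bounds — at least $j-i+1$ terms with one $\ge 2$ for a $P$-entry, and numerator exceeding $2s+1-(i+j)$ resp.\ $2(s+1-i)$ for $Q$- and $R$-entries — show no entry can equal $1$, contradicting $1\in T_X^\lambda$. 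Your estimates check out: e.g.\ the $Q$-numerator is at least $2\mathfrak{e}\overline{a_n}+\overline{a_{d_i}}\ge 2s+2>2s-1$, and the $R$-numerator at least $2s+4>2s$. The trade-off is clear: the paper's route is shorter and moreover yields the sharper fact $\min_i\overline{a_{d_i}}=1$, which it reuses elsewhere (e.g.\ in the proof of Lemma \ref{lemma4} and in Step 2 of the main argument), whereas your version is self-contained, needing only the displayed formulas for $T_X^\lambda$ and Theorem \ref{FN}, at the cost of a few extra inequalities. One small point: your parity chain and evenness claims assume $s\ge 2$ (the entries $P^{i,i+1}_{11}$ exist only then, and $lcm(1,\ldots,s)$ is even only for $s\ge2$), while the lemma as stated also covers $s=1$; that case is immediate — the paper disposes of it in one sentence via $\overline{a_{d_1}}=1$, and your all-even exclusion also survives there since the $Q$- and $R$-numerators still strictly exceed their denominators without using evenness of $\mathfrak{e}\overline{a_n}$ — but you should say so explicitly.
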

\begin{proof}
		According to \cite[Remark 3.4]{Fang}, $\min_{1\le i\le s}\{\overline{a_{d_i}}\}$ is the smallest entry in the associated datum $T_X^{\lambda}$ of $E_{\lambda}$. Since $E_{\lambda}$ is an Ulrich bundle, $T_X^{\lambda}=\{1,2,\ldots, \dim X\}$  and hence $\min_{1\le i\le s}\{\overline{a_{d_i}}\}=1$. If $s=1$, then $\overline{a_{d_1}}=1$ is odd. If $s\ge 2$, then for any integer $1\le i\le s-1$,
	\[
	P^{i,i+1}_{min}=\frac{\sum\limits_{k=d_i}^{d_{i+1}}\overline{a_k}}{2}=\frac{\overline{a_{d_i}}+\overline{a_{d_{i+1}}}+\sum\limits_{k=d_i+1}^{d_{i+1}-1}\overline{a_k}}{2}\in\mathbb{Z},	
	\]
	because $P^{i,i+1}_{min}\in T_X^{\lambda}$. By Lemma \ref{lemma1}, we know that $\overline{a_k}$ is even for any $k\in [d_i+1,d_{i+1}-1]$. Hence $P^{i,i+1}_{min}$ is an integer is equivalent to say that $\overline{a_{d_i}}+\overline{a_{d_{i+1}}}$ is even. Therefore, all integers $\overline{a_{d_i}}~(1\le i\le s)$ have the same parity. Note that $\min_{1\le i\le s}\{\overline{a_{d_i}}\}=1$, which means that there is an integer $t$ such that $\overline{a_{d_t}}$ is equal to $1$. Since $\overline{a_{d_i}}$ have the same parity, all $\overline{a_{d_i}}~(1\le i\le s)$ are odd. In addition, since $\overline{a_{d_i}}=P^{ii}_{min}\in T_X^{\lambda}$ and all entries of $T_X^{\lambda}$ are distinct, these $\overline{a_{d_i}}~(1\le i\le s)$ are naturally different.
	\end{proof}	

In order to prove the main theorem, we first analyze the associated datum of an irreducible homogeneous bundle. Let 
$E_{\lambda}$ be such a bundle, then the associated datum of $E_{\lambda}$ consists of the matrices $P^{ij},~ Q^{ij}~(1\le i\le j\le s)$ and $R^{i}~(1\le i\le s)$. By calculation, we see that
\[
P^{ij}_{uv}=\frac{\sum\limits_{k=d_i-u+1}^{d_j+v-1}\overline{a_k}}{j-i+1}\ge \frac{\sum\limits_{t=i}^{j}\overline{a_{d_t}}}{j-i+1}.\]
Since $\overline{a_{d_1}},\overline{a_{d_2}},\ldots,\overline{a_{d_s}}$ are different odd numbers by Lemma \ref{lemma2}, we have  
\begin{align}\label{pij}
	P^{ij}_{uv}\ge\frac{1+3+\cdots+2(j-i)+1}{j-i+1}=j-i+1.
\end{align}
Similarly, we have
\[
Q^{ij}_{uv}=\frac{\sum\limits_{k=d_{i-1}+u}^{n-1}\overline{a_k}+\sum\limits_{k=d_j+v}^{n-1}\overline{a_k}+2\mathfrak{e}\overline{a_n}}{2s+1-(i+j)}\ge \frac{\sum\limits_{t=i}^{s}\overline{a_{d_t}}+\sum\limits_{t=j+1}^{s}\overline{a_{d_t}}+2\mathfrak{e}\overline{a_n}}{2s+1-(i+j)}.
\]
According to Lemmas \ref{lemma1} and \ref{lemma2}, we know $\mathfrak{e}\overline{a_n}\ge s(s-1)$ and $\overline{a_{d_1}},\overline{a_{d_2}},\ldots,\overline{a_{d_s}}$ are different odd numbers, hence 
$$Q^{ij}_{uv}\ge\frac{(s-i+1)^2+(s-j)^2+2s(s-1)}{2s+1-(i+j)}=\frac{2(s-j)^2+2s(s-1)}{2s+1-(i+j)}+j-i+1.$$
 We can further estimate the value of $Q^{ij}_{uv}$ in two cases.\\
If $j>i$, then 
\begin{align}\label{j>i}
Q^{ij}_{uv}\ge\frac{2(s-j)^2+2s(s-1)}{2s+1-(i+j)}+j-i+1\ge \frac{2s(s-1)}{2s+1-(1+2)}+2=s+2.
\end{align}
If $j=i$, then 
\begin{align}\label{j=i}
Q^{ii}_{uv}\ge\frac{2(s-i)^2+2s(s-1)}{2s+1-2i}+1\ge \frac{2s(s-1)}{2s-1}+1>s.
\end{align}
Finally, in a similar way, we note that
\begin{align}\label{R}
\begin{split}
R^{i}_{uv}&=\frac{\sum\limits_{k=d_{i-1}+u}^{n-1}\overline{a_k}+\sum\limits_{k=d_{i-1}+v}^{n-1}\overline{a_k}+2\mathfrak{e}\overline{a_n}}{2(s+1-i)}\ge\frac{2\sum\limits_{t=i}^{s}\overline{a_{d_t}}+2\mathfrak{e}\overline{a_n}}{2(s+1-i)}\\
&\ge\frac{2(s+1-i)^2+2s(s-1)}{2(s+1-i)}=\frac{s(s-1)}{s+1-i}+s+1-i\\
&\ge 2\sqrt{s(s-1)}.
\end{split}
\end{align}\\
\textbf{Proof of Theorem \ref{mainthm} for type $B_n$ or $C_n$ and $s\ge 3$, $d_s\ne n$:} Suppose there is an irreducible homogeneous Ulrich bundle $E_{\lambda}$ on $X$. Denote $T_X^{\lambda}=\{P^{ij}, Q^{ij}, R^{i}\}$ by the associated datum of $E_{\lambda}$. \\

\textbf{Step 1:} We claim that every $Q^{ij}_{uv}>4$ and $R^{i}_{uv}>4$.

 If $j>i$, then $Q^{ij}_{uv}\ge 5$ by the sequence (\ref{j>i}) and if $j=i$ and $s\ge 4$, then $Q^{ij}_{uv}>4$ by (\ref{j=i}). For the case $s=3$ and $j=i$, by (\ref{j=i}) we know $Q^{ii}_{uv}\ge\frac{2(3-i)^2+12}{7-2i}+1$. It is easy to see that the latter is always greater than $4$ when $1\le i\le 3$. Hence we conclude $Q^{ij}_{uv}>4$. At the same time, based on the sequence (\ref{R}), we can easily conclude that every $R^{i}_{uv}$ is greater than $4$. Therefore, $2$ and $4$ can only be entries of the matrices $P^{ij}$. \\

\textbf{Step 2:} Determine the probability of $2$ in the matrices $P^{ij}$.

By Lemmas \ref{lemma1} and \ref{lemma2}, we know $\overline{a_{d_i}}$ is odd for any $1\le i\le s$ and $\overline{a_k}$ is even for any $k$ except $d_1,\ldots,d_s$. Hence when $j-i$ is even, the sum $\sum_{k=d_i-u+1}^{d_j+v-1}\overline{a_k}$ is odd for any $1\le u\le d_i-d_{i-1}$ and $1\le v\le d_{j+1}-d_j$. Since $E_{\lambda}$ is an Ulrich bundle, every entry $P^{ij}_{uv}$ is an integer. Notice that the numerator of $P^{ij}_{uv}$ is $\sum_{k=d_i-u+1}^{d_j+v-1}\overline{a_k}$,which is odd when $j-i$ is even. Thus every $P^{ij}_{uv}$ is also odd when $j-i$ is even. It follows that $2$ and $4$ can only appear in some matrices $P^{ij}$, where $j-i$ is odd. On the other hand, by (\ref{pij}), we have $P^{ij}_{uv}\ge j-i+1>2$ for any $j>i+1$. Therefore $2$ can only be the smallest entry of $P^{t,t+1}$ for some integer $t~(1\le t\le s-1)$, i.e.,

\begin{align}\label{2}
2=P^{t,t+1}_{min}=P^{t,t+1}_{11}=\frac{\sum\limits_{k=d_t}^{d_{t+1}}\overline{a_k}}{2}.
\end{align}
  Since for any $d_t+1\le k\le d_{t+1}-1$, $\overline{a_k}\ge 2$ by Lemma \ref{lemma1}, we conclude that $d_{t+1}=d_t+1$. And in this case $2=\frac{\overline{a_{d_t}}+\overline{a_{d_{t+1}}}}{2}$, which means that either $\overline{a_{d_t}}=1$ and $\overline{a_{d_{t+1}}}=3$ or $\overline{a_{d_t}}=3$ and $\overline{a_{d_{t+1}}}=1$. Similarly, from the sequence (\ref{pij}) and the parity of $P^{ij}_{uv}$, we conclude that 

 \begin{align}\label{notin}
 4\notin P^{ij},~\text{if}~j-i\ge 4~\text{or}~j-i~\text{is even}.
 \end{align} 

In order to prove the nonexistence of irreducible homogeneous Ulrich bundles on $X$, in the next step we try to judge that $4$ does not appear in the associated datum $T_X^{\lambda}$ of $E_{\lambda}$.\\

\textbf{Step 3:} We prove $4$ does not appear in $T_X^{\lambda}$.

From the above arguments, we know that there exists an integer $t~(1\le t\le s-1)$ such that $d_{t+1}=d_t+1$ and $(\overline{a_{d_t}},\overline{a_{d_{t+1}}})=(1,3)$ or $(3,1)$. Hence $\overline{a_{d_i}}\ge 5$ for any $i\neq t,t+1$ by Lemma \ref{lemma2}. It follows that if $j\le t-1$ or $i\ge t+2$, then 
\[
P^{ij}_{uv}\ge \frac{\sum\limits_{l=i}^{j}\overline{a_{d_l}}}{j-i+1}\ge\frac{5+7+\cdots+2(j-i)+5}{j-i+1}=5+j-i\ge5.
\] 
And if $(i,j)=(t-3,t)$ or $(i,j)=(t+1,t+4)$, then
\begin{align*}
P^{ij}_{uv}&\ge \frac{\sum\limits_{l=i}^{j}\overline{a_{d_l}}}{j-i+1}\ge\frac{1+5+7+\cdots+2(j-i)+3}{j-i+1}=\frac{1+4(j-i)+(j-i)^2}{j-i+1}\\
&>\frac{1+2(j-i)+(j-i)^2}{j-i+1}=j-i+1=4.
\end{align*}
Combining the above analysis with the statement (\ref{notin}), we see that $4$ can only appear in the matrix $P^{t,t+1}$, $P^{t+1,t+2}$, $P^{t-1,t}$, $P^{t,t+3}$, $P^{t-1,t+2}$ or $P^{t-2,t+1}$. Next, we prove the nonexistence of $4$ for the following six cases.\\

Case (1): Suppose $4\in P^{t,t+1}$.

Note that $P^{t,t+1}_{11}=2$ (see (\ref{2})). Hence $4$ would be equal to $P^{t,t+1}_{12}$ or $P^{t,t+1}_{21}$. By calculation, we find \[
P^{t,t+1}_{12}=2+\frac{\overline{a_{d_{t+1}+1}}}{2}~\text{and}~ P^{t,t+1}_{21}=2+\frac{\overline{a_{d_t-1}}}{2}.
\]
Due to Lemma \ref{lemma1}, we know
$lcm(1,2,\ldots,2s-t-1)\mid\overline{a_{d_{t+1}+1}}$ and $lcm(1,2,\ldots,2s-t+1)\mid\overline{a_{d_t-1}}$. Since $s\ge 3$, $\overline{a_{d_{t+1}+1}}$ and $\overline{a_{d_t-1}}$ are greater than $4$, hence $4$ does not appear in the matrix $P^{t,t+1}$, contrary to hypothesis. \\

Case (2): Suppose $4\in P^{t+1,t+2}$.

In this case, $4$ would be the smallest entry in $P^{t+1,t+2}$, i.e. $$4=P^{t+1,t+2}_{min}=\frac{\sum\limits_{k=d_{t+1}}^{d_{t+2}}\overline{a_k}}{2}.$$ Thus $\sum_{k=d_{t+1}}^{d_{t+2}}\overline{a_k}=8$ and
$P^{t,t+2}_{min}=\frac{8+\overline{a_{d_t}}}{3}$.
Since $E_{\lambda}$ is an Ulrich bundle, $P^{t,t+2}_{min}\in\mathbb{Z}$, which forces that $\overline{a_{d_t}}=1$ and hence $\overline{a_{d_{t+1}}}=3$. Then there would be two identical entries $P^{t,t+2}_{min}$ and $\overline{a_{d_{t+1}}}$ in the associated datum $T_X^{\lambda}$, which contradicts $E_{\lambda}$ being Ulrich. \\

Case (3): Suppose $4\in P^{t-1,t}$.

The proof for this case is similar to Case (2). Using $4=P^{t-1,t}_{min}$ and $P^{t-1,t+1}_{min}\in\mathbb{Z}$, we get 
 $\overline{a_{d_t}}=3$ and $\overline{a_{d_{t+1}}}=1$. It follows that $P^{t-1,t+1}_{min}=3=\overline{a_{d_t}}$, contrary to the hypothesis that $E_{\lambda}$ is an Ulrich bundle.\\
 
 Case (4): Suppose $4\in P^{t,t+3}$.
 
 In this case, $4$ would be the smallest entry in $P^{t,t+3}$, which means that 
 \[
 4=P^{t,t+3}_{min}=\frac{\sum\limits_{k=d_t}^{d_{t+3}}\overline{a_k}}{4}=\frac{4+\sum\limits_{k=d_{t+1}+1}^{d_{t+3}}\overline{a_k}}{4}.\] 
 Since $\overline{a_{d_{t+2}}}$ and $\overline{a_{d_{t+3}}}$ are greater than $5$ and they are different odd numbers, the above equation implies that $d_{t+2}=d_{t+1}+1$, $d_{t+3}=d_{t+2}+1$ and $(\overline{a_{d_{t+2}}}, \overline{a_{d_{t+3}}})=(5,7)$ or $(7,5)$. Since $E_{\lambda}$ is an Ulrich bundle,  $P^{t,t+2}_{min}=\frac{4+\overline{a_{d_{t+2}}}}{3}\in\mathbb{Z}$, which implies that $\overline{a_{d_{t+2}}}=5$. Hence $P^{t,t+2}_{min}=3$, which coincides with $\overline{a_{d_t}}$ or $\overline{a_{d_{t+1}}}$, contrary to the hypothesis that $E_{\lambda}$ is an Ulrich bundle.	\\
 
  Case (5): Suppose $4\in P^{t-1,t+2}$.
 
 The proof for this case is similar to Case (4). Using $4=P^{t-1,t+2}_{min}$, we can get $d_t=d_{t-1}+1$ , $d_{t+2}=d_{t+1}+1$ and $(\overline{a_{d_{t-1}}}, \overline{a_{d_{t+2}}})=(5,7)$ or $(7,5)$. Since $P^{t-1,t+1}_{min}=\frac{4+\overline{a_{d_{t-1}}}}{3}\in\mathbb{Z}$, we have $\overline{a_{d_{t-1}}}=5$ and hence $P^{t-1,t+1}_{min}=3$. This leads to a contradiction, because $P^{t-1,t+1}$ coincides with $\overline{a_{d_t}}$ or $\overline{a_{d_{t+1}}}$.	\\
 
 Case (6): Suppose $4\in P^{t-2,t+1}$.
 
 The proof for this case is similar to Case (5). Using $4=P^{t-2,t+1}_{min}$ and  $P^{t-1,t+1}_{min}=\frac{4+\overline{a_{d_{t-1}}}}{3}\in\mathbb{Z}$, we have $\overline{a_{d_{t-1}}}=5$ and hence $P^{t-1,t+1}_{min}=3$, which leads to a contradiction.
 
In conclusion, through detailed analysis of six different cases, we have excluded one by one the possibility of $4$ appearing in the associated datum $T_X^{\lambda}$. 
  Therefore, if $s\ge 3$, there are no irreducible homogeneous Ulrich bundles on $X$.\\

Next, we show that the same statement holds for any isotropic flag variety $B_n/P_{d_1,d_2}$ or $C_n/P_{d_1,d_2}~(d_2\ne n)$, whose Picard number is $2$.\\

\textbf{Proof of Theorem \ref{mainthm} for type $B_n$ or $C_n$ and $s=2$, $d_s\ne n$:}		
 Suppose that $E_{\lambda}$ is an	irreducible homogeneous Ulrich bundle on $X:=B_n/P_{d_1,d_2}$ or $C_n/P_{d_1,d_2}~(d_2\ne n)$. Denote $T_X^{\lambda}=\{P^{ij}, Q^{ij}, R^{i}\}$ by the associated datum of $E_{\lambda}$. According to Theorem \ref{FN},  $T_X^{\lambda}=\{1,2,\ldots, \dim X\}$. Hence $2$ would appear in $T_X^{\lambda}$. By the hypothesis $s=2$, the sequence (\ref{j>i}) and (\ref{j=i}), we get every $Q^{ij}_{uv}$ is always greater than $2$. And by the sequence (\ref{R}), we get $R^{i}_{uv}$ is also greater than $2$. Hence $2$ could only be an entry of some matrix $P^{ij}$, where $1\le i\le j\le 2$.

As every entry of the matrix $P^{11}$ or $P^{22}$ is odd, $2$ appears as an entry of $P^{12}$. By the sequence (\ref{pij}), we get $P^{12}_{uv}$ is at least $2$ and hence $2$ should be the smallest entry in the matrix $P^{12}$. It follows that $2=P^{12}_{min}=\frac{\sum_{k=d_1}^{d_2}\overline{a_k}}{2}$. Since for any $d_1+1\le k\le d_2-1$, $\overline{a_k}\ge 2$ by Lemma \ref{lemma1}, we conclude that 
\begin{align}\label{d_2=d_1+1}
d_2=d_1+1~ \text{and}~(\overline{a_{d_1}}, \overline{a_{d_2}})=(1,3) ~ \text{or}~ (3,1).
\end{align}

On the other hand, by Lemmas \ref{lemma1} and \ref{lemma2}, it is not hard to see all entries in the matrices $P^{11}$, $P^{22}$, $Q^{11}$ and $Q^{22}$ are odd. In addition, since $d_2-d_1=1$, the matrix $R^2$ has only one entry $R^2_{11}=\sum_{k=d_2}^{n-1}\overline{a_k}+\mathfrak{e}\overline{a_n}$, which is odd due to Lemmas \ref{lemma1} and \ref{lemma2}. Thus even numbers can only appear in the matrix $P^{12}$, $Q^{12}$ or $R^1$. According to \cite[Page 10 (3.7)]{Fang}, we know $\dim X=\max\{m_1,m_2\}$, where  $m_1:=\sum_{k=1}^{d_1}\overline{a_k}$ and $m_2:=\sum_{k=d_2}^{n-1}\overline{a_k}+\sum_{k=d_2+1}^{n-1}\overline{a_k}+2\mathfrak{e}\overline{a_n}$. Since $m_1$ and $m_2$ are odd, $\dim X$ is odd. Hence $\dim X-1$ is even and is the largest entry among the matrices $P^{12}$, $Q^{12}$ and $R^1$. By comparing the largest entries of the three matrices, we find 
\[
Q^{12}_{max}=\frac{\sum\limits_{k=1}^{n-1}\overline{a_k}+\sum\limits_{k=d_2+1}^{n-1}\overline{a_k}+2\mathfrak{e}\overline{a_n}}{2}>R^{1}_{max}=\frac{\sum\limits_{k=1}^{n-1}\overline{a_k}}{2}+\frac{\mathfrak{e}\overline{a_n}}{2}
>P^{12}_{max}=\frac{\sum\limits_{k=1}^{n-1}\overline{a_k}}{2}.
\]
Therefore, $\dim X-1=Q^{12}_{max}$. It follows that
\[
R^1_{1,d_1}=\frac{\sum\limits_{k=1}^{n-1}\overline{a_k}+\sum\limits_{k=d_1}^{n-1}\overline{a_k}+2\mathfrak{e}\overline{a_n}}{4}=\frac{2\dim X-2+\overline{a_{d_1}}+\overline{a_{d_2}}}{4}=\frac{\dim X+1}{2}.
\]
As $\dim X$ is the maximum of $m_1$ and $m_2$ and $\dim X-1$ is their average, we have 
\begin{align*}
(m_1,m_2)=(\dim X, \dim X-2)~\text{or}~(\dim X-2,\dim X).
\end{align*} 
Combing the above sequence with (\ref{d_2=d_1+1}), it is not hard to see that either $m_2+\overline{a_{d_1}}$ or $m_2+\overline{a_{d_2}}$ must be equal to $\dim X+1$. If $m_2+\overline{a_{d_1}}=\dim X+1$, then 

\[
Q^{12}_{d_1,1}=\frac{\sum\limits_{k=d_1}^{n-1}\overline{a_k}+\sum\limits_{k=d_2+1}^{n-1}\overline{a_k}+2\mathfrak{e}\overline{a_n}}{2}=\frac{m_2+\overline{a_{d_1}}}{2}=\frac{\dim X+1}{2}=R^1_{1,d_1}.
\]
If $m_2+\overline{a_{d_2}}=\dim X+1$, then
\[
R^2_{11}=\sum_{k=d_2}^{n-1}\overline{a_k}+\mathfrak{e}\overline{a_n}=\frac{m_2+\overline{a_{d_2}}}{2}=\frac{\dim X+1}{2}=R^1_{1,d_1}.
\]
Therefore, in any case, there would always be two identical numbers in $T_X^{\lambda}$, which is contradictory to $E_{\lambda}$ being Ulrich. Therefore, we are done.

\subsubsection{$J=\{d_1,\ldots,d_s\}$ with $d_s=n$}
 According to \cite[Section 3.2.1 II Case (b)]{Fang}, the associated datum $T_X^{\lambda}$ of $E_{\lambda}$ is of the following form.\\

$T_X^{\lambda}=\{\tilde{P}^{ij}_{uv}, \tilde{Q}^{ij}_{uv}~(1\le i\le j\le s-1),\tilde{R}^{i}_{uv}~(1\le i\le s)\}$, where
\begin{align*}&\tilde{P}^{ij}_{uv}=\frac{\sum\limits_{k=d_i-u+1}^{d_j+v-1}\overline{a_k}}{j-i+1}~(1\le u\le d_i-d_{i-1},1\le v\le d_{j+1}-d_j);\\&\tilde{Q}^{ij}_{uv}=\frac{\sum\limits_{k=d_{i-1}+u}^{n-1}\overline{a_k}+\sum\limits_{k=d_j+v}^{n-1}\overline{a_k}+2\mathfrak{e}\overline{a_n}}{2(s+\mathfrak{e})-(i+j+1)}~(1\le u\le d_i-d_{i-1},1\le v\le d_{j+1}-d_j);\\&\tilde{R}^{i}_{uv}=\frac{\sum\limits_{k=d_{i-1}+u}^{n-1}\overline{a_k}+\sum\limits_{k=d_{i-1}+v}^{n-1}\overline{a_k}+2\mathfrak{e}\overline{a_n}}{2(s+\mathfrak{e}-i)}~(1\le u\le v\le d_i-d_{i-1}).  
\end{align*}
It should be noted that the matrices $\tilde{P}^{ij}$ and $\tilde{Q}^{ij}$ can only happen when $s\ge 2$.

By observing, it is not difficult to see that 
\[
\tilde{R}^{i}_{min}=\frac{2\sum\limits_{k=d_i}^{n-1}\overline{a_k}+2\mathfrak{e}\overline{a_n}}{2(s+\mathfrak{e}-i)}~\text{and}~\tilde{Q}^{ij}_{min}=\frac{\sum\limits_{k=d_i}^{n-1}\overline{a_k}+\sum\limits_{k=d_{j+1}}^{n-1}\overline{a_k}+2\mathfrak{e}\overline{a_n}}{2(s+\mathfrak{e})-(i+j+1)}.
\]
By calculation, we have
\begin{align*}
\tilde{Q}^{ij}_{min}&=\frac{\sum\limits_{k=d_i}^{n-1}\overline{a_k}+\sum\limits_{k=d_{j+1}}^{n-1}\overline{a_k}+2\mathfrak{e}\overline{a_n}}{2(s+\mathfrak{e})-(i+j+1)}=\frac{2\sum\limits_{k=d_i}^{n-1}\overline{a_k}+2\mathfrak{e}\overline{a_n}+2\sum\limits_{k=d_{j+1}}^{n-1}\overline{a_k}+2\mathfrak{e}\overline{a_n}}{4(s+\mathfrak{e})-2(i+j+1)}\\
&=\frac{2(s+\mathfrak{e}-i)\tilde{R}^{i}_{min}+2(s+\mathfrak{e}-j-1)\tilde{R}^{j+1}_{min}}{4(s+\mathfrak{e})-2(i+j+1)}.
\end{align*}
Hence, we conclude that
\begin{align}\label{min}
\tilde{R}^{i}_{min}\le\tilde{Q}^{ij}_{min}\le\tilde{R}^{j+1}_{min}~\text{or}~\tilde{R}^{j+1}_{min}\le\tilde{Q}^{ij}_{min}\le\tilde{R}^{i}_{min}
\end{align}
for any $1\le i\le j\le s-1$.

Similar to the previous section, we can obtain the following two lemmas, which are crucial for the proof of our main theorem.
\begin{lem}\label{lemma3}
	Let $E_{\lambda}$ be an irreducible homogeneous bundle on $X$ with highest weight $\lambda=a_1\lambda_1+\cdots+a_n\lambda_n$. If $E_{\lambda}$ is an Ulrich bundle, then for any integer $1\le i\le s$ and $d_{i-1}+1\le k\le d_{i}-1$, we have
	$$lcm(1,2,\ldots,s-i,s-i+2\mathfrak{e},s-i+2\mathfrak{e}+1,\ldots,  2s+2\mathfrak{e}-1-i)\mid\overline{a_k}.$$
\end{lem}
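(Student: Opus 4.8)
The plan is to reproduce the argument of Lemma \ref{lemma1}, adapting it to the modified denominators that occur when $d_s=n$. The whole argument rests on Theorem \ref{FN}: if $E_\lambda$ is Ulrich then every entry of $\tilde{P}^{ij}$, $\tilde{Q}^{ij}$ and $\tilde{R}^{i}$ is an integer, hence so is the difference of any two of them. The strategy is to choose pairs of neighbouring entries whose difference telescopes to a single $\overline{a_k}$ divided by the relevant denominator; each such difference yields a divisibility $d\mid\overline{a_k}$, and the lemma follows by taking the least common multiple of all denominators $d$ that arise as $i,j$ (and an auxiliary index $i'$) vary. Fix $i$ with $1\le i\le s$ and write $k=d_{i-1}+u$ with $1\le u\le d_i-d_{i-1}-1$; if $d_i-d_{i-1}=1$ there is no such $k$ and the statement is vacuous, so I may assume $d_i-d_{i-1}\ge 2$.

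First I would extract the small denominators from $\tilde{P}$: for $i\le j\le s-1$ one has $\tilde{P}^{ij}_{u+1,1}-\tilde{P}^{ij}_{u,1}=\overline{a_{d_i-u}}/(j-i+1)\in\mathbb{Z}$, so letting $j$ run from $i$ to $s-1$ gives $lcm(1,2,\ldots,s-i)\mid\overline{a_k}$; note that, unlike in Lemma \ref{lemma1}, $j$ stops at $s-1$ because the matrices $\tilde{P}^{ij}$ exist only for $j\le s-1$. Next, the row differences of $\tilde{Q}$ give $\tilde{Q}^{ij}_{u,1}-\tilde{Q}^{ij}_{u+1,1}=\overline{a_k}/(2(s+\mathfrak{e})-(i+j+1))$, so as $j$ runs from $i$ to $s-1$ the denominator $2s+2\mathfrak{e}-i-j-1$ sweeps the consecutive integers from $s-i+2\mathfrak{e}$ up to $2s+2\mathfrak{e}-2i-1$. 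When $i\ge 2$, the cross-family column differences $\tilde{Q}^{i',i-1}_{1,v}-\tilde{Q}^{i',i-1}_{1,v+1}=\overline{a_{d_{i-1}+v}}/(2(s+\mathfrak{e})-(i'+i))$, taken for $1\le i'\le i-1$, contribute the denominators $2s+2\mathfrak{e}-i'-i$, i.e.\ the consecutive integers from $2s+2\mathfrak{e}-2i+1$ up to $2s+2\mathfrak{e}-i-1$. Finally, $\tilde{R}^{i}_{u,d_i-d_{i-1}}-\tilde{R}^{i}_{u+1,d_i-d_{i-1}}=\overline{a_k}/(2(s+\mathfrak{e}-i))$ supplies the single denominator $2(s+\mathfrak{e}-i)=2s+2\mathfrak{e}-2i$.

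The key point is that this lone denominator $2s+2\mathfrak{e}-2i$ coming from $\tilde{R}$ sits exactly in the gap between the two blocks produced by $\tilde{Q}$, so that together the $\tilde{Q}$ and $\tilde{R}$ contributions tile the full consecutive range $s-i+2\mathfrak{e},\,s-i+2\mathfrak{e}+1,\ldots,2s+2\mathfrak{e}-1-i$ with no omission. Combining this with the block $1,2,\ldots,s-i$ from $\tilde{P}$ and taking least common multiples gives precisely
\[
lcm(1,2,\ldots,s-i,\,s-i+2\mathfrak{e},\,s-i+2\mathfrak{e}+1,\ldots,2s+2\mathfrak{e}-1-i)\mid\overline{a_k},
\]
which is the assertion. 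The deliberate jump from $s-i$ to $s-i+2\mathfrak{e}$ reflects that no denominator of intermediate size is forced here: for type $C_n$ (where $2\mathfrak{e}=2$) the value $s-i+1$ is genuinely skipped, whereas for type $B_n$ (where $2\mathfrak{e}=1$) the two ranges merge into the single block $1,\ldots,2s-i$.

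I expect the work to be bookkeeping rather than any single hard idea. One must check that the stated index ranges for the telescoping differences are legitimate, in particular that the column index $v$ of $\tilde{Q}^{i',i-1}$ runs up to $d_i-d_{i-1}$ so that every $\overline{a_k}$ with $d_{i-1}+1\le k\le d_i-1$ is reached, and one must handle the edge cases $i=1$ (where the cross-family $\tilde{Q}^{i',i-1}$ block is empty and $\tilde{R}$ supplies the top denominator $2s+2\mathfrak{e}-2$) and $i=s$ (where the $\tilde{P}$ block and the $\tilde{Q}$ row-difference block are both empty, and the range starts at $2\mathfrak{e}$). Keeping $\mathfrak{e}\in\{\tfrac12,1\}$ symbolic throughout, and confirming that the two $\tilde{Q}$ blocks together with the single $\tilde{R}$ value really do abut with neither overlap nor gap, is the only step where genuine care is required.
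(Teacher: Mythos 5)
Your proposal is correct and takes essentially the same route as the paper: the paper likewise telescopes the differences $\tilde{P}^{ij}_{u+1,1}-\tilde{P}^{ij}_{u,1}$, $\tilde{Q}^{ij}_{u,1}-\tilde{Q}^{ij}_{u+1,1}$ for $i\le j\le s-1$, $\tilde{R}^{i}_{u,d_i-d_{i-1}}-\tilde{R}^{i}_{u+1,d_i-d_{i-1}}$, and (for $i\ge 2$) $\tilde{Q}^{i',i-1}_{1,u}-\tilde{Q}^{i',i-1}_{1,u+1}$, producing exactly your three blocks of denominators with $2(s+\mathfrak{e}-i)$ from $\tilde{R}^{i}$ filling the gap between the two $\tilde{Q}$-blocks. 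The only cosmetic difference is that the paper treats the segment $d_{s-1}+1\le k\le n-1$ as a separate second step, using $\tilde{R}^{s}_{1,v}-\tilde{R}^{s}_{1,v+1}$ and $\tilde{Q}^{i,s-1}_{1,v}-\tilde{Q}^{i,s-1}_{1,v+1}$ to get $lcm(2\mathfrak{e},2\mathfrak{e}+1,\ldots,s+2\mathfrak{e}-1)$, which is precisely your $i=s$ edge case.
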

\begin{proof}
Let's follow the proof of Lemma \ref{lemma1}. Fix an integer $i~(1\le i\le s-1)$ and let $j$ be an integer between $i$ and $s-1$. Using the differences $\tilde{P}^{ij}_{u+1,1}-\tilde{P}^{ij}_{u,1}$ and $\tilde{Q}^{ij}_{u,1}-\tilde{Q}^{ij}_{u+1,1}$ are integers, we get
	$$j-i+1\mid\overline{a_{d_{i-1}+u}}~\text{and}~2(s+\mathfrak{e})-(i+j+1)\mid\overline{a_{d_{i-1}+u}}.$$
	Since $j$ runs through all integers between $i$ and $s-1$, for any $1\le u\le d_i-d_{i-1}-1$, we have
	\begin{align}\label{lcm5}
		lcm(1,2,\ldots,s-i)\mid\overline{a_{d_{i-1}+u}}~\text{and}~lcm(s-i+2\mathfrak{e},\ldots,2(s+\mathfrak{e}-i)-1)\mid\overline{a_{d_{i-1}+u}}.
	\end{align}
Moreover, since $
\tilde{R}^{i}_{u,d_i-d_{i-1}}-\tilde{R}^{i}_{u+1,d_i-d_{i-1}}\in\mathbb{Z}$, we have \begin{align}\label{lcm6}
	2(s+\mathfrak{e}-i)\mid\overline{a_{d_{i-1}+u}}.
\end{align}
	Furthermore, when $i\ge 2$, using the difference $\tilde{Q}^{i',i-1}_{1,u}-\tilde{Q}^{i',i-1}_{1,u+1}$ is an integer, where $1\le i'\le i-1$, we have
	 $2(s+\mathfrak{e})-(i'+i)\mid\overline{a_{d_{i-1}+u}}$. Since $i'$ runs through all integers between $1$ and $i-1$, we get $lcm(2(s+\mathfrak{e}-i)+1,\ldots, 2s+2\mathfrak{e}-1-i)\mid\overline{a_{d_{i-1}+u}}$. This together with (\ref{lcm5}) and (\ref{lcm6}) gives us 
	\begin{align}\label{lcm7}
		lcm(1,2,\ldots,s-i,s-i+2\mathfrak{e},s-i+2\mathfrak{e}+1,\ldots,  2s+2\mathfrak{e}-1-i)\mid\overline{a_k}
	\end{align}
	for any integer $1\le i\le s-1$ and $d_{i-1}+1\le k\le d_{i}-1$.
	
Similarly, using the differences $
\tilde{R}^{s}_{1,v}-\tilde{R}^{s}_{1,v+1}$ and $\tilde{Q}^{i,s-1}_{1,v}-\tilde{Q}^{i,s-1}_{1,v+1}$ are integers, we obtain that $2\mathfrak{e}\mid\overline{a_{d_{s-1}+v}}$ and
	$s+2\mathfrak{e}-i\mid\overline{a_{d_{s-1}+v}}$ for any integer $1\le v\le n-d_{s-1}-1$. Since $i$ can run through all integers between $1$ and $s-1$, we have
	\begin{align}\label{lcm8}
		lcm(2\mathfrak{e},2\mathfrak{e}+1,\ldots,s+2\mathfrak{e}-1)\mid\overline{a_k}
	\end{align}
	for any integer $d_{s-1}+1\le k\le n-1$. By combining sequences (\ref{lcm7}) and (\ref{lcm8}), we complete the proof.
\end{proof}

\begin{lem}\label{lemma4}
		With the notations as above, if $E_{\lambda}$ is an Ulrich bundle, then $\overline{a_{d_1}},\ldots,\overline{a_{d_{s-1}}},\overline{a_{d_s}}(=\overline{a_n})$ are $s$ different odd numbers.
\end{lem}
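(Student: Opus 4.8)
The plan is to imitate the proof of Lemma \ref{lemma2}, splitting the claim into three parts: that each of $\overline{a_{d_1}},\dots,\overline{a_{d_{s-1}}},\overline{a_n}$ occurs as the smallest entry of one of the matrices in $T_X^\lambda$, that these $s$ numbers all have the same parity, and that they are pairwise distinct. First I would observe that $\tilde{P}^{ii}_{min}=\overline{a_{d_i}}$ for $1\le i\le s-1$, since the numerator $\sum_{k=d_i-u+1}^{d_i+v-1}\overline{a_k}$ is smallest at $u=v=1$, and that $\tilde{R}^{s}_{min}=\overline{a_n}$, since at $u=v=d_s-d_{s-1}$ both sums are empty and the quotient reduces to $\tfrac{2\mathfrak{e}\overline{a_n}}{2\mathfrak{e}}$. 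By \cite[Remark 3.4]{Fang}, as in Lemma \ref{lemma2}, the global minimum of $T_X^\lambda$ equals $\min\{\overline{a_{d_1}},\dots,\overline{a_{d_{s-1}}},\overline{a_n}\}$, so the Ulrich hypothesis $T_X^\lambda=\{1,2,\dots,\dim X\}$ forces this minimum to be $1$.

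Next I would reproduce the parity chain of Lemma \ref{lemma2} among the first $s-1$ of these numbers. For each $1\le i\le s-2$ the entry $\tilde{P}^{i,i+1}_{min}=\tfrac12\sum_{k=d_i}^{d_{i+1}}\overline{a_k}$ belongs to $T_X^\lambda$ and is therefore an integer; since Lemma \ref{lemma3} shows that every interior coefficient $\overline{a_k}$ with $d_i<k<d_{i+1}$ is even, integrality is equivalent to $\overline{a_{d_i}}+\overline{a_{d_{i+1}}}$ being even. Hence $\overline{a_{d_1}}\equiv\overline{a_{d_2}}\equiv\cdots\equiv\overline{a_{d_{s-1}}}\pmod 2$.

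The delicate point, which I expect to be the main obstacle, is linking $\overline{a_{d_{s-1}}}$ to $\overline{a_n}$, and here the value of $\mathfrak{e}$ intervenes so that the two types must be treated separately. In type $B_n$ I would use $\tilde{Q}^{s-1,s-1}_{min}$: because $d_s=n$ the second sum in its numerator is empty and its denominator is $2\mathfrak{e}+1=2$, giving $\tilde{Q}^{s-1,s-1}_{min}=\tfrac12\big(\sum_{k=d_{s-1}}^{n-1}\overline{a_k}+\overline{a_n}\big)$. In type $C_n$ this same quotient has denominator $3$ and carries no parity information, so instead I would use $\tilde{R}^{s-1}_{min}$, whose denominator $2(s+\mathfrak{e}-(s-1))=4$ cancels the factor $2$ in the numerator to yield $\tilde{R}^{s-1}_{min}=\tfrac12\big(\sum_{k=d_{s-1}}^{n-1}\overline{a_k}+\overline{a_n}\big)$. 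In either type the interior coefficients $\overline{a_k}$ with $d_{s-1}<k\le n-1$ are even by Lemma \ref{lemma3}, so integrality of this entry forces $\overline{a_{d_{s-1}}}+\overline{a_n}$ to be even, that is $\overline{a_{d_{s-1}}}\equiv\overline{a_n}\pmod 2$.

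Putting the two chains together, all of $\overline{a_{d_1}},\dots,\overline{a_{d_{s-1}}},\overline{a_n}$ share one parity, and since their minimum is $1$ that parity is odd. Finally, each of these $s$ numbers is itself an entry of $T_X^\lambda$ (some $\tilde{P}^{ii}_{min}$ or $\tilde{R}^{s}_{min}$), and the Ulrich property guarantees that all entries of $T_X^\lambda$ are distinct, so the $s$ numbers are pairwise different; this completes the proof. The only real work beyond transcribing the argument of Lemma \ref{lemma2} is the type-dependent step above, namely verifying that, thanks to $d_s=n$, one matrix entry collapses in each type to a quotient with denominator exactly $2$.
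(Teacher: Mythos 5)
Your proof is correct and takes essentially the same approach as the paper: the same identifications $\overline{a_{d_i}}=\tilde{P}^{ii}_{min}$ and $\overline{a_n}=\tilde{R}^{s}_{min}$, the same parity chain through the integrality of $\tilde{P}^{i,i+1}_{min}$, the minimum-equals-$1$ argument from \cite[Remark 3.4]{Fang}, and distinctness from the entries of $T_X^{\lambda}$ being distinct. The only (cosmetic) divergence is at the step linking $\overline{a_{d_{s-1}}}$ to $\overline{a_n}$: where you split by type and use integrality of $\tilde{Q}^{s-1,s-1}_{min}$ (type $B_n$, denominator $2\mathfrak{e}+1=2$) and of $\tilde{R}^{s-1}_{min}$ (type $C_n$, denominator $2(1+\mathfrak{e})=4$) separately, the paper handles both types at once via the single difference $\tilde{Q}^{s-1,s-1}_{min}-\tilde{R}^{s-1}_{min}=\frac{2\mathfrak{e}\left(\overline{a_n}-\sum_{k=d_{s-1}}^{n-1}\overline{a_k}\right)}{2(1+\mathfrak{e})(2\mathfrak{e}+1)}$, whose coefficient equals $\frac{1}{6}$ whether $\mathfrak{e}=\frac{1}{2}$ or $1$; both routes give $\overline{a_{d_{s-1}}}\equiv\overline{a_n}\pmod{2}$.
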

\begin{proof}
By Lemma \ref{lemma3}, we know that $\overline{a_k}$ is even for any $d_i+1\le k\le d_{i+1}-1$. Using similar arguments in Lemma \ref{lemma2}, we can easily get that all integers $\overline{a_{d_i}}~(1\le i\le s-1)$ have the same parity. Next, let us consider the difference
\[
\tilde{Q}^{s-1,s-1}_{min}-\tilde{R}^{s-1}_{min}=\frac{\sum_{k=d_{s-1}}^{n-1}\overline{a_k}+2\mathfrak{e}\overline{a_n}}{2\mathfrak{e}+1}-\frac{2\sum_{k=d_{s-1}}^{n-1}\overline{a_k}+2\mathfrak{e}\overline{a_n}}{2(1+\mathfrak{e})}=\frac{2\mathfrak{e}(\overline{a_n}-\sum_{k=d_{s-1}}^{n-1}\overline{a_k})}{2(1+\mathfrak{e})(2\mathfrak{e}+1)}.
\]
Because $E_{\lambda}$ is an Ulrich bundle, the above difference should be an integer. Note that $\frac{2\mathfrak{e}}{2(1+\mathfrak{e})(2\mathfrak{e}+1)}$ is $\frac{1}{6}$, whether $\mathfrak{e}=\frac{1}{2}$ or $1$. So $\overline{a_n}-\sum_{k=d_{s-1}}^{n-1}\overline{a_k}$ is even, which implies that $\overline{a_n}(=\overline{a_{d_s}})$ and $\overline{a_{d_{s-1}}}$ have the same parity. Therefore, all $\overline{a_{d_i}}~(1\le i\le s)$ have the same parity. Since $\min_{1\le i\le s}\{\overline{a_{d_i}}\}=1$, every $\overline{a_{d_i}}$ is odd. In addition, since for any $1\le i\le s-1$, $\overline{a_{d_i}}=\tilde{P}^{ii}_{min}$ and $\overline{a_n}=\tilde{R}^{s}_{min}$, they are naturally distinct.
\end{proof}	

Now, let's prove the nonexistence of irreducible homogeneous Ulrich bundles on $X$.\\ 

\textbf{Proof of Theorem \ref{mainthm} for type $B_n$ or $C_n$ and $d_s=n$:}		
Suppose there is an	irreducible homogeneous Ulrich bundle $E_{\lambda}$ on $X$ with $\lambda=a_1\lambda_1+\cdots+a_n\lambda_n$. Denote $T_X^{\lambda}=\{\tilde{P}^{ij}, \tilde{Q}^{ij}, \tilde{R}^{i}\}$ by the associated datum of $E_{\lambda}$. Then $2$ would appear in $T_X^{\lambda}$. By Lemmas \ref{lemma3} and \ref{lemma4}, we find that every entry of $\tilde{P}^{ii}~(1\le i\le s-1)$ is odd. In addition, if $n-d_{s-1}=1$, then $\tilde{R}^{s}$ has only one entry $\overline{a_n}$. If $n-d_{s-1}>1$, then the second smallest entry of $\tilde{R}^{s}$ is $\frac{\overline{a_{n-1}}+2\mathfrak{e}\overline{a_n}}{2\mathfrak{e}}$, which is greater than or equal to $3$, as $\overline{a_{n-1}}\ge 2\mathfrak{e}(1+2\mathfrak{e})$ by the hypothesis $s\ge 2$ and Lemma \ref{lemma3}. Hence $2$ can only appear as the smallest entry of some $\tilde{P}^{ij}~(1\le i<j\le s-1)$, $\tilde{R}^{i}~(1\le i\le s-1)$ or $\tilde{Q}^{ij}$. We first claim $2\notin \tilde{Q}^{ij}$.\\

\textbf{Claim:} $2$ does not appear in $\tilde{Q}^{ij}$.

Suppose there is a pair $(i_0,j_0)~(1\le i_0\le j_0\le s-1)$ such that $2=\tilde{Q}^{i_0,j_0}_{min}$, then by the sequence (\ref{min}), either  $\tilde{R}^{i_0}_{min}$ or  $\tilde{R}^{j_0+1}_{min}$ must be equal to $1$. On the other hand, since $E_{\lambda}$ is an Ulrich bundle, we have $\min\{\overline{a_{d_1}},\ldots,\overline{a_{d_{s-1}}},\overline{a_n}\}=1$. Hence $\overline{a_n}=\tilde{R}^{s}_{min}=1$, which implies that $j_0=s-1$. From the equality 
$$2=\tilde{Q}^{i_0,s-1}_{min}=\frac{(s+\mathfrak{e}-i)\tilde{R}^{i_0}_{min}+\mathfrak{e}}{s+2\mathfrak{e}-i},$$
we get $\tilde{R}^{i_0}_{min}=2+\frac{\mathfrak{e}}{s+\mathfrak{e}-i}$. It would not be an integer, whether $\mathfrak{e}=\frac{1}{2}$ or $1$, contrary to the hypothesis $E_{\lambda}$ is an Ulrich bundle. Therefore, $2$ does not appear in $\tilde{Q}^{ij}$.\\

Next, we prove that $2$ does not appear in either $\tilde{R}^{i}$ or $\tilde{P}^{ij}$. Assume there is some $i_0~(1\le i_0\le s-1)$ such that  $2=\tilde{R}^{i_0}_{min}=\frac{2\sum_{k=d_{i_0}}^{n-1}\overline{a_k}+2\mathfrak{e}\overline{a_n}}{2(s+\mathfrak{e}-i_0)}$. Then we have $4(s+\mathfrak{e}-i_0)=2(\sum_{k=d_{i_0}}^{n-1}\overline{a_k}+\mathfrak{e}\overline{a_n})$. Note that the left-hand side of the above equation is an even number and $\overline{a_n}$ is odd by Lemma \ref{lemma4}, we must have $\mathfrak{e}=1$, i.e., $2$ appears in $\tilde{R}^{i_0}$ only if $G$ is of type $C$. By substituting $\mathfrak{e}=1$ into the above equation, we get 
\begin{align}\label{fff}
4(s+1-i_0)=2\sum_{k=d_{i_0}}^{n}\overline{a_k}\ge 2(s-i_0+1)^2.
\end{align}
 Note that for the last inequality, we use the assertion that $\overline{a_{d_{i_0}}},\ldots, \overline{a_{d_{s-1}}}, \overline{a_{d_{s}}}(=\overline{a_n})$ are different odd numbers (see Lemma \ref{lemma4}). From (\ref{fff}), we can deduce that $i_0=s-1$. Hence $\sum_{k=d_{s-1}}^{n}\overline{a_k}=4$, which implies that $d_{s-1}=n-1$ and $\overline{a_n}$ is equal to $1$ or $3$. In either case, $\tilde{Q}^{s-1,s-1}_{min}=\frac{\sum_{k=d_{s-1}}^{n-1}\overline{a_k}+2\overline{a_n}}{3}=\frac{4+\overline{a_n}}{3}$ would not be an integer, which contradicts the hypothesis.\\
 
Assume $2$ appears in some $\tilde{P}^{ij}~(1\le i<j\le s-1)$, then $s\ge 3$ and
$2$ is equal to 
 $\tilde{P}^{t_0,t_0+1}_{min}$ for some $1\le t_0\le s-2$ (the reason is the same as Step $2$ in the proof for the case $s\ge 3$ and $d_s\ne n$ in Section \ref{section1}).
Then using Lemmas \ref{lemma3} and \ref{lemma4}, we can infer that $d_{t_0+1}=d_{t_0}+1$ and $(\overline{a_{d_{t_0}}},\overline{a_{d_{t_0+1}}})=(1,3)$ or $(3,1)$. Since $s\ge 3$, appling almost verbatim Step $3$ in Section \ref{section1}, we get $4$ would not appear as an entry of $\tilde{P}^{ij}$. Hence $4$ can only be equal to $\tilde{R}^{j_0}_{min}$ for some $1\le j_0\le s-1$ by (\ref{min}). Similar to the previous arguments, we would get $\mathfrak{e}=1$ and $j_0$ satisfies $s-3\le j_0\le s-1$. It is easy to judge that $j_0\ne s-2$, otherwise $4(s+1-j_0)=\sum_{k=d_{s-2}}^{n}\overline{a_k}$, which is absurd, because the left-hand side of the equality is even, but the right-hand side is odd due to Lemmas \ref{lemma3} and \ref{lemma4}. 

If $j_0=s-3$, then $16=\sum_{k=d_{s-3}}^{n}\overline{a_k}$. As $\sum_{k=d_{s-3}}^{n-1}\overline{a_k}\ge 9$, $\overline{a_n}\ge 5$ and $\overline{a_n}$ is odd, we have $\overline{a_n}=5$ or $7$. Then $\tilde{Q}^{s-3,s-1}_{min}=\frac{\sum_{k=d_{s-3}}^{n-1}\overline{a_k}+2\overline{a_n}}{5}=\frac{16+\overline{a_n}}{5}$ would not be an integer, whether $\overline{a_n}=5$ or $7$.

If $j_0=s-1$, then $8=\sum_{k=d_{s-1}}^{n}\overline{a_k}$. Since $\overline{a_n}\ge 5$ and it is odd, $\sum_{k=d_{s-1}}^{n-1}=3$, $\overline{a_n}=5$ or $\sum_{k=d_{s-1}}^{n-1}=1$, $\overline{a_n}=7$. If the former happens, $\tilde{Q}^{s-1,s-1}_{min}=\frac{\sum_{k=d_{s-1}}^{n-1}\overline{a_k}+2\overline{a_n}}{3}=\frac{13}{3}\notin \mathbb{Z}$. If the latter happens, the equality $\sum_{k=d_{s-1}}^{n-1}\overline{a_k}=1$ implies $d_{s-1}=n-1$ and $\overline{a_{d_{s-1}}}=1$. Thus we get $t_0=s-2$ and $\overline{a_{d_{s-2}}}=3$. It leads to $\tilde{R}^{s-2}_{min}=\frac{\sum_{k=d_{s-2}}^{n}\overline{a_k}}{3}=\frac{3+1+7}{3}=\frac{11}{3}\notin \mathbb{Z}$, which contrary to the hypothesis $E_{\lambda}$ is an Ulrich bundle.
 
 In summary, we conclude that there are no irreducible homogeneous Ulrich bundles on $X$ with respect to the minimal ample class.\\	

\subsection{Type $D_n$}
When $G$ is of type $D_n$, the existence problem gets more intricate due to the unique structure of its Dynkin diagram. We need to use different methods to consider the existence of Ulrich bundles for the following three cases.
\subsubsection{$J=\{d_1,\ldots,d_s\}$ with $d_s\le n-2$}
 According to \cite[Section 3.2.1 III Case (a)]{Fang}, the associated datum $T_X^{\lambda}$ of $E_{\lambda}$ is of the following form. (By convention we set $d_0=0$ and $d_{s+1}=n$.)

$T_X^{\lambda}=\{P^{ij}, Q^{ij}~(1\le i\le j\le s),R^{i}~(1\le i\le s)\}$, where
\begin{align*}
	&P^{ij}_{uv}=\frac{\sum\limits_{k=d_i-u+1}^{d_j+v-1}\overline{a_k}}{j-i+1}~(1\le u\le d_i-d_{i-1},1\le v\le d_{j+1}-d_j);\\
	&Q^{ij}_{uv}=\frac{\sum\limits_{k=d_{i-1}+u}^{n-2}\overline{a_k}+\sum\limits_{k=d_j+v}^{n}\overline{a_k}}
	{2s+1-(i+j)}
	~(1\le u\le d_i-d_{i-1},1\le v\le d_{j+1}-d_j);\\
	&R^{i}_{uv}=\frac{\sum\limits_{k=d_{i-1}+u}^{n-2}\overline{a_k}+\sum\limits_{k=d_{i-1}+v}^{n}\overline{a_k}}
	{2(s+1-i)}~(1\le u< v\le d_i-d_{i-1}).
\end{align*}

Similar to the proof of Lemma \ref{lemma1}, we can get the following lemma.
\begin{lem}\label{lcm1D}
	Let $E_{\lambda}$ be an irreducible homogeneous Ulrich bundle on $X$ with highest weight $\lambda=a_1\lambda_1+\cdots+a_n\lambda_n$. 
	\begin{itemize}
		\item[(i)] For any $1\le k\le d_1-1$, we have
		$lcm(1,2,\ldots, 2s-1)\mid\overline{a_k}$.
		\item[(ii)] For any integer $2\le i\le s+1$ and $d_{i-1}+1\le k\le d_{i}-1$, we have
		$$lcm(1,2,\ldots, 2s-2i+1)\mid\overline{a_k}~\text{and}~lcm(2s-2i+3,2s-2i+4,\ldots, 2s-i+1)\mid\overline{a_k}.$$
		\item[(iii)] $\overline{a_n}\ne \overline{a_{n-1}}$ and $lcm(1,2,\ldots,s)\mid\overline{a_n}$.
	\end{itemize}
\end{lem}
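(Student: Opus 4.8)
The plan is to imitate the proof of Lemma \ref{lemma1}. Since $E_\lambda$ is Ulrich, Theorem \ref{FN} guarantees that every entry of $P^{ij}$, $Q^{ij}$ and $R^i$ is an integer, so every difference of two entries is again an integer; each such difference will be a single $\overline{a_k}$ (or a short combination of the $\overline{a_k}$) divided by a denominator that I can let run over a range of consecutive integers. Reading off these denominators will produce the asserted divisibilities.

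For (i) and (ii) I fix $i$ and work inside the $i$-th block. The row differences $P^{ij}_{u+1,1}-P^{ij}_{u,1}=\overline{a_{d_i-u}}/(j-i+1)$, as $j$ runs from $i$ to $s$, give $lcm(1,\dots,s-i+1)\mid\overline{a_k}$ for every interior index $k$; the row differences $Q^{ij}_{u,1}-Q^{ij}_{u+1,1}=\overline{a_{d_{i-1}+u}}/(2s+1-(i+j))$ give, as $j$ runs from $i$ to $s$, the divisibility $lcm(s+1-i,\dots,2s+1-2i)\mid\overline{a_k}$. Combining the two yields the first divisibility of (ii), and for $i=1$ this is exactly (i). When $i\ge 2$, the column differences $Q^{i',i-1}_{1,u}-Q^{i',i-1}_{1,u+1}=\overline{a_{d_{i-1}+u}}/(2s+1-(i'+i-1))$ with $1\le i'\le i-1$ produce, as $i'$ runs, the second range $lcm(2s-2i+3,\dots,2s-i+1)\mid\overline{a_k}$. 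Here I must stress the structural point that distinguishes type $D$ from type $B$/$C$: because the matrices $R^i$ are now strictly upper triangular ($1\le u<v$), the diagonal differences that produced the factor $2(s+1-i)=2s-2i+2$ in Lemma \ref{lemma1} are no longer available, which is precisely why that factor is absent between the two ranges in (ii).

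For (iii) the idea is to isolate $\overline{a_{n-1}}$ and $\overline{a_n}$ through the two \emph{corner} entries of the rectangular blocks $P^{is}$ and $Q^{is}$, whose common denominator is $s+1-i$. Writing $S_i=\sum_{k=d_{i-1}+1}^{n-2}\overline{a_k}$, one checks $P^{is}_{d_i-d_{i-1},\,n-d_s}=(S_i+\overline{a_{n-1}})/(s+1-i)$ and $Q^{is}_{1,\,n-d_s}=(S_i+\overline{a_n})/(s+1-i)$; subtracting these two integers gives $(s+1-i)\mid\overline{a_{n-1}}-\overline{a_n}$, while the column difference $Q^{is}_{1,\,n-d_s-1}-Q^{is}_{1,\,n-d_s}=\overline{a_{n-1}}/(s+1-i)$ (legitimate because $d_s\le n-2$) gives $(s+1-i)\mid\overline{a_{n-1}}$. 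Together these force $(s+1-i)\mid\overline{a_n}$, and letting $i$ run from $1$ to $s$ yields $lcm(1,\dots,s)\mid\overline{a_n}$. Finally $\overline{a_n}\ne\overline{a_{n-1}}$ follows from distinctness: the two corner entries $P^{is}_{d_i-d_{i-1},\,n-d_s}$ and $Q^{is}_{1,\,n-d_s}$ lie in different blocks of $T_X^\lambda$, hence correspond to different positive roots in $\Phi_J^+$, so by Theorem \ref{FN} they take distinct values; were $\overline{a_{n-1}}=\overline{a_n}$ they would coincide, a contradiction.

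The part requiring the most care is (iii). Unlike in type $B$/$C$, the quantity $\overline{a_n}$ never occurs by itself and the diagonal entry $R^i_{11}$ that isolated it in Lemma \ref{lemma1} does not exist here, so the whole argument hinges on the observation that the top-right corner of $P^{is}$ and the top-right corner of $Q^{is}$ differ only by $\overline{a_{n-1}}-\overline{a_n}$. Once that is in place the rest is bookkeeping; the points I would check carefully are that all the corner and off-corner indices used actually lie in the admissible ranges (this is where the hypothesis $d_s\le n-2$ enters), and that the two corner entries genuinely belong to distinct blocks of the datum, so that the distinctness argument establishing $\overline{a_n}\ne\overline{a_{n-1}}$ is valid.
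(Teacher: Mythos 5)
Your proposal is correct and follows essentially the same route as the paper: parts (i)--(ii) transplant the difference arguments of Lemma \ref{lemma1} (with the same observation that the strictly upper triangular $R^i$ kills the $2(s+1-i)$ factor), and part (iii) rests on the identical key identity $Q^{is}-P^{is}=\frac{\overline{a_n}-\overline{a_{n-1}}}{s+1-i}$ at corner entries of those two blocks, with nonvanishing forced by distinctness of the entries of $T_X^\lambda$. The only cosmetic differences are your symmetric choice of corners (sharing $S_i=\sum_{k=d_{i-1}+1}^{n-2}\overline{a_k}$ rather than $\sum_{k=d_i}^{n-2}\overline{a_k}$) and deriving $(s+1-i)\mid\overline{a_{n-1}}$ from a column difference of $Q^{is}$ instead of citing part (ii), both of which are equivalent to the paper's steps.
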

\begin{proof}
The proof of (i) and (ii) is almost the same as the proof of Lemma \ref{lemma1}, except that we cannot figure out $\overline{a_k}$ is divisible by $2(s-i+1)$. This is because when $d_i-d_{i-1}$ is equal to $2$, there is only one entry in the matrix $R^{i}$.
	
For the statement (iii), let us compare the difference 
$$Q^{is}_{d_i-d_{i-1},n-d_s}-P^{is}_{1,n-d_s}=\frac{\overline{a_n}-\overline{a_{n-1}}}{s+1-i}.$$
Since $E_{\lambda}$ is an Ulrich bundle, the above difference should be a non-zero integer and hence $(s+1-i)\mid\overline{a_n}-\overline{a_{n-1}}$ for any $1\le i\le s$. By the statement (ii), we know that $lcm(1,2,\ldots,s)\mid\overline{a_{n-1}}$. Therefore $lcm(1,2,\ldots,s)\mid\overline{a_n}$.
\end{proof}

Using Lemma \ref{lcm1D}, we can immediately get the following lemma.
\begin{lem}\label{lcm2D}
	With the same assumption as above, then $\overline{a_{d_1}},\overline{a_{d_2}},\ldots,\overline{a_{d_s}}$ are $s$ different odd numbers.
\end{lem}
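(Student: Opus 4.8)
The plan is to imitate, essentially verbatim, the proof of Lemma \ref{lemma2}, replacing every appeal to Lemma \ref{lemma1} by the corresponding statement of Lemma \ref{lcm1D}. First I would record that $\min_{1\le i\le s}\{\overline{a_{d_i}}\}$ is the smallest entry of $T_X^{\lambda}$ (by \cite[Remark 3.4]{Fang}); since $E_{\lambda}$ is Ulrich, Theorem \ref{FN} forces $T_X^{\lambda}=\{1,2,\ldots,\dim X\}$, so this minimum equals $1$. If $s=1$ there is nothing more to prove, as $\overline{a_{d_1}}=1$ is odd. For $s\ge 2$ the distinctness is immediate: each $\overline{a_{d_i}}$ occurs as the entry $P^{ii}_{\min}=\overline{a_{d_i}}$ of $T_X^{\lambda}$, and all entries of $T_X^{\lambda}$ are pairwise distinct, so the $\overline{a_{d_i}}$ are distinct. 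Thus the whole content is to prove that every $\overline{a_{d_i}}$ is odd.

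For the parity, I would run the argument through the consecutive off-diagonal minima. For $1\le i\le s-1$ one has $P^{i,i+1}_{\min}=\tfrac{1}{2}\big(\overline{a_{d_i}}+\overline{a_{d_{i+1}}}+\sum_{k=d_i+1}^{d_{i+1}-1}\overline{a_k}\big)\in\mathbb{Z}$, so once I know that the interior sum $\sum_{k=d_i+1}^{d_{i+1}-1}\overline{a_k}$ is even, it follows that $\overline{a_{d_i}}+\overline{a_{d_{i+1}}}$ is even for every $i$, hence all $\overline{a_{d_i}}$ share a common parity. Since their minimum is $1$, they are all odd, and the proof is complete modulo this evenness claim. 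So everything reduces to showing that $\overline{a_k}$ is even (or at least that the relevant interior sums are even) for all indices $k\notin\{d_1,\ldots,d_s\}$.

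The hard part is exactly this evenness, and it is where type $D_n$ is genuinely more delicate than types $B_n,C_n$. In the $B/C$ case Lemma \ref{lemma1} supplied the divisibility $2(s+1-i)\mid\overline{a_k}$, which forces evenness immediately; in type $D_n$ this is unavailable, because when $d_i-d_{i-1}=2$ the matrix $R^i$ carries only a single entry and hence produces no difference (precisely the gap already noted in the proof of Lemma \ref{lcm1D}). For $s\ge 3$ there is no difficulty: Lemma \ref{lcm1D}(i) gives $lcm(1,\ldots,2s-1)\mid\overline{a_k}$ on the block before $d_1$, part (ii) yields an even divisor on each interior block (the first listed $lcm$ contains $2$ for the blocks of index $i\le s-1$, and the second contains $4$ for the block of index $s$ once $s\ge 3$), while the block beyond $d_s$ together with part (iii) handle $\overline{a_k}$ for $k>d_s$ and for $k=n$. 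The one residual case is $s=2$ on the block strictly between $d_1$ and $d_2$, where part (ii) gives only $3\mid\overline{a_k}$. I would close this by invoking the differences of $R^2$: since $R^2_{u,v}-R^2_{u+1,v}=\tfrac{1}{2}\overline{a_{d_1+u}}\in\mathbb{Z}$ whenever $u+1<v\le d_2-d_1$, together with the analogous column differences, all interior $\overline{a_k}$ are even as soon as $d_2-d_1\ge 3$; the remaining possibility $d_2-d_1=2$ (a single interior term, so that $R^2$ again affords no difference) must be treated by a short separate argument, for instance by combining the integrality of $P^{12}_{\min}$ with that of the neighbouring entries $P^{11}_{1,2}$ and $P^{22}_{2,1}$.
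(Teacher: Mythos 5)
Your overall route is exactly the one the paper intends: the paper prints no proof of Lemma \ref{lcm2D} beyond the remark that it follows from Lemma \ref{lcm1D}, and your reconstruction via the template of Lemma \ref{lemma2} is correct in structure --- distinctness because each $\overline{a_{d_i}}=P^{ii}_{\min}$ is an entry of $T_X^{\lambda}$ and those are pairwise distinct, the minimum equal to $1$ by \cite[Remark 3.4]{Fang}, and common parity from the integrality of the $P^{i,i+1}_{\min}$ once the interior $\overline{a_k}$ are even. You are also right, and more careful than the paper's ``immediately'', that the only delicate point is this evenness: Lemma \ref{lcm1D} settles it for $s\ge 3$ exactly as you say, and for $s=2$ it yields only $3\mid\overline{a_k}$ on $[d_1+1,d_2-1]$, because the divisibility $2(s+1-i)\mid\overline{a_k}$ of type $B/C$ is unavailable. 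Your treatment of the sub-case $d_2-d_1\ge 3$ by row and column differences inside $R^2$ (denominator $2(s+1-2)=2$) is correct.

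The gap is your proposed patch for the residual case $s=2$, $d_2-d_1=2$. The entries $P^{11}_{1,2}$ and $P^{22}_{2,1}$ have denominator $j-i+1=1$, so they are integers automatically and carry no parity information; moreover every entry of $P^{12}$ (and of $Q^{12}$) contains the whole segment $\overline{a_{d_1}}+\overline{a_{d_1+1}}+\overline{a_{d_2}}$ in its numerator, so modulo $2$ the $P$- and $Q$-matrices only ever produce the single relation $\overline{a_{d_1}}+\overline{a_{d_1+1}}+\overline{a_{d_2}}\equiv 0 \pmod 2$ (together with the already known evenness of $\overline{a_n}$), which cannot isolate the parity of $\overline{a_{d_1+1}}$. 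The missing bit comes from the one matrix you set aside: although $R^2$ has the single entry
\begin{align*}
R^2_{12}=\frac{\sum\limits_{k=d_1+1}^{n-2}\overline{a_k}+\sum\limits_{k=d_1+2}^{n}\overline{a_k}}{2}
=\frac{\overline{a_{d_1+1}}+2\overline{a_{d_2}}+2\sum\limits_{k=d_2+1}^{n-2}\overline{a_k}+\overline{a_{n-1}}+\overline{a_n}}{2},
\end{align*}
and hence no differences, its \emph{integrality} already forces $\overline{a_{d_1+1}}+\overline{a_{n-1}}+\overline{a_n}$ to be even; since $\overline{a_{n-1}}$ and $\overline{a_n}$ are even by Lemma \ref{lcm1D} (ii) (applied with $i=s+1$) and (iii), this makes $\overline{a_{d_1+1}}$ even and closes the case. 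In fact, reducing every entry $R^{2}_{uv}$ modulo $2$ gives that $\sum_{k=d_1+u}^{d_1+v-1}\overline{a_k}+\overline{a_{n-1}}+\overline{a_n}$ is even, so taking $v=u+1$ handles all $d_2-d_1\ge 2$ uniformly and renders your difference argument unnecessary as well.
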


With the above lemmas, let us first consider the simple case, where the Picard number of $X$ is greater than $2$.\\

\textbf{Proof of Theorem \ref{mainthm} for type $D_n$ and $s\ge 3$, $d_s\le n-2$:}		
Suppose $E_{\lambda}$ is an Ulrich bundle on $X$.	
Note that 
$$Q^{ij}_{uv}=\frac{\sum\limits_{k=d_{i-1}+u}^{n-2}\overline{a_k}+\sum\limits_{k=d_j+v}^{n-2}\overline{a_k}+\overline{a_{n-1}}+\overline{a_n}}
{2s+1-(i+j)}
~\text{and}~R^{i}_{uv}=\frac{\sum\limits_{k=d_{i-1}+u}^{n-2}\overline{a_k}+\sum\limits_{k=d_{i-1}+v}^{n-2}\overline{a_k}+\overline{a_{n-1}}+\overline{a_n}}{2(s+1-i)}.
$$
By Lemma \ref{lcm1D}, we have known that $\overline{a_{n-1}}\ge s(s-1)$ and $\overline{a_n}\ge s(s-1)$, so $\overline{a_{n-1}}+\overline{a_n}\ge 2s(s-1)$. Since $d_s\le n-2$, as in the previous analysis of Section \ref{section1} Step $1$, it is not difficult to obtain that $Q^{ij}_{uv}$ and $R^{i}_{uv}$ are greater than $4$ when $s\ge 3$. 

Note also that the matrix $P^{ij}$ in the associated datum of type $D$ coincides with the matrix $P^{ij}$ in the associated datum of type $B$, so verbatim following the proof after Step $2$ of Section \ref{section1}, we can conclude that $4$ does not appear in the associated datum of $E_{\lambda}$. It follows that there are no irreducible homogeneous Ulrich bundles on $X$.\\

Next, we deal with the complex case where the Picard number of $X$ is $2$, that is $s=2$. To this end, we need some preparatory work.

\begin{prop}\label{prop1}
	If there exists an irreducible homogeneous Ulrich bundle on $X$, then $d_1\ge 2$ and $d_2=d_1+1$.
\end{prop}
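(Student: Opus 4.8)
The plan is to pin down exactly where the value $2$ can occur in $T_X^\lambda$, extract the constraint $d_2=d_1+1$ from this, and then rule out $d_1=1$ by a parity/extremal argument parallel to the one used for type $B_n,C_n$ with $s=2$. Throughout I would work with the specialization $s=2$, $d_0=0$, $d_3=n$, and the arithmetic consequences of Lemmas \ref{lcm1D} and \ref{lcm2D}: $\overline{a_{d_1}},\overline{a_{d_2}}$ are distinct odd numbers (so $\overline{a_{d_1}}+\overline{a_{d_2}}\ge 4$), each $\overline{a_k}$ with $d_1<k<d_2$ is divisible by $3$, each $\overline{a_k}$ with $k\le d_1-1$ is divisible by $6$, each $\overline{a_k}$ with $d_2<k\le n-1$ is even, and $\overline{a_{n-1}},\overline{a_n}$ are distinct even numbers, whence $\overline{a_{n-1}}+\overline{a_n}\ge 6$.

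First I would prove $d_2=d_1+1$. Since $E_\lambda$ is Ulrich, $2\in T_X^\lambda$, so the task is to locate it. A size estimate with the denominators $2s+1-(i+j)$ and $2(s+1-i)$ for $s=2$, using the sharp bound $\overline{a_{n-1}}+\overline{a_n}\ge 6$, shows that every entry of $Q^{ij}$ and $R^i$ exceeds $2$ (the tightest case is $Q^{11}_{min}\ge \tfrac{11}{3}$ and $R^i_{min}$, both $>2$). Moreover every entry of $P^{11}$ (resp. $P^{22}$) contains the odd term $\overline{a_{d_1}}$ (resp. $\overline{a_{d_2}}$) together with terms that are either $\ge 3$ or $\ge 6$, so such an entry equals $1$ or is $\ge 3$, never $2$. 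Hence $2=P^{12}_{min}=\tfrac12\sum_{k=d_1}^{d_2}\overline{a_k}$, i.e. $\sum_{k=d_1}^{d_2}\overline{a_k}=4$. Because $\overline{a_{d_1}}+\overline{a_{d_2}}\ge 4$ and every interior term is $\ge 3$, there can be no interior term, giving $d_2=d_1+1$ and $(\overline{a_{d_1}},\overline{a_{d_2}})=(1,3)$ or $(3,1)$.

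The harder part, and the main obstacle, is excluding $d_1=1$. Suppose $d_1=1$, so $J=\{1,2\}$ and $d_2=2$; then both $R^1$ and $R^2$ are empty, since no pair $u<v$ is available. Now every $\overline{a_k}$ with $k\ge 3$ is even while $\overline{a_1},\overline{a_2}$ are odd, so I would check that $P^{11},P^{22},Q^{11},Q^{22}$ consist entirely of odd entries (each contains exactly one of $\overline{a_1},\overline{a_2}$), which means every even value of $T_X^\lambda$ must come from $P^{12}$ or $Q^{12}$. Next I would verify that the global maximum of $T_X^\lambda$ is $Q^{22}_{11}$ (it dominates $Q^{12}_{11}$, $P^{12}_{max}$, $P^{22}_{max}$, $Q^{11}_{11}$ and $P^{11}$ term by term), so $\dim X=Q^{22}_{11}$ is odd. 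Then $\dim X-1$ is the largest even value in $\{1,\dots,\dim X\}$ and must be realized in $P^{12}\cup Q^{12}$, whose largest entry is $Q^{12}_{11}$; this forces $Q^{12}_{11}=\dim X-1=Q^{22}_{11}-1$. But a short computation gives $Q^{22}_{11}-Q^{12}_{11}=\tfrac12(\overline{a_2}-\overline{a_1}+2\sum_{k=3}^{n-2}\overline{a_k}+\overline{a_{n-1}}+\overline{a_n})\ge\tfrac12(-2+6)=2$, contradicting $Q^{22}_{11}-Q^{12}_{11}=1$. Hence $d_1\ge 2$.

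The steps I expect to demand the most care are the uniform estimates ruling out $2$ in all $Q^{ij}$ and $R^i$ (for $s=2$ the denominators are smallest, so these bounds are tightest and must use $\overline{a_{n-1}}+\overline{a_n}\ge 6$), and the verification that $Q^{22}_{11}$ is genuinely the global maximum when $d_1=1$, since the extremal argument reads off the parity of $\dim X$ from that single matrix. Both reduce to finite comparisons among the $P$, $Q$, $R$ blocks, but they are exactly where a mis-estimated entry would derail the argument.
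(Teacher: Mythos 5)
Your proposal is correct. The first half (forcing $d_2=d_1+1$) is essentially the paper's argument: size estimates exclude $2$ from every $Q^{ij}$ and $R^i$, and then $2=P^{12}_{11}=\tfrac12\sum_{k=d_1}^{d_2}\overline{a_k}$ forces $d_2=d_1+1$ and $(\overline{a_{d_1}},\overline{a_{d_2}})=(1,3)$ or $(3,1)$; in fact your formulation is slightly more careful than the paper's, since in type $D_n$ with $s=2$ the coefficients $\overline{a_k}$ with $d_1<k<d_2$ are only known to be divisible by $3$ (Lemma \ref{lcm1D}(ii) does not make them even), so your observation that entries of $P^{11},P^{22}$ are either a single odd coefficient or at least $3$, hence never $2$, is exactly what is needed, whereas the paper's blanket claim that those entries are odd is not literally justified there (though harmless). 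Where you genuinely diverge is the exclusion of $d_1=1$. The paper gets this in one line from an external count: $\dim X=|T_X^{\lambda}|=2(d_1+1)(n-d_2)+2d_1+\frac{d_1(d_1-1)}{2}$ is even for $d_1=1$, while $\dim X=\max\{m_1,m_2\}$ (quoting \cite{Fang}) is odd. You instead stay inside the datum, transplanting the collision mechanism of the type $B_n/C_n$, $s=2$ proof: with $d_1=1$ both $R^i$ are empty and $P^{11},P^{22},Q^{11},Q^{22}$ have only odd entries, so the even number $\dim X-1$ must lie in $P^{12}\cup Q^{12}$; since $Q^{22}_{11}$ is the global maximum (so $\dim X=Q^{22}_{11}$ is odd) and $Q^{12}_{11}$ dominates $P^{12}\cup Q^{12}$, distinctness of entries forces $Q^{12}_{11}=\dim X-1$, contradicting
\[
Q^{22}_{11}-Q^{12}_{11}=\tfrac12\Bigl(\overline{a_2}-\overline{a_1}+2\sum_{k=3}^{n-2}\overline{a_k}+\overline{a_{n-1}}+\overline{a_n}\Bigr)\ge\tfrac12(-2+6)=2,
\]
which I have verified, including the input $\overline{a_{n-1}}+\overline{a_n}\ge 6$ (distinct even numbers, by Lemma \ref{lcm1D}(ii),(iii)) and the maximality comparisons. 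Your route buys independence from the explicit dimension formula and from the reference for $\dim X=\max\{m_1,m_2\}$, at the price of the extra extremal verifications; the paper's parity-of-$\dim X$ clash is shorter but imports those two facts.
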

\begin{proof}
Suppose that $E_{\lambda}$ is an Ulrich bundle on $X$. Since $d_2\le n-2$ and $\overline{a_{n-1}}+\overline{a_n}\ge 2s(s-1)=4$, we have every $Q^{ij}_{uv}$ is greater than $2$ and so is $R^{i}_{uv}$.
Hence $2$ could only be an entry of some matrix $P^{ij}$, where $1\le i\le j\le 2$.
As every entry of the matrix $P^{11}$ or $P^{22}$ is odd, $2$ appear as the smallest entry of $P^{12}$,
 i.e., $2=P^{12}_{min}=\frac{\sum_{k=d_1}^{d_2}\overline{a_k}}{2}$, which implies that $	d_2=d_1+1$ and $(\overline{a_{d_1}}, \overline{a_{d_2}})=(1,3)$ or $(3,1)$, as $\overline{a_k}\ge 3$ for any $d_1+1\le k\le d_2-1$ by Lemma \ref{lcm1D} (ii).

Note that $\dim X=|T_X^{\lambda}|=2(d_1+1)(n-d_2)+2d_1+\frac{d_1(d_1-1)}{2}$. If $d_1$ were equal to $1$, then $\dim X$ is even. However by the assumption that $E_{\lambda}$ is an Ulrich bundle, $\dim X$ should be equal to $\max\{\sum_{k=1}^{d_1}\overline{a_k},\sum_{k=d_2}^{n-2}\overline{a_k}+\sum_{k=d_2+1}^{n}\overline{a_k}\}$ (see \cite[Page 11 (3.15)]{Fang}), which implies that $\dim X$ is odd. Hence $d_1\ge 2$.	
\end{proof}

\begin{rem}
Since $d_2-d_1=1$, $T_X^{\lambda}$ consists of matrices $P^{ij},~ Q^{ij}(1\le i\le j\le 2)$ and $R^1$, where $R^{1}$ is a strictly upper triangular matrix, $P^{11}$ and $Q^{11}$ are $(d_1\times 1)$-matrices, $P^{22}$ and $Q^{22}$ are $(1\times (n-d_2))$-matrices. Moreover, from Lemmas \ref{lcm1D} and \ref{lcm2D}, we conclude that all entries in the matrices $P^{11}$, $P^{22}$, $Q^{11}$ and $Q^{22}$ are odd, which is very important for the subsequent discussion.
\end{rem}

\begin{prop}\label{prop2}
If there exists an Ulrich bundle $E_{\lambda}$ with $\lambda=a_1\lambda_1+\cdots+a_n\lambda_n$ on $X$, then 
the following holds:
\begin{itemize}
	\item[(1)] $\overline{a_{d_1}}=1$, $\overline{a_{d_2}}=3$, $\overline{a_{d_2+1}}=4$, $\dim X=\sum_{k=1}^{d_1}\overline{a_k}$ and $\dim X-2=\sum_{k=d_2}^{n-2}\overline{a_k}+\sum_{k=d_2+1}^{n}$.
	\item[(2)] $d_1\ge 3$ and  $12\mid\overline{a_k}$ for any $1\le k\le d_1-1$.
	\item[(3)]  $4\mid \overline{a_n}-\overline{a_{n-1}}$.
\end{itemize}		
\end{prop}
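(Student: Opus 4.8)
The plan is to use the extreme rigidity the datum acquires once Proposition~\ref{prop1} gives $d_2=d_1+1$ and $(\overline{a_{d_1}},\overline{a_{d_2}})\in\{(1,3),(3,1)\}$, together with the observation in the remark preceding the proposition that the only matrices carrying even entries are $P^{12}$, $Q^{12}$ and the denominator-$4$ matrix $R^1$. I would first note that $m_1:=\sum_{k=1}^{d_1}\overline{a_k}=P^{12}$-free maximum $=P^{11}_{\max}$ and $m_2:=\sum_{k=d_2}^{n-2}\overline{a_k}+\sum_{k=d_2+1}^{n}\overline{a_k}=Q^{22}_{\max}$ are odd, so $\dim X=\max\{m_1,m_2\}$ is odd and $\dim X-1$ is even. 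A direct computation gives $Q^{12}_{\max}=\tfrac12(m_1+m_2)$ and shows $Q^{12}_{\max}$ strictly exceeds both $P^{12}_{\max}$ and $R^1_{\max}$; moreover $Q^{12}_{\max}<\dim X$ since $m_1\neq m_2$ by distinctness. As the even number $\dim X-1$ must appear and must be the maximal entry of whichever of $P^{12},Q^{12},R^1$ contains it, it can only be $Q^{12}_{\max}$. Hence $\dim X-1=\tfrac12(m_1+m_2)$, i.e. $\{m_1,m_2\}=\{\dim X,\dim X-2\}$, which in particular records $|m_1-m_2|=2$.

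Next I would pin down $\overline{a_{d_2+1}}=4$ by locating the entry $4$. Since $P^{12}_{1,2}=2+\tfrac12\overline{a_{d_2+1}}$ and $\overline{a_{d_2+1}}$ is even, the value $\overline{a_{d_2+1}}=2$ would give $P^{12}_{1,2}=3=\overline{a_{d_1}}$ or $\overline{a_{d_2}}$, violating distinctness, so $\overline{a_{d_2+1}}\ge4$. Feeding this and the lower bounds of Lemma~\ref{lcm1D} back into the minima of $Q^{12}$ and $R^1$ shows every entry of $Q^{12}$ and $R^1$ exceeds $4$; thus $4\in P^{12}$, and inspecting $P^{12}$ entry by entry (using $6\mid\overline{a_{d_1-1}}$) leaves only $P^{12}_{1,2}=4$, i.e. $\overline{a_{d_2+1}}=4$. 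The remaining assertions of (1), namely $\overline{a_{d_1}}=1$, $\overline{a_{d_2}}=3$, and that it is $m_1$ that equals $\dim X$, together with the bound $d_1\ge3$ in (2), I would establish by a single interlocking finite analysis: a placement argument for the smallest integers $3,4,5,6$ (exploiting the asymmetry that $\overline{a_{d_1-1}}$ is divisible by $6$ while $\overline{a_{d_2+1}}=4$, and using distinctness) forces $\overline{a_{d_1}}=1$ and $\overline{a_{d_2}}=3$; then, substituting $\overline{a_{d_1}}=1$ into $m_1$ and using the closed form $\dim X=|T_X^\lambda|=2(d_1+1)(n-d_2)+2d_1+\tfrac{d_1(d_1-1)}{2}$ from Proposition~\ref{prop1} together with $\{m_1,m_2\}=\{\dim X,\dim X-2\}$, a residue computation modulo $6$ contradicts $6\mid\overline{a_1}$ when $d_1=2$, giving $d_1\ge3$.

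Once $d_1\ge3$ is available, the divisibility in (2) is immediate from the only denominator-$4$ matrix: the differences $R^1_{u,v}-R^1_{u+1,v}=\tfrac14\overline{a_u}$ and $R^1_{u,v}-R^1_{u,v+1}=\tfrac14\overline{a_v}$ are integers, so $4\mid\overline{a_k}$ for $1\le k\le d_1-1$, which combined with $lcm(1,2,3)=6\mid\overline{a_k}$ from Lemma~\ref{lcm1D}(i) yields $12\mid\overline{a_k}$. Part (3) then follows cleanly: writing $S_1=\sum_{k=1}^{d_1-1}\overline{a_k}$ and substituting the values from (1) into $|m_1-m_2|=2$ gives a relation of the shape $S_1=c+2\sum_{k=d_2+2}^{n-2}\overline{a_k}+\overline{a_{n-1}}+\overline{a_n}$ with $4\mid c$; since $12\mid S_1$ by (2) and each interior $\overline{a_k}$ is even by Lemma~\ref{lcm1D}, reducing modulo $4$ leaves $\overline{a_{n-1}}+\overline{a_n}\equiv0$, and because $\overline{a_{n-1}}$ is even this is exactly $4\mid\overline{a_n}-\overline{a_{n-1}}$.

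The genuine obstacle is the interlocking finite analysis in (1)--(2): breaking the $(1,3)$ versus $(3,1)$ symmetry, fixing which of $m_1,m_2$ is $\dim X$, and excluding $d_1=2$ cannot be done by any single divisibility and instead requires simultaneously tracking the exact placement of the smallest entries, the distinctness constraint, and the arithmetic of $|T_X^\lambda|$; everything downstream (the $12\mid$ statement and all of (3)) is a short and essentially forced consequence of the values so obtained.
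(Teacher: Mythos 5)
Your skeleton matches the paper's up to a point: you correctly obtain $\dim X-1=Q^{12}_{\max}=\tfrac12(m_1+m_2)$ and hence $\{m_1,m_2\}=\{\dim X,\dim X-2\}$; your mod-$6$ exclusion of $d_1=2$ is equivalent to the paper's observation that $Q^{11}_{d_1,1}=\tfrac{\dim X+2}{3}$ forces $3\mid \dim X+2$; the $R^1$-difference argument for $4\mid\overline{a_k}$ is exactly the paper's; and your derivation of (3) by reducing $m_1-m_2=2$ modulo $4$ is a valid (slightly cleaner) alternative to the paper's integrality of $Q^{12}_{d_1,n-d_2}-R^1_{d_1-1,d_1}=\tfrac{\overline{a_n}-\overline{a_{n-1}}-\overline{a_{d_1-1}}}{4}$. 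But there is a genuine gap at the pivotal step locating $4$. Your claim that, once $\overline{a_{d_2+1}}\ge 4$, ``every entry of $Q^{12}$ exceeds $4$'' is false: $Q^{12}_{\min}=Q^{12}_{d_1,n-d_2}=\tfrac12\bigl(\overline{a_{d_1}}+\overline{a_{d_2}}+\sum_{k=d_2+1}^{n-2}\overline{a_k}+\overline{a_n}\bigr)$, and when $d_2=n-2$ the interior sum is empty, so $\overline{a_n}=4$ yields $Q^{12}_{\min}=\tfrac{4+4}{2}=4$. Your bound $\overline{a_{d_2+1}}\ge 4$ says nothing here, because $\overline{a_{d_2+1}}=\overline{a_{n-1}}$ simply does not occur in $Q^{12}_{\min}$. (Your bound does dispose of the case $d_2<n-2$, where $Q^{12}_{\min}\ge\tfrac{4+4+2}{2}=5$, which absorbs the paper's possibility $d_2=n-3$, $\overline{a_{d_2+1}}=2$; it is only the configuration $d_2=n-2$, $\overline{a_n}=4$ that survives.)

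Ruling out that configuration is precisely where the bulk of the paper's proof lives, and it cannot be recovered from divisibility bounds alone: the paper treats it separately under each surviving sign pattern and shows that some small integer can appear \emph{nowhere} in the datum, contradicting $T_X^{\lambda}=\{1,\ldots,\dim X\}$ --- for $(\overline{a_{d_1}},\overline{a_{d_2}})=(3,1)$ with $m_2=\dim X$ the even number $6$ is checked slot by slot against $R^1_{\min}$, $Q^{12}_{d_1,n-d_2-1}$, $Q^{12}_{d_1-1,n-d_2}$, $P^{12}_{21}$ and $P^{12}_{12}$, while for $(1,3)$ with $m_1=\dim X$ the number $5$ is likewise excluded from every matrix. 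Relatedly, your ``interlocking finite analysis'' that breaks the $(1,3)$/$(3,1)$ symmetry and decides $m_1=\dim X$ is left as a black box, whereas the paper's mechanism consists of two concrete collision identities: $Q^{12}_{d_1,1}=\tfrac{\overline{a_{d_1}}+m_2}{2}=\tfrac{\dim X+1}{2}=R^1_{1,d_1}$ kills the two cases with $\overline{a_{d_1}}+m_2=\dim X+1$, and, using $\overline{a_{d_2+1}}=4$, the identity $P^{12}_{d_1,1}=\tfrac{m_1+\overline{a_{d_2}}}{2}=\tfrac{\dim X-1}{2}=Q^{12}_{d_1,2}$ kills $(3,1)$ with $m_2=\dim X$. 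Note also that these steps interleave: the paper's exclusion of the $\overline{a_n}=4$ configuration is carried out \emph{per} sign pattern, so your plan of locating $4$ before any symmetry-breaking would force the missing case analysis to be done there in any event. As it stands, statement (1), and with it everything downstream, is unproven in your outline.
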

\begin{proof}
Similar to the proof for the case $s=2$ and $d_s\ne n$ in Section \ref{section1}, it is not hard to see $\dim X-1=Q^{12}_{max}$, where $$Q^{12}_{max}=\frac{\sum\limits_{k=1}^{n-2}\overline{a_k}+\sum\limits_{k=d_2+1}^{n}\overline{a_k}}{2}=\frac{\sum\limits_{k=1}^{d_1}\overline{a_k}+\sum\limits_{k=d_2}^{n-2}\overline{a_k}+\sum\limits_{k=d_2+1}^{n}\overline{a_k}}{2}.$$ 
It follows that $\dim X$ and $\dim X-2$ are one of  $\sum_{k=1}^{d_1}\overline{a_k}$ and $\sum_{k=d_2}^{n-2}\overline{a_k}+\sum_{k=d_2+1}^{n}\overline{a_k}$ respectively, as $\dim X$ is the maximum value between these two integers. So there are four possibilities for $\overline{a_{d_1}}$, $\overline{a_{d_2}}$, $m_1:=\sum_{k=1}^{d_1}\overline{a_k}$ and $m_2:=\sum_{k=d_2}^{n-2}\overline{a_k}+\sum_{k=d_2+1}^{n}\overline{a_k}$:
\begin{itemize}
	\item[(a)] $m_1=\dim X$, $m_2=\dim X-2$, $\overline{a_{d_1}}=3$ and $\overline{a_{d_2}}=1$;
	\item[(b)] $m_1=\dim X-2$, $m_2=\dim X$, $\overline{a_{d_1}}=1$ and $\overline{a_{d_2}}=3$;
	\item[(c)] $m_1=\dim X-2$, $m_2=\dim X$, $\overline{a_{d_1}}=3$ and $\overline{a_{d_2}}=1$;
	\item[(d)] $m_1=\dim X$, $m_2=\dim X-2$, $\overline{a_{d_1}}=1$ and $\overline{a_{d_2}}=3$.
\end{itemize}
For the cases $(a)$ and $(b)$, since 
$$
Q^{12}_{d_1,1}=\frac{\overline{a_{d_1}}+m_2}{2}=\frac{\dim X+1}{2}~\text{and}~
R^1_{1,d_1}=\frac{m_1+m_2+\overline{a_{d_1}}+\overline{a_{d_2}}}{4}=\frac{\dim X+1}{2},$$
 this contradicts the hypothesis that $E_{\lambda}$ is an Ulrich bundle and hence neither possibility exists. For the cases $(c)$ and $(d)$, we first prove that $4$ can only be equal to $P^{12}_{12}$.
 
 Since all entries in the matrices $P^{11}$, $P^{22}$, $Q^{11}$ and $Q^{22}$ are odd, $4$ can only appear in the matrix $R^1$, $Q^{12}$ and $P^{12}$. Since $1$, $2$ and $3$ appear as $P^{11}_{min}$,  $P^{12}_{min}=P^{12}_{11}$ and $P^{22}_{min}$ respectively, $4$ can only be equal to the smallest entry of the matrix 
 $R^1$ or $Q^{12}$ or the second smallest entry of $P^{12}$, i.e., $4$ is $R^1_{min}$, $Q^{12}_{min}$ or $\min\{P^{12}_{21},P^{12}_{12}\}$.
 By Lemma \ref{lcm1D}, we know that $\overline{a_{d_1-1}}\ge 6$ and $\overline{a_{n-1}}+\overline{a_n}\ge 4$, hence $P^{12}_{21}=\sum_{k=d_1-1}^{d_2}\overline{a_k}\ge5$ and 
  $R^1_{min}=\frac{\sum_{k=d_1-1}^{n-2}\overline{a_k}+\sum_{k=d_1}^{n}\overline{a_k}}{4}\ge \frac{6+2\times 4+4}{4}>4.$  \\
  
 \textbf{Claim:} $4$ is not equal to $Q^{12}_{min}$, where $Q^{12}_{min}=Q^{12}_{d_1,n-d_2}=\frac{4+\sum_{k=d_2+1}^{n-2}\overline{a_k}+\overline{a_n}}{2}$.
  
  Suppose this were proved, then $4$ can only equal to $P^{12}_{12}=\frac{\sum_{k=d_1}^{d_2+1}\overline{a_k}}{2}$, which implies that $\overline{a_{d_2+1}}=4$. Next, we illustrate that Case (c) is impossible. If Case (c) occurs, it is easy to find 
  $$P^{12}_{d_1,1}=\frac{\sum\limits_{k=1}^{d_2}\overline{a_k}}{2}=\frac{m_1+\overline{a_{d_2}}}{2}=\frac{\dim X-1}{2}~\text{and}~Q^{12}_{d_1,2}
  =\frac{\overline{a_{d_1}}+m_2-\overline{a_{d_2+1}}}{2}=\frac{\dim X-1}{2},$$ 
  which contradicts $E_{\lambda}$ being an Ulrich bundle. So, only Case (d) can happen and that gives us the first statement. Note that in this case, $Q^{11}_{d_1,1}=\frac{\sum_{k=d_1}^{n-2}\overline{a_k}+\sum_{k=d_2}^{n}\overline{a_k}}{3}=\frac{m_2+\overline{a_{d_1}}+\overline{a_{d_2}}}{3}=\frac{\dim X+2}{3}$. Since $E_{\lambda}$ is an Ulrich bundle, $Q^{11}_{d_1,1}$ should be an integer, which implies that $3\mid \dim X+2$. 
  If $d_1$ were $2$, then $\dim X=2(d_1+1)(n-d_2)+2d_1+\frac{d_1(d_1-1)}{2}=6n-13$, which contradicts $\dim X+2$ being divisible by $3$. Hence $d_1\ge 3$. It follows that the matrix $R^1$ has at least three elements. By comparing the difference between any two adjacent elements of the first row and last column of $R^1$, we can see that $\overline{a_k}$ is divisible by $4$ for any $1\le k\le d_1-1$, as all differences must be integers. Moreover, by Lemma \ref{lcm1D}, we know $\overline{a_k}$ is divisible by $6$, which brings us to the second statement. In addition, since $Q^{12}_{d_1,n-d_2}-R^1_{d_1-1,d_1}=\frac{\overline{a_n}-\overline{a_{n-1}}-\overline{a_{d_1-1}}}{4}$ is an integer and $\overline{a_{d_1-1}}$ is divisible by $4$, the third statement follows immediately.\\ 
   
It thus remains to prove our claim. If $4$ were equal to $Q^{12}_{min}$, then $\sum_{k=d_2+1}^{n-2}\overline{a_k}+\overline{a_n}=4$. By Lemma \ref{lcm1D} (ii) and (iii), we know $\overline{a_k}~(d_2+1\le k\le n-1)$ and $\overline{a_n}$ are all even. Thus 
 there are two possibilities: 
 \begin{itemize}
 	\item[(1)]  $\overline{a_n}=2$ and $\sum_{k=d_2+1}^{n-2}\overline{a_k}=2$, which implies that $d_2=n-3$ and $\overline{a_{d_2+1}}=2$;
 	\item[(2)]  $\overline{a_n}=4$ and $\sum_{k=d_2+1}^{n-2}\overline{a_k}=0$, which implies that $d_2=n-2$.
 \end{itemize} 
  We say (1) cannot happen, because in this case $P^{12}_{12}=\frac{\sum_{k=d_1}^{d_2+1}\overline{a_k}}{2}=3$ coincides with $\overline{a_{d_1}}$ or $\overline{a_{d_2}}$. Next, we show that (2) is also impossible for either Case (c) or (d). \\
 
 Assume Cases (c) and (2) happen, then 
 $Q^{22}_{min}=\overline{a_{d_2}}+\overline{a_n}=5$. In this case $\overline{a_{n-1}}(=\overline{a_{d_2+1}})$ cannot be equal to $4$ or $6$, otherwise $P^{12}_{12}=\frac{\sum_{k=d_1}^{d_2+1}\overline{a_k}}{2}$ would be equal to $4$ or $5$, which has appear in the associated datum of $E_{\lambda}$. To prove that Cases (c) and (2) cannot happen, it suffices to state that $6$ does not appear in the associated datum of $E_{\lambda}$. If $6$ appears, since $6$ is even, $6$ can only be $R^1_{min}$, $\min\{Q^{12}_{d_1-1,n-d_2},Q^{12}_{d_1,n-d_2-1}\}$ or $\min\{P^{12}_{21},P^{12}_{12}\}$. We separately prove that all possibilities are impossible.
 
 If $6=R^1_{min}=
 \frac{\overline{a_{d_1-1}}+\overline{a_{n-1}}+12}{4}$, then $\overline{a_{d_1-1}}+\overline{a_{n-1}}=12$. By Lemma \ref{lcm1D} (i), $6\mid \overline{a_{d_1-1}}$, so we have $\overline{a_{d_1-1}}=\overline{a_{n-1}}=6$, this contradicts the previous argument that $\overline{a_{n-1}}$ is not equal to $6$. 

 If $6=Q^{12}_{d_1,n-d_2-1}=
 \frac{8+\overline{a_{n-1}}}{2}$, then $\overline{a_{n-1}}=4$, which is also impossible.
 
 If $6$ is equal to $P^{12}_{21}(=\frac{4+\overline{a_{d_1-1}}}{2})$ or  $Q^{12}_{d_1-1,n-d_2}=(\frac{8+\overline{a_{d_1-1}}}{2})$, then $\overline{a_{d_1-1}}=8$ or $4$, which contradicts $\overline{a_{d_1-1}}$ being divisible by $6$. 
 
  If $6=P^{12}_{12}=\frac{4+\overline{a_{d_2+1}}}{2}$, then $\overline{a_{d_2+1}}=8$. 
 However, this leads to  $Q^{11}_{d_1,1}=\frac{\sum_{k=d_1}^{n-2}\overline{a_k}+\sum_{k=d_2}^{n}\overline{a_k}}{3}=\frac{17}{3}$ is not an integer.
So in summary, $6$ does not appear in the associated datum of $E_{\lambda}$.\\
 
Assume Cases (d) and (2) happen, then 
$Q^{22}_{min}=7$. In this case $\overline{a_{n-1}}$ cannot be equal to $2$ or $4$, otherwise $P^{12}_{12}$ would be equal to $3$ or $4$, which has appear in the associated datum of $E_{\lambda}$. To prove that Cases (d) and (2) cannot happen, it suffices to state that $5$ does not appear in the associated datum $T_X^{\lambda}=\{P^{ij},Q^{ij}~(1\le i\le j\le 2), R^1\}$.

Firstly, since $Q^{22}_{min}=7$, $5$ does not appear in $Q^{22}$.

 Secondly, if $5\in R^1$ or $5\in Q^{11}$, then $5$ would be equal to $R^1_{min}$ or $Q^{11}_{min}$, which implies that $\overline{a_{n-1}}=2$ or $\overline{a_{n-1}}=4$, which is  impossible by the previous argument.

Next, if $5\in Q^{12}$, then $5$ would be equal to $Q^{12}_{d_1-1,n-d_2}$ or $Q^{12}_{d_1,n-d_2-1}$, which implies that $\overline{a_{d_1-1}}=2$ or $\overline{a_{n-1}}=2$. This contradicts Lemma \ref{lcm1D} (i).

 Finally, since $\overline{a_{d_1-1}}\ge 6$ and $\overline{a_{n-1}}\ne 2$, we have $P^{11}_{21}=1+\overline{a_{d_1-1}}\ge 7$ and $P^{22}_{12}=3+\overline{a_{n-1}}\ne 5$. It follows that $5$ does not appear in $P^{11}$ or $P^{22}$. 
If $5\in P^{12}$, then $5$ would be equal to $P^{12}_{12}$ or $P^{12}_{21}$, which implies that $\overline{a_{d_2+1}}=6$ or $\overline{a_{d_1-1}}=6$. However this leads to $Q^{12}_{d_1,1}=7$ or $P^{11}_{21}=7$, which coincides with $Q^{22}_{min}$, so it is also impossible.
In summary, $5$ does not appear in the associated datum of $E_{\lambda}$.
\end{proof}	

Next, we prove there are some restrictions on the associated datum for an arbitrary irreducible homogeneous Ulrich bundle. This is the key to proving that $X$ does not carry irreducible homogeneous Ulrich bundles. Denote $M(i,~)$ by the $i$-th row of the matrix $M$. We write $x\in M(i,~)$ if $x$ is an entry of $M(i,~)$.  
\begin{prop}\label{prop3}
	Let $E_{\lambda}$ be an Ulrich bundle with $\lambda=a_1\lambda_1+\cdots+a_n\lambda_n$ on $X$ and  $T_X^{\lambda}=\{P^{ij}, Q^{ij}~(1\le i\le j\le 2),R^{i}~(1\le i\le 2)\}$ the associated datum of $E_{\lambda}$. Then 
	\begin{align}\label{*}
	\text{for any}~ x\in P^{12}(1,~), \text{we have}~ x+l\in P^{12}(1,~)~\text{or}~ P^{22}(1,~)~\text{for}~ l=1,2\tag{*}.
	\end{align}
\end{prop}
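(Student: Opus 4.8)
The plan is to work entirely with the first rows of $P^{12}$ and $P^{22}$. Writing $p_v:=P^{12}_{1v}$ and $q_v:=P^{22}_{1v}$ for $1\le v\le n-d_2$, a direct computation from the definitions gives the identity $q_v=2p_v-1$, together with the base values $p_1=2$, $q_1=3$, $p_2=4$, $q_2=7$ (using $\overline{a_{d_1}}=1$, $\overline{a_{d_2}}=3$, $\overline{a_{d_2+1}}=4$ from Proposition \ref{prop2}) and the increment $p_{v+1}-p_v=\overline{a_{d_2+v}}/2$. Thus (\ref{*}) is equivalent to the assertion that for every $v$ both $p_v+1$ and $p_v+2$ belong to $\{p_w\}\cup\{q_w\}$. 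Since $E_{\lambda}$ is Ulrich, Theorem \ref{FN} gives $T_X^{\lambda}=\{1,\ldots,\dim X\}$ with pairwise distinct entries, and one checks $p_v+2\le\dim X$; hence $p_v+1$ and $p_v+2$ certainly occur somewhere in $T_X^{\lambda}$, and the whole content of the claim is to show that they occur in these two rows and nowhere else.

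First I would exploit parity. By Lemmas \ref{lcm1D} and \ref{lcm2D} the matrices $P^{11}$, $P^{22}$, $Q^{11}$ and $Q^{22}$ have only odd entries, so the even member of the consecutive pair $p_v+1,\,p_v+2$ can live only in $P^{12}$, $Q^{12}$ or $R^1$. For this even value I would then eliminate $Q^{12}$, $R^1$ and the rows $u\ge 2$ of $P^{12}$: the minimal entries $Q^{12}_{min}$ and $R^1_{min}$ are pinned down explicitly, while the divisibility $12\mid\overline{a_k}$ for $1\le k\le d_1-1$ (Proposition \ref{prop2}(2)) forces $P^{12}_{uv}-P^{12}_{1v}=\tfrac12\sum_{k=d_1-u+1}^{d_1-1}\overline{a_k}$ to be a positive multiple of $6$, so every lower row of $P^{12}$ exceeds its first row by at least $6$ in each column; combined with the increment formula this separates the candidate value from all non-first-row entries, leaving $P^{12}(1,~)$ as its only home. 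For the odd member I would argue symmetrically, using the explicit small entries of $Q^{11}$, $Q^{22}$ and $P^{11}$ together with the relation $q_w=2p_w-1$ to confine it to some $p_w$ or $q_w$, and invoking distinctness of the entries of $T_X^{\lambda}$ to exclude collisions.

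The genuine obstacle I anticipate is the simultaneous control of $Q^{12}$ and $R^1$, the only matrices besides $P^{12}$ that carry even entries in the relevant range: their entries sweep across a wide interval, so crude size bounds do not suffice, and one must feed in the arithmetic constraints $4\mid\overline{a_n}-\overline{a_{n-1}}$ and $12\mid\overline{a_k}$ from Proposition \ref{prop2} to kill the values $p_v+1,\,p_v+2$ case by case. This is where an induction on $v$ seems unavoidable: having already placed $p_1,\ldots,p_v$ and $q_1,\ldots,q_v$, one uses distinctness to forbid a collision such as $p_{v+1}=q_w$ and integrality of the relevant differences to pin the increment $\overline{a_{d_2+v}}/2$, thereby forcing $p_v+1$ and $p_v+2$ back into $\{p_w\}\cup\{q_w\}$. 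Keeping this bookkeeping consistent across all columns, and checking that none of the excluded locations can secretly absorb one of the two required integers, is the delicate part of the argument.
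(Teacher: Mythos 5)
Your skeleton is the paper's: the same induction along $P^{12}(1,~)$, the same relation $P^{22}_{1v}=2P^{12}_{1v}-1$ with base values $2,3,4,7$ drawn from Proposition \ref{prop2}, the same parity splitting, and the same list of matrices to be excluded using $12\mid\overline{a_k}$, $4\mid\overline{a_n}-\overline{a_{n-1}}$ and minimal-entry estimates. The problem is that the mechanisms you assert for the decisive exclusions are not just unexecuted but insufficient as stated. Your claim that the divisibility $12\mid\overline{a_k}$ ``separates the candidate value from all non-first-row entries'' of $P^{12}$ fails: if $x+2=P^{12}_{21}$ then $\overline{a_{d_1-1}}=2x$, which is perfectly compatible with $12\mid\overline{a_{d_1-1}}$ whenever $6\mid x$. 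The paper kills this case by a different idea, a forced duplicate entry: using $m_1=\dim X$, $m_2=\dim X-2$ from Proposition \ref{prop2}(1) it computes $Q^{12}_{d_1-1,v}=\frac{\dim X+3}{2}=P^{12}_{d_1,1}$, contradicting distinctness of the entries of $T_X^{\lambda}$. The exclusion of $x+1$ from $P^{11}$ (which gives $\overline{a_{d_1-1}}=x$, again arithmetically consistent when $12\mid x$) is likewise settled by a collision, via $P^{12}_{21}=\frac{x+4}{2}\le x$ together with the induction hypothesis, not by congruences.

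The most serious gap is in your treatment of $Q^{12}$. In the configuration $x=P^{12}_{1,n-d_2}$ with $\overline{a_n}-\overline{a_{n-1}}=4$, the equality $x+2=Q^{12}_{min}$ is consistent with every congruence you invoke ($4\mid\overline{a_n}-\overline{a_{n-1}}$ holds by construction), so no amount of case-by-case arithmetic closes it. The paper disposes of it by a counting argument of a different nature: that configuration forces $\dim X=4x+1$, while the gap bound coming from the induction hypothesis (consecutive entries of $P^{12}(1,~)$ below $x$ differ by at most $2$, hence $n-d_2\ge \frac{x}{2}$) combined with $d_1\ge 3$ gives $\dim X=|T_X^{\lambda}|=2(d_1+1)(n-d_2)+2d_1+\frac{d_1(d_1-1)}{2}\ge 4x+9$, a contradiction; the remaining subcases ($\overline{a_n}=2$ or $4$) are closed by showing $x+2$, respectively $x+4$, appears nowhere in $T_X^{\lambda}$, again through collisions. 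Your proposal never anticipates this cardinality count, nor the duplicate-entry device, and these are the two load-bearing ideas. Finally, note that for odd $x$ the paper needs no matrix exclusions at all: the induction hypothesis yields that every integer $y<x$ lies in $P^{12}(1,~)$ or $P^{22}(1,~)$ and that gaps in $P^{22}(1,~)$ below $2x-1$ are at most $4$, and a purely combinatorial argument places $x+1$ and $x+2$; your plan to argue ``symmetrically'' by elimination for the odd member is heavier and would again run into the cases above.
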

\begin{proof}
We prove the proposition by induction on $x$. By the proof of Proposition \ref{prop1}, we know that the smallest element $P^{12}_{11}$ in $P^{12}(1,~)$ is $2$. From the proof of Proposition \ref{prop2}, we find that $3=\overline{a_{d_2}}\in P^{22}(1,~)$ and $4=P^{12}_{12}\in P^{12}(1,~) $. Therefore, $2$ satisfies the assertion (\ref{*}). Suppose the assertion (\ref{*}) is true for any $y$ with $y<x$ and $y\in P^{12}(1,~)$. Since all entries of the matrices $P^{ii}$ and $Q^{ii}~(i=1,2)$ are odd, at least one of $y+1$ and $y+2$ must lie in $P^{12}(1,~)$ by induction. So we say that the induction hypothesis implies the following two consequences:
\begin{itemize}
	\item[$\normalsize{\textcircled{\scriptsize{1}}}$] For any $y~(y<x)$, we have $y\in P^{12}(1,~)$ or $y\in P^{22}(1,~)$. 
	\item[$\normalsize{\textcircled{\scriptsize{2}}}$] Let $y_1$ and $y_2$ be two adjacent integers in $P^{12}(1,~)$. If $y_1$ and $y_2$ are less than $x$, then $y_1-y_2$ is less than or equal to $2$. 
\end{itemize}
As any integer $x\in P^{12}(1,~)$ corresponds to an integer $2x-1\in P^{22}(1,~)$,  $\normalsize{\textcircled{\scriptsize{2}}}$ is equivalent to say that the difference between any two adjacent integers in $P^{22}(1,~)$ less than $2x-1$ is not more than $4$. Next, we prove the assertion (\ref{*}) is also true for $x\in P^{12}(1,~)$. \\

\textbf{Case 1:} Suppose $x$ is odd. Then $x-1$ is even. According to the assertion $\normalsize{\textcircled{\scriptsize{1}}}$, we have $x-1\in P^{12}(1,~)$ and thus by the induction hypothesis $x+1\in P^{12}(1,~)$ or $P^{22}(1,~)$. On the other hand, by $\normalsize{\textcircled{\scriptsize{1}}}$, we know $x-2\in P^{12}(1,~)$ or $P^{22}(1,~)$. Since $x$ is odd and $x\in P^{12}(1,~)$, we must have $x-2\in P^{22}(1,~)$, otherwise there would be two adjacent odd numbers $a~(a\le x-4)$ and $b~(b\ge x+2)$ in $P^{22}(1,~)$, which contradicts to the assertion $\normalsize{\textcircled{\scriptsize{2}}}$ that $b-a\le 4$. Using the assertion $\normalsize{\textcircled{\scriptsize{2}}}$ again, we find that $x+2\in P^{22}(1,~)$, because $x-2\in P^{22}(1,~)$ and $x\in P^{12}(1,~)$. Thus, if $x$ is odd, the assertion (\ref{*}) is indeed true for $x$.\\

\textbf{Case 2:} Suppose $x$ is even. We can assume that $x\ge 4$. In order to prove the assertion (\ref{*}), it suffices to show that neither $x+1$ nor $x+2$ appear as entries of $Q^{11}$, $Q^{22}$, $R^1$, $P^{11}$, $P^{12}(2,~)$ and $Q^{12}$. Let's prove it in the following steps. 

\textbf{Step 2.1:} We claim that neither $x+1$ nor $x+2$ appear as entries of $Q^{11}$, $Q^{22}$ and $R^1$.

To prove the claim, it suffices to show that the smallest entry of $Q^{11}$, $Q^{22}$ or $R^1$ is greater than $x+2$. 
Since $x\in P^{12}(1,~)$, we can write $x$ as $P^{12}_{1,v}=\frac{\sum_{k=d_1}^{d_2+v-1}\overline{a_k}}{2}$, where $1\le v\le n-d_2$. If $v<n-d_2$, then $\sum_{k=d_1}^{n-2}\overline{a_k}\ge 2x$. By Proposition \ref{prop2}, we get $\overline{a_{d_1-1}}\ge 12$, $\overline{a_{d_1}}=1$, $\overline{a_n}\ge 2$ and $\overline{a_{n-1}}+\overline{a_n}\ge 8$.
Thus, we have
\begin{align*}
&Q^{11}_{min}=\frac{\sum_{k=d_1}^{n-2}\overline{a_k}+\sum_{k=d_2}^{n}\overline{a_k}}{3}=\frac{2\sum_{k=d_1}^{n-2}\overline{a_k}-\overline{a_{d_1}}+\overline{a_{n-1}}+\overline{a_n}}{3}\ge \frac{4x-1+8}{3}>x+3,\\
&Q^{22}_{min}=\sum_{k=d_2}^{n-2}\overline{a_k}+\overline{a_n}=\sum_{k=d_1}^{n-2}\overline{a_k}-\overline{a_{d_1}}+\overline{a_n}\ge 2x-1+2\ge x+5,\\
&R^{1}_{min}=\frac{\sum_{k=d_1-1}^{n-2}\overline{a_k}+\sum_{k=d_1}^{n}\overline{a_k}}{4}=\frac{\overline{a_{d_1-1}}+2\sum_{k=d_1}^{n-2}\overline{a_k}+\overline{a_{n-1}}+\overline{a_n}}{4}\ge \frac{12+4x+8}{4}=x+5.	
\end{align*}
For the first two inequalities, we use the assumption $x\ge 4$. If $v=n-d_2$, then $\sum_{k=d_1}^{n-1}\overline{a_k}=2x$. Similarly, we have 
\begin{align*}
	&Q^{11}_{min}=\frac{2\sum_{k=d_1}^{n-1}\overline{a_k}-\overline{a_{d_1}}+\overline{a_n}-\overline{a_{n-1}}}{3}\ge\frac{4x-1+4}{3}>x+2,\\
	&Q^{22}_{min}=\sum_{k=d_1}^{n-1}\overline{a_k}-\overline{a_{d_1}}+\overline{a_n}-\overline{a_{n-1}}\ge 2x-1+4\ge x+7,\\
	&R^{1}_{min}=\frac{\overline{a_{d_1-1}}+2\sum_{k=d_1}^{n-1}\overline{a_k}+\overline{a_n}-\overline{a_{n-1}}}{4}\ge \frac{12+4x+4}{4}=x+4.	
\end{align*}
Here we use the inequalities $\overline{a_n}-\overline{a_{n-1}}\ge 4$ and $x\ge 4$. \\

\textbf{Step 2.2:} We claim that neither $x+1$ nor $x+2$ appear as entries of $P^{11}$ and $P^{12}(2,~)$.

First of all because all entries of $P^{11}$ are odd, we have $x+2\notin P^{11}$, as $x$ is even. Assume $x+1\in P^{11}$, then $x+1$ is the second smallest element in $P^{11}$, i.e., $x+1=P^{11}_{21}=\sum_{k=d_1-1}^{d_1}\overline{a_k}$, which implies that $\overline{a_{d_1-1}}=x$. 
Then $P^{12}_{21}=\frac{\sum_{k=d_1-1}^{d_2}\overline{a_k}}{2}=\frac{x+4}{2}$. Since $x\ge 4$, we have $\frac{x+4}{2}\le x$. However, by the assertion $\normalsize{\textcircled{\scriptsize{1}}}$, $\frac{x+4}{2}$ should lie in $P^{12}(1,~)$ or $P^{22}(1,~)$. This is a contradiction. Hence neither $x+1$ nor $x+2$ are entries of $P^{11}$.

Further, we claim neither $x+1$ or $x+2$ are entries of $P^{12}(2,~)$. Assume $x+1\in P^{12}(2,~)$, then $x+1$ is the smallest entry in $P^{12}(2,~)$, i.e., $x+1=P^{12}_{21}=\sum_{k=d_1-1}^{d_2}\overline{a_k}$,  which implies that $\overline{a_{d_1-1}}=2x-2$. However, this contradicts that $\overline{a_{d_1-1}}$ is divisible by $12$ (see Proposition \ref{prop2}), because $x$ is even. Assume $x+2\in P^{12}(2,~)$, similarly we have $\overline{a_{d_1-1}}=2x$. Suppose $x=P^{12}_{1,v}=\frac{\sum_{k=d_1}^{d_2+v-1}\overline{a_k}}{2}$, then $\sum_{k=d_1}^{d_2+v-1}\overline{a_k}=2x$. Hence
\begin{align*}
Q^{12}_{d_1-1,v}&=\frac{\sum_{k=d_1-1}^{n-2}\overline{a_k}+\sum_{k=d_2+v}^{n}\overline{a_k}}{2}\\
&=\frac{\sum_{k=d_2}^{n-2}\overline{a_k}+\sum_{k=d_2+1}^{n}\overline{a_k}+\overline{a_{d_1-1}}+2\overline{a_{d_1}}+\overline{a_{d_2}}-\sum_{k=d_1}^{d_2+v-1}\overline{a_k}}{2}\\
&=\frac{\dim X-2+2x+5-2x}{2}=\frac{\dim X+3}{2}.
\end{align*}
However, on the other hand, we have 
$$P^{12}_{d_1,1}=\frac{\sum_{k=1}^{d_2}\overline{a_k}}{2}=\frac{\sum_{k=1}^{d_1}\overline{a_k}+\overline{a_{d_2}}}{2}=\frac{\dim X+3}{2}.$$
Note in the above equalities, we use the assertions in Proposition \ref{prop2} (1). This leads us to the conclusion that $x+2\notin P^{12}(2,~)$, because two identical numbers $Q^{12}_{d_1-1,v}$ and $P^{12}_{d_1,1}$ appear in $T_X^{\lambda}$.\\	

\textbf{Step 2.3:} We claim that neither $x+1$ nor $x+2$ appear as entries of $Q^{12}$.

We prove this claim by contradiction. If the claim is not true, then $x+1$ or $x+2$ must be equal to $Q^{12}_{min}$. So we begin by estimating the value of $Q^{12}_{min}$. Let us write $x$ as $P^{12}_{1,v}$. If $v<n-d_2$, then $\sum_{k=d_1}^{n-2}\overline{a_k}\ge 2x$. It follows that
$$Q^{12}_{min}=\frac{\sum_{k=d_1}^{n-2}\overline{a_k}+\overline{a_n}}{2}\ge\frac{2x+2}{2}=x+1.$$
If $v=n-d_2$, then $\sum_{k=d_1}^{n-1}\overline{a_k}=2x$. It follows that
$$Q^{12}_{min}=\frac{\sum_{k=d_1}^{n-2}\overline{a_k}+\overline{a_n}}{2}=\frac{\sum_{k=d_1}^{n-1}\overline{a_k}+\overline{a_n}-\overline{a_{n-1}}}{2}\ge\frac{2x+4}{2}=x+2.$$
From the above two inequalities, we can derive the following:
\begin{itemize}
	\item If $x+1$ is $Q^{12}_{min}$, then $x=\frac{\sum_{k=d_1}^{n-2}\overline{a_k}}{2}=P^{12}_{1,n-d_2-1}$ and $\overline{a_n}=2$.
	\item If $x+2$ is $Q^{12}_{min}$, then there are three possibilities:
	
	(1) $x=\frac{\sum_{k=d_1}^{n-1}\overline{a_k}}{2}=P^{12}_{1,n-d_2}$ and $\overline{a_n}-\overline{a_{n-1}}=4$,
	
	(2) $x=\frac{\sum_{k=d_1}^{n-2}\overline{a_k}}{2}=P^{12}_{1,n-d_2-1}$ and $\overline{a_n}=4$,
	
	(3) $x+1=\frac{\sum_{k=d_1}^{n-2}\overline{a_k}}{2}=P^{12}_{1,n-d_2-1}$ and $\overline{a_n}=2$.
\end{itemize}
We will show that none of these possibilities exist.

 Suppose $x+1=Q^{12}_{min}$. Then we rule out this possibility by proving $x+2$ does not appear in the associated datum $T_X^{\lambda}$. Assume $x+2$ appears in $T_X^{\lambda}$, then $x+2\in P^{12}$, $Q^{12}$ or $R^1$, as $x+2$ is even. By the proof in Step 2.1, we know $R^1_{min}\ge x+4$ and hence $x+2\notin R^1$. Assume $x+2\in P^{12}$, then according to $x=P^{12}_{1,n-d_2-1}$, we have $x+2=P^{12}_{1,n-d_2}$
 or $x+2=P^{12}_{21}$.
 If the former happens, then $\overline{a_{n-1}}=4$, which contradicts $\overline{a_n}-\overline{a_{n-1}}$  being divisible by $4$. If the latter happens, then $\overline{a_{d_1-1}}=2x$. In this case, we would have $P^{11}_{21}=Q^{12}_{d_1-1,n-d_2}=2x+1$, which is a contradiction. Hence $x+2\notin P^{12}$. Assume $x+2\in Q^{12}$, then according to $x+1=Q^{12}_{min}=Q^{12}_{d_1,n-d_2}$, we have $x+2=Q^{12}_{d_1-1,n-d_2}$
or $x+2=Q^{12}_{d_1,n-d_2-1}$. 
If the former happens, then $\overline{a_{d_1-1}}=2$, which contradicts $\overline{a_{d_1-1}}$ being divisible by $12$. If the latter happens, then $\overline{a_{n-1}}=2$. In this case, we would have $\overline{a_{n-1}}=\overline{a_n}$, which is a contradiction. Hence $x+2\notin Q^{12}$. In summary, $x+2$ does not appear in $T_X^{\lambda}$. \\

Suppose $x+2=Q^{12}_{min}$. Then we first rule out the possibility (1) by calculating the dimension of $X$. On the one hand, according to the possibility (1) and Proposition \ref{prop2}, we have
$$\dim X-2=\sum_{k=d_2}^{n-2}\overline{a_k}+\sum_{k=d_2+1}^{n}\overline{a_k}=2\sum_{k=d_1}^{n-1}\overline{a_k}-2\overline{a_{d_1}}-\overline{a_{d_2}}+\overline{a_n}-\overline{a_{n-1}}=4x-1.$$
Hence $\dim X=4x+1$. On the other hand, from the assertion $\normalsize{\textcircled{\scriptsize{2}}}$ and the equalities $2=P^{12}_{11}$, $x=P^{12}_{1,n-d_2}$, we can deduce that $n-d_2\ge \frac{x}{2}$. This together with $d_1\ge 3$ gives us $\dim X=|T_X^{\lambda}|=2(d_1+1)(n-d_2)+2d_1+\frac{d_1(d_1-1)}{2}\ge 4x+9$. Thus, the possibility (1) cannot happen. We next rule out the possibilities (2) and (3) by proving $x+4$ does not appear in the associated datum $T_X^{\lambda}$. Assume $x+4$ appears in $T_X^{\lambda}$, then $x+4\in P^{12}$, $Q^{12}$ or $R^1$. For the possibilities (2) and (3), since $x\ne P^{12}_{1,n-d_2}$, $R^1_{min}\ge x+5$ by the proof in Step 2.1, hence $x+4\notin R^1$. Further, since $x+2=Q^{12}_{min}=Q^{12}_{d_1,n-d_2}$ and $\overline{a_{d_1-1}}\ge 12$, $Q^{12}_{d_1-1,n-d_2}\ge x+8$. Hence if $x+4\in Q^{12}$, then we must have $x+4=Q^{12}_{d_1,n-d_2-1}$ or $x+4=Q^{12}_{d_1,n-d_2-2}$, $x+3=Q^{12}_{d_1,n-d_2-1}$. If the former happens, then $\overline{a_{n-1}}=4$. If the latter happens, then $\overline{a_{n-1}}=2$. Because $\overline{a_n}$ is $4$ or $2$, whether the former or the latter occurs would contradict to  
 $\overline{a_n}-\overline{a_{n-1}}$  being divisible by $4$.
 Assume $x+4\in P^{12}$, then 
 $x+4=P^{12}_{1,n-d_2}$
or $x+4=P^{12}_{21}$. 
If the former happens, then $\sum_{k=d_1}^{n-1}\overline{a_k}=2x+8$ and $\overline{a_n}-\overline{a_{n-1}}=4$ for both possibilities (2) and (3). Using Proposition \ref{prop2}, we get 
 $\dim X=4x+9$. However, on the other hand, since $x\ne P^{12}_{1,n-d_2}$, we have $n-d_2\ge \frac{x}{2}+1$ and hence 
 $\dim X=|T_X^{\lambda}|\ge 4x+17$.
 This leads to a contradiction. 
If the latter happens, then $\overline{a_{d_1-1}}=2x+4$. Write $x$ as $P^{12}_{1,v}$. Using the similar argument in Step 2.2, we would get two identical numbers $Q^{12}_{d_1-1,v}=P^{12}_{d_1,2}=\frac{\dim X+7}{2}$, which is a contradiction. To sum up, none of the above possibilities exist. Then we are done.
\end{proof}

\textbf{Proof of Theorem \ref{mainthm} for type $D_n$ and $s=2$, $d_s\le n-2$:}	Suppose there exists an Ulrich bundle $E_{\lambda}$ on $X$. Let $T_X^{\lambda}$ be the associated datum  of $E_{\lambda}$. Then any integer between $1$ and $\dim X$ should appear in $T_X^{\lambda}$. In particular, for the integer $x=P^{12}_{1,n-d_2}$, $x+1$ and $x+2$ should appear as entries of $P^{12}(1,~)$ or $P^{22}(1,~)$ by Proposition \ref{prop3}. Since one of the integers $x+1$ and $x+2$ is even, at least one of them would lie in $P^{12}(1,~)$, as every entry in $P^{22}(1,~)$ is odd. However, this is obvious impossible, because $x$ is the largest entry in $P^{12}(1,~)$ by our choice. Therefore we conclude that there are no irreducible homogeneous Ulrich bundles on $X$.

\subsubsection{$J=\{d_1,\ldots,d_s\}$ with $d_s=n-1$ or $d_s=n,~d_{s-1}\ne n-1$ }\label{}
Let $0<d_1,\ldots,d_{s-1}< n-1$ be an increasing sequence of integers.
In type $D_n$, note that $G/P_{\{d_1,\ldots,d_{s-1},n-1\}}$ is isomorphic to $G/P_{\{d_1,\ldots,d_{s-1},n\}}$ as a projective variety.
Therefore, we only consider the existence problem for the case $G$ is of type $D_n$ and $J=\{d_1,\ldots,d_s\}$ with $d_s=n-1$  in this paper.

According to \cite[Section 3.2.1 III Case (b)]{Fang}, the associated datum $T_X^{\lambda}$ of $E_{\lambda}$ is of the following form. (By convention we set $d_0=0$ and $d_{s+1}=n$.)

$T_X^{\lambda}=\{\tilde{P}^{ij}~(1\le i\le j\le s), \tilde{Q}^{ij}~(1\le i\le j\le s,~i\ne s),\tilde{R}^{i}~(1\le i\le s)\}$, where
\begin{align*}
	&\tilde{P}^{ij}_{uv}=\frac{\sum\limits_{k=d_i-u+1}^{d_j+v-1}\overline{a_k}}{j-i+1}~(1\le u\le d_i-d_{i-1},1\le v\le d_{j+1}-d_j);\\
	&\tilde{Q}^{ij}_{uv}=\frac{\sum\limits_{k=d_{i-1}+u}^{n-2}\overline{a_k}+\sum\limits_{k=d_j+v}^{n}\overline{a_k}}{2s-(i+j)}
	~(1\le u\le d_i-d_{i-1},1\le v\le d_{j+1}-d_j);\\
	&\tilde{R}^{i}_{uv}=\frac{\sum\limits_{k=d_{i-1}+u}^{n-2}\overline{a_k}+\sum\limits_{k=d_{i-1}+v}^{n}\overline{a_k}}{2(s-i)+1}~(1\le u< v\le d_i-d_{i-1}).
	\end{align*}
Note that the matrices $\tilde{Q}^{ij}$ appear in $T_X^{\lambda}$ only if $s$ is greater than $1$.

Firstly, in order to show the main theorem, we prepare some lemmas like Lemma \ref{lcm1D} and \ref{lcm2D}.

\begin{lem}\label{lcm3D}
	Let $E_{\lambda}$ be an irreducible homogeneous Ulrich bundle on $X$ with highest weight $\lambda=a_1\lambda_1+\cdots+a_n\lambda_n$. 
	\begin{itemize}
		\item[(i)] For any $1\le k\le d_1-1$, we have
		$lcm(1,2,\ldots, 2s-2)\mid\overline{a_k}$.
		\item[(ii)] For any integer $2\le i\le s$ and $d_{i-1}+1\le k\le d_{i}-1$, we have
		$$lcm(1,2,\ldots, 2s-2i)\mid\overline{a_k}~\text{and}~lcm(2s-2i+2,2s-2i+3,\ldots, 2s-i)\mid\overline{a_k}.$$
		\item[(iii)] If $s\ge 2$, then  $lcm(1,2,\ldots,s)\mid\overline{a_n}$.
	\end{itemize}
\end{lem}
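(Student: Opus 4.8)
The plan is to follow the template of Lemma \ref{lemma1} and Lemma \ref{lcm1D}. Since $E_{\lambda}$ is Ulrich, Theorem \ref{FN} forces every entry of $T_X^{\lambda}$ to be an integer, so the difference of any two entries lying in the same matrix is again an integer. I would extract the divisibility conditions by differencing adjacent entries of $\tilde{P}^{ij}$ and $\tilde{Q}^{ij}$ along rows and columns. Concretely, the row differences $\tilde{P}^{ij}_{u+1,1}-\tilde{P}^{ij}_{u,1}=\frac{\overline{a_{d_i-u}}}{j-i+1}$ and $\tilde{Q}^{ij}_{u,1}-\tilde{Q}^{ij}_{u+1,1}=\frac{\overline{a_{d_{i-1}+u}}}{2s-(i+j)}$ show that $j-i+1$ and $2s-(i+j)$ both divide the relevant $\overline{a_k}$, while the column difference $\tilde{Q}^{i',i-1}_{1,u}-\tilde{Q}^{i',i-1}_{1,u+1}=\frac{\overline{a_{d_{i-1}+u}}}{2s+1-i-i'}$ supplies the factors coming from the blocks to the left.

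For part (i), taking $i=1$ and letting $j$ run from $1$ to $s$ gives $lcm(1,\ldots,s)\mid\overline{a_k}$ from $\tilde{P}^{1j}$ and $lcm(s-1,\ldots,2s-2)\mid\overline{a_k}$ from $\tilde{Q}^{1j}$; their union is the continuous range $\{1,\ldots,2s-2\}$, which yields (i). For part (ii), the same two families with $2\le i\le s$ give $lcm(1,\ldots,2s-2i)\mid\overline{a_k}$, and the left-block differences $\tilde{Q}^{i',i-1}$ with $1\le i'\le i-1$ give $lcm(2s-2i+2,\ldots,2s-i)\mid\overline{a_k}$. The only missing factor is $2(s-i)+1$, which is precisely the denominator of $\tilde{R}^i$. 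As in Lemma \ref{lcm1D}, this factor cannot be recovered, because $\tilde{R}^i$ is strictly upper triangular and hence reduces to a single entry whenever $d_i-d_{i-1}=2$, leaving no pair of adjacent entries in a column to difference. This is exactly why (ii) is phrased as two separate $lcm$'s with a gap at $2(s-i)+1$.

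The genuinely new step is part (iii), and it is where I expect the main difficulty. The argument used in Lemma \ref{lcm1D}(iii) does not transfer directly, because with $d_s=n-1$ the denominators of $\tilde{Q}^{is}$ and $\tilde{P}^{is}$ are $s-i$ and $s-i+1$ and no longer coincide. The fix is to pair $\tilde{P}^{is}$ instead with $\tilde{Q}^{i,s-1}$, whose denominator $2s-(i+s-1)=s-i+1$ matches. Taking the extreme entries one finds $\tilde{P}^{is}_{11}=\frac{\sum_{k=d_i}^{n-1}\overline{a_k}}{s-i+1}$ and $\tilde{Q}^{i,s-1}_{\,d_i-d_{i-1},\,d_s-d_{s-1}}=\frac{\sum_{k=d_i}^{n-1}\overline{a_k}+\overline{a_n}}{s-i+1}$, so their difference equals $\frac{\overline{a_n}}{s-i+1}$. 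Since $s\ge 2$ guarantees that $\tilde{Q}^{i,s-1}$ exists, letting $i$ run from $1$ to $s-1$ forces $(s-i+1)\mid\overline{a_n}$ for every such $i$, that is, $lcm(1,\ldots,s)\mid\overline{a_n}$.

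The main obstacle is thus isolating $\overline{a_n}$ in part (iii): one must observe that $\overline{a_n}$ appears only in the last column of the $\tilde{Q}$-matrices, and that the correct denominator-matching pair is $(\tilde{Q}^{i,s-1},\tilde{P}^{is})$ rather than the $(\tilde{Q}^{is},\tilde{P}^{is})$ pairing that worked when $d_s\le n-2$. The remaining work is routine bookkeeping: checking that every entry I invoke lies in its admissible index range ($1\le u\le d_i-d_{i-1}$, $1\le v\le d_{j+1}-d_j$ for $\tilde{P}$ and $\tilde{Q}$, and $1\le u<v\le d_i-d_{i-1}$ for $\tilde{R}$), and verifying that the $lcm$-ranges glue together exactly as stated, with the single unavoidable gap at $2(s-i)+1$.
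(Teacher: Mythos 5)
Your proposal is correct and follows essentially the same route as the paper: parts (i) and (ii) by differencing adjacent entries of $\tilde{P}^{ij}$ and $\tilde{Q}^{ij}$ exactly as in Lemma \ref{lcm1D} (including the correctly identified unavoidable gap at $2(s-i)+1$ from $\tilde{R}^i$), and part (iii) by the denominator-matched pairing of $\tilde{Q}^{i,s-1}$ with $\tilde{P}^{is}$, which is precisely the paper's key step $\tilde{Q}^{i,s-1}_{1,d_s-d_{s-1}}-\tilde{P}^{is}_{d_i-d_{i-1},1}=\frac{\overline{a_n}}{s+1-i}$. You merely pick the mirrored pair of entries ($\tilde{Q}^{i,s-1}_{d_i-d_{i-1},d_s-d_{s-1}}-\tilde{P}^{is}_{11}$), which yields the same difference $\frac{\overline{a_n}}{s+1-i}$ and hence the same conclusion $lcm(1,\ldots,s)\mid\overline{a_n}$.
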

\begin{proof}
	The proof of (i) and (ii) is similar to the proof of Lemma \ref{lcm1D}. 
	For the statement (iii), let us compare the difference 
	$$\tilde{Q}^{i,s-1}_{1,d_s-d_{s-1}}-\tilde{P}^{is}_{d_i-d_{i-1},1}=\frac{\overline{a_n}}{s+1-i}.$$
	Since $E_{\lambda}$ be an Ulrich bundle, the above difference should be an integer and hence $(s+1-i)\mid\overline{a_n}$ for any $1\le i\le s-1$. Then we are done.
\end{proof}

Using Lemma \ref{lcm3D}, we conclude the following immediately. 
\begin{lem}\label{lcm4D}
	With the same assumption as above, then $\overline{a_{d_1}},\overline{a_{d_2}},\ldots,\overline{a_{d_s}}$ are $s$ different odd numbers.
\end{lem}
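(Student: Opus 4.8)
The plan is to mimic the two-step argument used for types $B_n$ and $C_n$ in Lemmas~\ref{lemma2} and \ref{lemma4}: first establish that every non-corner coefficient $\overline{a_k}$ (i.e.\ $k\notin\{d_1,\dots,d_s\}$) is even, then use the size-two blocks $\tilde{P}^{i,i+1}$ to force a common parity on the corner coefficients $\overline{a_{d_1}},\dots,\overline{a_{d_s}}$, and finally pin that parity down to odd via the smallest entry of $T_X^{\lambda}$. For $s=1$ the statement is vacuous, since $\overline{a_{d_1}}=\tilde{P}^{11}_{min}$ is the unique smallest entry and equals $1$; so I assume $s\ge 2$ throughout.

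First I would check the evenness of the non-corner coefficients directly from Lemma~\ref{lcm3D}. For $1\le k\le d_1-1$ this is immediate, since $2\mid lcm(1,\dots,2s-2)$ when $s\ge 2$; for $d_{i-1}+1\le k\le d_i-1$ with $2\le i\le s-1$ it follows from the first divisibility in part~(ii), while for $i=s$ the first $lcm$ degenerates and one instead uses the second divisibility $lcm(2,\dots,s)\mid\overline{a_k}$; lastly $\overline{a_n}$ is even by part~(iii). The one place that needs attention is this boundary index $i=s$, where $2s-2i=0$ and one must switch to the second statement of Lemma~\ref{lcm3D}(ii).

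Next, for each $1\le i\le s-1$ the entry $\tilde{P}^{i,i+1}_{min}=\tilde{P}^{i,i+1}_{11}=\frac{1}{2}\sum_{k=d_i}^{d_{i+1}}\overline{a_k}$ lies in $T_X^{\lambda}$ and hence is an integer by Theorem~\ref{FN}. Since all interior summands $\overline{a_k}$ with $d_i<k<d_{i+1}$ are even by the previous paragraph, integrality is equivalent to $\overline{a_{d_i}}+\overline{a_{d_{i+1}}}$ being even. Thus consecutive (and therefore all) corner coefficients share a single parity.

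Finally I would invoke \cite[Remark~3.4]{Fang}, which gives that $\min_{1\le i\le s}\{\overline{a_{d_i}}\}$ is the smallest entry of $T_X^{\lambda}$; as $E_{\lambda}$ is Ulrich we have $T_X^{\lambda}=\{1,2,\dots,\dim X\}$, so this minimum is $1$. Hence the common parity is odd and every $\overline{a_{d_i}}$ is odd. Distinctness is then immediate, since $\overline{a_{d_i}}=\tilde{P}^{ii}_{min}$ is an entry of $T_X^{\lambda}$ and all entries of the datum of an Ulrich bundle are distinct. The only genuine bookkeeping is confined to the first step (the $i=s$ boundary together with the contribution of $\overline{a_n}$); the remaining steps are identical to the type $B_n$/$C_n$ arguments, which is why the authors can state the result as an immediate consequence of Lemma~\ref{lcm3D}.
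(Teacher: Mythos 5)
Your proof is correct and takes essentially the same route the paper intends: the paper states Lemma \ref{lcm4D} as an immediate consequence of Lemma \ref{lcm3D}, meaning exactly the Lemma \ref{lemma2}-style argument you give (evenness of all non-corner $\overline{a_k}$ including $\overline{a_n}$, common parity of the $\overline{a_{d_i}}$ via integrality of $\tilde{P}^{i,i+1}_{11}=\frac{1}{2}\sum_{k=d_i}^{d_{i+1}}\overline{a_k}$, minimum equal to $1$ by \cite[Remark 3.4]{Fang}, and distinctness because each $\overline{a_{d_i}}=\tilde{P}^{ii}_{min}$ is an entry of $T_X^{\lambda}$). Your bookkeeping at the boundary index $i=s$ of Lemma \ref{lcm3D}(ii), and your observation that here, unlike in Lemma \ref{lemma4}, no separate difference computation is needed for the last corner since $\tilde{P}^{s-1,s}$ exists when $d_s=n-1$, is precisely what the paper leaves implicit.
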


Next, we will prove the following proposition.
\begin{prop}\label{d_i=i}
	Let $E_\lambda$ be an irreducible homogeneous Ulrich bundle on $X$ with $\lambda=\sum_{i=1}^{n}a_i \lambda_i$. If $s\ge 2$,
	then we have $d_{i}=d_{i-1}+1$ for every $1\leq i \leq s$.
	In particular, we obtain $d_i=i$ for every $1\leq i \leq s$.
\end{prop}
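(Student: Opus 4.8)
The plan is to exploit the Ulrich criterion (Theorem \ref{FN}), which says $T_X^\lambda=\{1,2,\ldots,\dim X\}$, so that every integer in this range occurs exactly once among the entries of the datum. Writing $b_i:=\overline{a_{d_i}}$, recall from Lemma \ref{lcm4D} that $b_1,\ldots,b_s$ are distinct odd integers with $\min_i b_i=1$, while Lemma \ref{lcm3D} gives that every intermediate weight $\overline{a_k}$ (with $d_{i-1}<k<d_i$) is even and that $\overline{a_n}$ is even. Since $d_0=0$, the equalities $d_i=d_{i-1}+1$ immediately yield $d_i=i$, so the whole content of the statement is that no block has a gap. I would record the convenient reformulation: block $i$ has a gap, i.e. $d_i-d_{i-1}\ge 2$, if and only if the matrix $\tilde R^i$ is nonempty (its indices run over $1\le u<v\le d_i-d_{i-1}$). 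Thus the goal is to show that every $\tilde R^i$ is empty.

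First I would determine the parity of each entry by counting the odd weights in its numerator. In the numerator of $\tilde P^{ij}_{uv}$ the odd weights are exactly $b_i,\ldots,b_j$, so the numerator is congruent to $j-i+1$ modulo $2$ and $\tilde P^{ij}_{uv}$ is odd whenever $j-i$ is even. The same bookkeeping shows that $\tilde R^i_{uv}$ is always odd, and that $\tilde Q^{ij}_{uv}$ is odd exactly when $i+j$ is odd (here one uses $d_s=n-1$, so that $\overline{a_{n-1}}=b_s$ is odd while $\overline{a_n}$ is even). Consequently every even integer of $T_X^\lambda$, and in particular $2$, can only occur in a block $\tilde P^{ij}$ with $j-i$ odd or a block $\tilde Q^{ij}$ with $i+j$ even.

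Next I would locate $2$. Using the distinct-odd estimate $\sum_{l=i}^{j} b_l\ge (j-i+1)^2$ one gets the uniform bound $\tilde P^{ij}_{uv}\ge j-i+1$, while a similar estimate together with $\overline{a_n}\ge lcm(1,\ldots,s)\ge 2$ shows that every entry of $\tilde Q^{ij}$ with $i+j$ even exceeds $2$. Hence $2=\tilde P^{i_0,i_0+1}_{11}$ for a unique $i_0$, i.e. $b_{i_0}+b_{i_0+1}+\sum_{d_{i_0}<k<d_{i_0+1}}\overline{a_k}=4$; as the two $b$'s are distinct odd and the intermediate terms are even and positive, this forces $d_{i_0+1}=d_{i_0}+1$ and $\{b_{i_0},b_{i_0+1}\}=\{1,3\}$. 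So at least block $i_0+1$ is closed and $1,3$ are realized as $b$-values.

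The main obstacle is to propagate this and close every block. I would argue by induction, establishing that the $b$-values are precisely the consecutive odd numbers $1,3,5,\ldots,2s-1$ and that each associated block has size one: at each stage the parity dichotomy above restricts the next even value to a $\tilde P^{ij}$ (with $j-i$ odd) or $\tilde Q^{ij}$ (with $i+j$ even) entry, and I would rule out all but the gap-closing configurations exactly as in the six-case analysis of Step $3$ in Section \ref{section1} (which applies almost verbatim, since the matrices $\tilde P^{ij}$ here coincide with the $P^{ij}$ there), using the divisibility of Lemma \ref{lcm3D} to discard the remaining cases and the distinctness of entries to forbid repetitions. This forces all $\tilde R^i$ to be empty. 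The delicate point, where I expect to spend most of the effort, is the small Picard number range---above all $s=2$, where the congruences of Lemma \ref{lcm3D} are weak---which I would settle by a direct computation comparing the extremal entries of $\tilde P^{12}$, $\tilde Q^{12}$ and $\tilde R^1$ (an analogue of the relation used in the case $d_s=n$) to exhibit either a missing or a repeated integer. Once no block has a gap, $d_i=d_{i-1}+1$ for all $i$, hence $d_i=i$.
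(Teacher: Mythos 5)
Your parity bookkeeping and your localization of $2$ are sound (modulo one borderline: for $i=j=s-1$ the crude bound gives $\tilde Q^{ij}_{uv}\ge 2$, not $>2$, and you need $b_{s-1}+b_s\ge 4$ from distinctness to finish), but the engine of your proof --- the inductive propagation --- has a genuine gap, in two ways. First, the mechanism cannot close blocks: by your own parity dichotomy, an even integer may be realized in a $\tilde Q^{ij}$ with $i+j$ even, and such a configuration closes no gap; moreover the six-case analysis of Step 3 in Section \ref{section1}, where it applies at all, concludes that $4$ occurs in \emph{no} $\tilde P^{ij}$ --- a nonexistence-type conclusion, not a gap-closing one --- so after the first step your induction has no move that produces ``$d_{i}=d_{i-1}+1$'' for further blocks, and the announced endpoint ($b$-values $1,3,\ldots,2s-1$ with all blocks of size one) is not what that analysis can deliver. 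Second, the analysis does not transfer ``almost verbatim'' before the proposition is known, because Lemma \ref{lcm3D} is strictly weaker than Lemma \ref{lemma1}: for instance, with $s=3$ and $d_{s-2}<k<d_{s-1}$, Lemma \ref{lcm3D}(ii) only forces $2\mid\overline{a_k}$ and $4\mid\overline{a_k}$, so $\overline{a_{d_t-1}}=4$ is not excluded and the Case (1) entry $\tilde P^{t,t+1}_{21}=2+\overline{a_{d_t-1}}/2=4$ survives. The paper invokes the Step-3 analysis in this subsection only \emph{after} the proposition, when all matrices are $1\times1$ and such entries do not exist; using it \emph{to prove} the proposition is circular in exactly these corner cases. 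Your $s=2$ plan is likewise only a declaration of intent.

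What you are missing is that the paper proves the proposition by a single global count, with no induction and no case analysis. Every entry of every $\tilde R^i$ is odd; every entry of $\tilde P^{ij}$ with $j-i$ even is odd; every entry of $\tilde Q^{ij}$ with $j-i$ odd is odd (the same parities you computed, since $j-i\equiv i+j \bmod 2$); and $|\tilde P^{ij}|=|\tilde Q^{ij}|$ whenever both exist, the only unmatched block being $\tilde P^{ss}$ (as $\tilde Q^{ij}$ requires $i\ne s$). Hence in $T_X^\lambda$ the number of odd entries exceeds the number of even entries by at least
\[
\sum_{i=1}^{s}|\tilde R^{i}|+|\tilde P^{ss}|=\sum_{i=1}^{s}\frac{(d_i-d_{i-1})(d_i-d_{i-1}-1)}{2}+(d_s-d_{s-1}),
\]
while in $\{1,2,\ldots,\dim X\}$ that excess is at most $1$. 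The right-hand side equals $1$ precisely when every $d_i-d_{i-1}=1$, and is at least $2$ as soon as any block has a gap (a nonempty $\tilde R^i$ contributes on top of $d_s-d_{s-1}\ge 1$). This one inequality forces all blocks to close simultaneously, replacing your entire propagation scheme, including the delicate small-$s$ cases you deferred.
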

\begin{proof}
Since $E_\lambda$ is an Ulrich bundle, all entries in the associated datum $T_X^{\lambda}$ should be integers. We first determine the parity of each entry of $\tilde{R}^{i}$, $\tilde{P}^{ij}$ and $\tilde{Q}^{ij}$. 
By Lemma \ref{lcm3D}, we know that as long as $s\ge 2$, $\overline{a_k}$ is even for any integer $k$ except $d_1,\ldots,d_s$. By observing  the concrete form of $\tilde{R}^{i}_{uv}$ and using Lemmas \ref{lcm3D} and \ref{lcm4D}, we can easily determine that the numerator of $\tilde{R}^{i}_{uv}$ is odd, which implies that every $\tilde{R}^{i}_{uv}$ is odd, as $\tilde{R}^{i}_{uv}$ is an integer. 
Moreover, for any integer $i$ and $j~(1\le i\le j\le s)$, as long as $j-i$ is even, it is easy to tell by Lemmas \ref{lcm3D} and \ref{lcm4D} that the numerator of $\tilde{P}^{ij}_{uv}$ is odd, and therefore every entry of  $\tilde{P}^{ij}$ is also odd. Similarly, when $j-i$ is odd, every entry of  $\tilde{Q}^{ij}$ is odd. This tells us that the number of odd numbers in $T_X^{\lambda}$ is at least $$\sum\limits_{1\le i\le s}|\tilde{R}^{i}|+\sum\limits_{\substack{1\le i\le j\le s\\j-i~\text{is even}}}|\tilde{P}^{ij}|+\sum\limits_{\substack{1\le i\le j\le s\\j-i~\text{is odd}}}|\tilde{Q}^{ij}|,$$ where $|M|$ represents the number of entries in a matrix $M$. It follows that the number of even numbers in $T_X^{\lambda}$ is at most
$$\sum\limits_{\substack{1\le i\le j\le s\\j-i~\text{is odd}}}|\tilde{P}^{ij}|+\sum\limits_{\substack{1\le i\le j\le s,i\ne s\\j-i~\text{is even}}}|\tilde{Q}^{ij}|.$$
 Note that for any integer $i$ and $j~(1\le i\le j\le s)$, $|\tilde{P}^{ij}|$ is always equal to $|\tilde{Q}^{ij}|$. So the number of odd numbers in $T_X^{\lambda}$ minus the number of even numbers is greater than or equal to 
 $$
 \sum_{1\le i\le s}|\tilde{R}^{i}|+|\tilde{P}^{ss}|=\sum_{i=1}^{s}\frac{(d_i-d_{i-1})(d_i-d_{i-1}-1)}{2}+d_s-d_{s-1}\ge 1.
 $$
On the other hand, since $E_\lambda$ is an Ulrich bundle,  $T_X^{\lambda}=\{1,2,\ldots, \dim X\}$. Hence the number of odd numbers in $T_X^{\lambda}$ minus the number of even numbers is at most $1$. This together with the above equality tells us $d_{i}=d_{i-1}+1$ for every $1\leq i \leq s$.
\end{proof}

From Proposition \ref{d_i=i}, we can see that the matrix $\tilde{R}^{i}$ is empty for any $1\le i\le s$. In addition, $\tilde{P}^{ij}$ and $\tilde{Q}^{ij}$ are both $(1\times 1)$-matrices with only one entry $\tilde{P}^{ij}_{11}=\frac{\sum_{k=i}^{j}\overline{a_k}}{j-i+1}$ and $\tilde{Q}^{ij}_{11}=\frac{\sum_{k=i}^{n-2}\overline{a_k}+\sum_{k=j+1}^{n}\overline{a_k}}{2s-(i+j)}$ respectively. And since $d_s=n-1$ and $d_s=s$ by Proposition \ref{d_i=i}, we have $s=n-1$. Note that for type $D_n$, $n$ is naturally greater than or equal to $4$ and hence $s\ge 3$.\\

\textbf{Proof of Theorem \ref{mainthm} for type $D_n$ and $d_s=n-1$:}		
Suppose $E_\lambda$ is an Ulrich bundle on $X$. We first estimate the value of $\tilde{Q}^{ij}_{11}$ in terms of $s$. If $s\ge 5$, then according to Lemma \ref{lcm3D} (iii), $\overline{a_n}\ge 2s(s-1)$. Hence $\tilde{Q}^{ij}_{11}\ge \frac{\overline{a_n}}{2s-2}\ge s$, which is great than $4$. If $s$ is equal to $3$ or $4$, then $\overline{a_n}\ge s(s-1)$. By Lemma \ref{lcm4D}, $\overline{a_{d_1}},\overline{a_{d_2}},\ldots,\overline{a_{d_s}}$ are $s$ different odd numbers. Therefore $$\tilde{Q}^{ij}_{11}>\frac{(s-i)^2+(s-j)^2+s(s-1)}{2s-(i+j)}=j-i+\frac{2(s-j)^2+s(s-1)}{2s-(i+j)}.$$ If $i=j$, then the latter is greater than or equal to $2\sqrt{\frac{s(s-1)}{2}}$. If $j>i$, then the latter is greater than $1+\frac{s}{2}$. Therefore, when $s=3$ or $s=4$, we always have $\tilde{Q}^{ij}_{11}>2$. It follows that $2$ can only appear as $\tilde{P}^{t_0,t_0+1}_{11}$ for some integer $t_0~(1\le t_0\le s-1)$. This implies that either $\overline{a_{t_0}}=1,~\overline{a_{t_0+1}}=3$ or $\overline{a_{t_0}}=3,~\overline{a_{t_0+1}}=1$. Applying almost verbatim the proof after Step 2 of Section \ref{section1}, we get $4$ does not appear in any matrix $\tilde{P}^{ij}$. On the other hand, from the above analysis, we see that only when $s$ is equal to $3$ or $4$, $4$ can appear as $\tilde{Q}^{ij}_{11}$ for some pair $(i,j)$, where $1\le i\le j\le s~(i\ne s)$ and $j-i$ is even. Hence we have the inequality $4>\frac{(s-i)^2+(s-j)^2+s(s-1)}{2s-(i+j)}$. Substituting $s=3$ or $s=4$ and the possible pairs $(i,j)$ into this inequality, we find that only $s=3$ and $(i,j)=(1,1)$ satisfy this inequality. That is to say that $4$ is equal to 
$\tilde{Q}^{11}_{11}=\frac{\sum_{k=1}^{2}\overline{a_k}+\sum_{k=2}^{4}\overline{a_k}}{4}$, which means that $\overline{a_4}=6$, $\overline{a_2}=1$ and $\overline{a_1}+\overline{a_3}=8$. It leads to that $\tilde{P}^{13}_{11}=\frac{\sum_{k=1}^{3}\overline{a_k}}{3}=3$, which coincides with $\overline{a_{t_0}}$ or $\overline{a_{t_0+1}}$. In summary, we conclude that there are no irreducible homogeneous Ulrich bundles on $X$.
  
\subsubsection{$J=\{d_1,\ldots,d_s\}$ with $d_{s-1}=n-1$ and $d_s=n$}\label{}

First we notice that when $J$ is the set $\{d_1,\ldots,d_s\}$ with $d_{s-1}=n-1$ and $d_s=n$, the Picard number of $X=G/P_J$ is naturally greater than or equal to $2$. According to \cite[Section 3.2.1 III Case (d)]{Fang}, the associated datum $T_X^{\lambda}$ of $E_{\lambda}$ is of the following form. 

$T_X^{\lambda}=\{\hat{P}^{ij}, \hat{Q}^{ij}~(1\le i\le j\le s-1),\hat{R}^{i}~(1\le i\le s-1)\}$, where
\begin{align*}
	&\hat{P}^{ij}_{uv}=\frac{\sum\limits_{k=d_i-u+1}^{d_j+v-1}\overline{a_k}}{j-i+1}~(1\le u\le d_i-d_{i-1},1\le v\le d_{j+1}-d_j);\\
	&\hat{Q}^{ij}_{uv}=\frac{\sum\limits_{k=d_{i-1}+u}^{n-2}\overline{a_k}+\sum\limits_{k=d_j+v}^{n}\overline{a_k}}{2s-1-(i+j)}
	~(1\le u\le d_i-d_{i-1},1\le v\le d_{j+1}-d_j);\\
	&\hat{R}^{i}_{uv}=\frac{\sum\limits_{k=d_{i-1}+u}^{n-2}\overline{a_k}+\sum\limits_{k=d_{i-1}+v}^{n}\overline{a_k}}{2(s-i)}~(1\le u< v\le d_i-d_{i-1}).
\end{align*}
First of all, let us prepare the following lemma.

\begin{lem}\label{lcm5D}
	Let $E_{\lambda}$ be an irreducible homogeneous Ulrich bundle on $X$ with highest weight $\lambda=a_1\lambda_1+\cdots+a_n\lambda_n$. 
	\begin{itemize}
		\item[(i)] For any $1\le k\le d_1-1$, we have
		$lcm(1,2,\ldots, 2s-3)\mid\overline{a_k}$.
		\item[(ii)] For any integer $2\le i\le s-1$ and $d_{i-1}+1\le k\le d_{i}-1$, we have
		$$lcm(1,2,\ldots, 2s-2i-1)\mid\overline{a_k}~\text{and}~lcm(2s-2i+1,2s-2i+2,\ldots, 2s-i-1)\mid\overline{a_k}.$$
		\item[(iii)]  $lcm(1,2,\ldots,s-1)\mid\overline{a_n}-\overline{a_{n-1}}$.
	\end{itemize}
\end{lem}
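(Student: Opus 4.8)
The plan is to follow the template established in Lemmas \ref{lemma1}, \ref{lcm1D} and \ref{lcm3D}: since $E_\lambda$ is Ulrich, Theorem \ref{FN} forces $T_X^\lambda=\{1,2,\ldots,\dim X\}$, so every entry of $\hat P^{ij}$, $\hat Q^{ij}$ and $\hat R^i$ is an integer, and hence so is the difference of any two of them. The whole proof consists in choosing, for each target $\overline{a_k}$ (and for $\overline{a_n}-\overline{a_{n-1}}$ in (iii)), differences of \emph{adjacent} entries that telescope down to a single $\overline{a_k}$ divided by one of the denominators $j-i+1$ or $2s-1-(i+j)$, and then letting the free index sweep so that the resulting divisors accumulate into a least common multiple.

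For (i) and (ii) I would fix $i$ with $1\le i\le s-1$ and let $j$ run over $i,i+1,\ldots,s-1$. The vertical differences
\[
\hat P^{ij}_{u+1,1}-\hat P^{ij}_{u,1}=\frac{\overline{a_{d_i-u}}}{j-i+1},\qquad
\hat Q^{ij}_{u,1}-\hat Q^{ij}_{u+1,1}=\frac{\overline{a_{d_{i-1}+u}}}{2s-1-(i+j)}
\]
are integers, so $(j-i+1)\mid\overline{a_k}$ and $(2s-1-(i+j))\mid\overline{a_k}$ for every $k$ in the block $d_{i-1}+1\le k\le d_i-1$. Sweeping $j$ yields $lcm(1,\ldots,s-i)\mid\overline{a_k}$ from the first family and $lcm(s-i,\ldots,2s-2i-1)\mid\overline{a_k}$ from the second, whose combination is precisely the first divisibility in (ii) (and (i) is the case $i=1$). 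For the second divisibility in (ii), when $i\ge2$ I would instead run $i'$ over $1,\ldots,i-1$ and use the horizontal difference $\hat Q^{i',i-1}_{1,v}-\hat Q^{i',i-1}_{1,v+1}=\overline{a_{d_{i-1}+v}}/(2s-i'-i)$, whose denominators sweep out $\{2s-2i+1,\ldots,2s-i-1\}$. A point worth flagging, exactly as in Lemmas \ref{lcm1D} and \ref{lcm3D}, is that one cannot extract an additional factor $2(s-i)$ from $\hat R^i$: because $\hat R^i$ is \emph{strictly} upper triangular ($u<v$), when $d_i-d_{i-1}=2$ it has a single entry and the difference argument is unavailable, which is exactly why (ii) carries only the two stated lcm conditions.

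The genuinely new ingredient is (iii), where the constraints $d_{s-1}=n-1$ and $d_s=n$ collapse the final block. The plan is to pair a $\hat Q$-entry against a $\hat P$-entry sharing the denominator $s-i$: since $d_{s-1}+1=n$, one has $\hat Q^{i,s-1}_{1,1}=\bigl(\sum_{k=d_{i-1}+1}^{n-2}\overline{a_k}+\overline{a_n}\bigr)/(s-i)$ while $\hat P^{i,s-1}_{d_i-d_{i-1},1}=\bigl(\sum_{k=d_{i-1}+1}^{n-1}\overline{a_k}\bigr)/(s-i)$, so their difference equals $(\overline{a_n}-\overline{a_{n-1}})/(s-i)$, which must be an integer. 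Letting $i$ range over $1,\ldots,s-1$ makes $s-i$ run through $1,\ldots,s-1$ and gives $lcm(1,\ldots,s-1)\mid\overline{a_n}-\overline{a_{n-1}}$. I expect the only non-routine step to be identifying this correct pairing of $\hat Q^{i,s-1}$ and $\hat P^{i,s-1}$ (forced by the fact that the $j=s-1$ column index is pinned to $v=1$); the remainder is a bookkeeping variant of the earlier lemmas.
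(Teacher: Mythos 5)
Your proof is correct and takes essentially the same route as the paper: for (i) and (ii) the paper simply defers to the telescoping-difference scheme of Lemma \ref{lcm1D} (with the same caveat you flag, that the strictly upper triangular $\hat{R}^i$ yields no extra factor $2(s-i)$), and for (iii) the paper uses exactly your pairing, computing $\hat{Q}^{i,s-1}_{1,d_s-d_{s-1}}-\hat{P}^{i,s-1}_{d_i-d_{i-1},1}=\frac{\overline{a_n}-\overline{a_{n-1}}}{s-i}$ and letting $i$ sweep through $1,\ldots,s-1$. Your index bookkeeping (denominators $\{1,\ldots,s-i\}$, $\{s-i,\ldots,2s-2i-1\}$ and $\{2s-2i+1,\ldots,2s-i-1\}$) checks out against the stated lcm's.
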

\begin{proof}
	The proof of (i) and (ii) is similar to the proof of Lemma \ref{lcm1D}. 
	For the statement (iii), let us compare the difference 
	$$\hat{Q}^{i,s-1}_{1,d_s-d_{s-1}}-\hat{P}^{i,s-1}_{d_i-d_{i-1},1}=\frac{\overline{a_n}-\overline{a_{n-1}}}{s-i}.$$
	Since $E_{\lambda}$ is an Ulrich bundle, the above difference should be an integer and hence $(s-i)\mid\overline{a_n}-\overline{a_{n-1}}$ for any $1\le i\le s-1$. Then we are done.
\end{proof}
\begin{rem}\label{remark}
From Lemma \ref{lcm5D}, it is easy to find that when $s$ is greater than or equal to $4$, $\overline{a_k}$ is even for any integer $1\le i\le s-1$ and $k\in[d_{i-1}+1,d_{i}-1]$. 

When $s$ is equal to $3$, $\overline{a_k}$ is even for any integer $1\le k\le d_1-1$ by Lemma \ref{lcm5D} (i). If $d_2-d_1\ge 2$, then 
$\hat{R}^{2}$ is not empty. Since for any $1\le u<d_2-d_1$,
$\hat{R}^{2}_{u,d_2-d_1}=\frac{\sum_{k=d_1+u}^{d_2-1}\overline{a_k}+\overline{a_{d_2}}+\overline{a_{d_3}}}{2}$ is an integer and $\overline{a_{d_2}}$ has the same parity as $\overline{a_{d_3}}$ by Lemma \ref{lcm5D} (iii), $\sum_{k=d_1+u}^{d_2-1}\overline{a_k}$ is even for any $1\le u<d_2-d_1$. It follows that $\overline{a_k}$ is even for any integer $d_1+1\le k\le d_2-1$. 

 When $s$ is equal to $2$, since $d_1=n-1$ and $n\ge 4$, we have $d_1\ge 3$. By comparing the difference between any two adjacent elements of the first row and last column of $\hat{R}^1$, we can see that all  $\overline{a_k}~(1\le k\le d_1-1)$ are even. In summary, as long as $s$ is greater than or equal to 2, we always have $\overline{a_k}$ is even for any integer $1\le i\le s-1$ and $k\in[d_{i-1}+1,d_{i}-1]$. 
\end{rem}

Using Lemma \ref{lcm5D} and Remark \ref{remark}, we obtain the non-trivial result.
\begin{lem}\label{lcm6D}
	With the same assumption as Lemma \ref{lcm5D}, then $\overline{a_{d_1}},\overline{a_{d_2}},\ldots,\overline{a_{d_s}}$ are $s$ different odd numbers.
\end{lem}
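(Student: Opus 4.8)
The plan is to follow the template of Lemmas \ref{lemma2} and \ref{lemma4}: first fix the common parity of the numbers $\overline{a_{d_1}},\ldots,\overline{a_{d_s}}$, and then invoke that an Ulrich bundle forces $T_X^\lambda=\{1,\ldots,\dim X\}$ (so its entries are distinct integers with minimum $1$) to conclude they are $s$ distinct odd integers. Throughout I assume $E_\lambda$ is Ulrich, so by Theorem \ref{FN} every entry of $T_X^\lambda$ is an integer, all entries are distinct, and by \cite[Remark 3.4]{Fang} the smallest entry equals $\min_{1\le i\le s}\{\overline{a_{d_i}}\}=1$. Recall that here $d_{s-1}=n-1$ and $d_s=n$, and that by Remark \ref{remark} every $\overline{a_k}$ whose index $k$ lies strictly between two consecutive $d_i$'s (for $i\le s-1$) is even.

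First I would show $\overline{a_{d_1}},\ldots,\overline{a_{d_{s-1}}}$ all share a parity. For $1\le i\le s-2$ the smallest entry of $\hat{P}^{i,i+1}$ is $\hat{P}^{i,i+1}_{11}=\tfrac{1}{2}\bigl(\overline{a_{d_i}}+\overline{a_{d_{i+1}}}+\sum_{d_i<k<d_{i+1}}\overline{a_k}\bigr)$, an integer whose middle sum is even by Remark \ref{remark}, so $\overline{a_{d_i}}\equiv\overline{a_{d_{i+1}}}\pmod 2$. Next I would attach $\overline{a_n}=\overline{a_{d_s}}$ to this chain. For $s\ge 3$, Lemma \ref{lcm5D}(iii) gives $2\mid \mathrm{lcm}(1,\ldots,s-1)\mid \overline{a_n}-\overline{a_{n-1}}$, hence $\overline{a_n}\equiv\overline{a_{n-1}}=\overline{a_{d_{s-1}}}\pmod 2$. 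For $s=2$ that divisibility is vacuous, so I would instead use that $d_1=n-1\ge 3$ makes $\hat{R}^1$ nonempty and read off (denominator $2$) the entry $\hat{R}^1_{u,\,n-1}=\tfrac{1}{2}\bigl(\sum_{k=u}^{n-2}\overline{a_k}+\overline{a_{n-1}}+\overline{a_n}\bigr)$ with $u<n-1$, whose prefix sum is even, forcing $\overline{a_{n-1}}+\overline{a_n}$ even. In all cases $\overline{a_{d_1}},\ldots,\overline{a_{d_s}}$ share one parity, and since their minimum is $1$, they are all odd.

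It remains to prove pairwise distinctness, which is the one genuine difference from Lemmas \ref{lemma2} and \ref{lemma4}: because the matrices here are indexed only up to $s-1$, the number $\overline{a_n}$ is not visibly a diagonal minimum and must be located inside the datum by hand. The key observation is that $\overline{a_{d_i}}=\hat{P}^{ii}_{11}$ for $1\le i\le s-1$, while $\overline{a_n}$ is realized as the entry $\hat{Q}^{s-1,s-1}_{d_{s-1}-d_{s-2},\,1}$: its denominator is $2s-1-2(s-1)=1$, its first sum $\sum_{k=d_{s-1}}^{n-2}\overline{a_k}$ is empty, and its second sum is $\sum_{k=n}^{n}\overline{a_k}=\overline{a_n}$. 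Thus each of $\overline{a_{d_1}},\ldots,\overline{a_{d_s}}$ occupies a distinct cell of $T_X^\lambda$; since all entries of $T_X^\lambda$ are distinct, the $s$ numbers are pairwise distinct, finishing the proof.

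I expect the main obstacle to be exactly this last step: one must exhibit $\overline{a_n}$ as a concrete entry (via the vanishing first sum in $\hat{Q}^{s-1,s-1}$) rather than infer it abstractly, as was possible in the earlier cases where $\overline{a_n}$ appeared as some $\tilde{R}^{s}_{\min}$. A secondary technical point is the separate treatment of $s=2$ in the parity argument, where Lemma \ref{lcm5D}(iii) carries no information and one must fall back on the structure of $\hat{R}^1$.
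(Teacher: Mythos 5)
Your proposal is correct and follows essentially the same route as the paper's proof: the parity chain via the integrality of $\hat{P}^{i,i+1}_{\min}$ together with Remark \ref{remark}, Lemma \ref{lcm5D}(iii) for $s\ge 3$, the entry $\hat{R}^1_{1,d_1}$ for $s=2$, the minimum $1$ from \cite[Remark 3.4]{Fang}, and distinctness by realizing each $\overline{a_{d_i}}$ as an entry of $T_X^\lambda$. Your explicitly computed cell $\hat{Q}^{s-1,s-1}_{d_{s-1}-d_{s-2},\,1}=\overline{a_n}$ is precisely the entry the paper records as $\hat{Q}^{s-1,s-1}_{\min}$, so even the "by hand" step you flagged coincides with the paper's argument.
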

\begin{proof}
First we notice that  $\overline{a_{d_i}}=\hat{P}^{ii}_{min}~(1\le i\le s-1)$ and $\overline{a_{d_s}}=\hat{Q}^{s-1,s-1}_{min}$. Because $E_{\lambda}$ is an Ulrich bundle, $\overline{a_{d_i}}~(1\le i\le s)$ are $s$ different positive integers. Moreover, since $E_{\lambda}$ is an Ulrich bundle, we have
 $\min_{1\le i\le s}\{\overline{a_{d_i}}\}=1$ by \cite[Remark 3.4]{Fang}. So to prove this lemma it suffices to show that $\overline{a_{d_i}}~(1\le i\le s)$ have the same parity.	
	By Remark \ref{remark}, we see that  $\overline{a_k}$ is even for any integer $1\le i\le s-1$ and $k\in[d_{i-1}+1,d_{i}-1]$. Since $E_{\lambda}$ is an Ulrich bundle, $\hat{P}^{i,i+1}_{min}$ is an integer, which implies that all $\overline{a_{d_i}}~(1\le i\le s-1)$ have the same parity. In addition, Lemma \ref{lcm5D} (iii) tells us that as long as $s\ge 3$, $\overline{a_{d_s}}(=\overline{a_n})$ and $\overline{a_{d_{s-1}}}(=\overline{a_{n-1}})$ have the same parity. Therefore, if $s\ge 3$, then $\overline{a_{d_1}},\overline{a_{d_2}},\ldots,\overline{a_{d_s}}$ have the same parity. For the case $s$ is equal to $2$, because $\hat{R}^1_{1,d_1}=\frac{\sum_{k=1}^{d_1-1}\overline{a_k}+\overline{a_{d_1}}+\overline{a_{d_2}}}{2}$ is an integer and $\overline{a_k}$ is even for any $1\le k\le d_1-1$ by Remark \ref{remark}, $\overline{a_{d_1}}$ and $\overline{a_{d_2}}$ naturally have the same parity.
\end{proof}

As a consequence of Lemmas \ref{lcm5D} and \ref{lcm6D}, we obtain the following.

\begin{cor}\label{R^i}
With the same assumption as Lemma \ref{lcm5D}. If the matrix $\hat{R}^{i}$ is not empty, then every entry of $\hat{R}^{i}$ is greater than $2$. In particular, if $s\ge 4$, then every entry of $\hat{R}^{i}$ is greater than $4$. 
\end{cor}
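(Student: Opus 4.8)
The plan is to reduce the statement to a single lower bound on the smallest entry $\hat{R}^{i}_{min}$ of each non-empty matrix $\hat{R}^{i}$, and then to split the argument according to the size of the denominator $2(s-i)$. Since every $\overline{a_k}$ is a positive integer, the numerator of $\hat{R}^{i}_{uv}$ decreases as $u$ or $v$ grows, so under the constraint $1\le u<v\le d_i-d_{i-1}$ the minimum is attained at $u=d_i-d_{i-1}-1$ and $v=d_i-d_{i-1}$; these indices exist precisely because $\hat{R}^{i}$ is non-empty, i.e. $d_i-d_{i-1}\ge 2$. This gives
\[
\hat{R}^{i}_{min}=\frac{\overline{a_{d_i-1}}+2\sum_{k=d_i}^{n-2}\overline{a_k}+\overline{a_{n-1}}+\overline{a_n}}{2(s-i)}.
\]

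For the first assertion I would bound the numerator using only Lemma \ref{lcm6D}. Discarding the non-special positive terms and retaining the distinct odd numbers $\overline{a_{d_i}},\dots,\overline{a_{d_{s-2}}}$ (each carried with coefficient $2$) together with $\overline{a_{n-1}}=\overline{a_{d_{s-1}}}$ and $\overline{a_n}=\overline{a_{d_s}}$, and then assigning the smallest admissible odd values to the slots with coefficient $2$, one obtains a numerator at least $2(s-i)^2+3$. Writing $m=s-i\ge 1$, this yields $\hat{R}^{i}_{min}\ge m+\tfrac{3}{2m}$, which exceeds $2$ for every $m\ge 1$ and proves the first part.

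For the refined bound when $s\ge 4$ the same estimate already gives $\hat{R}^{i}_{min}\ge m+\tfrac{3}{2m}>4$ as soon as $m=s-i\ge 4$, so the only delicate cases are $m\in\{1,2,3\}$, where $2(s-i)$ is too small for the parity count to reach $4$. I expect this small-$m$ regime to be the main obstacle, and the key idea is to abandon the distinct-odd count and instead exploit the divisibility of the single interior weight $\overline{a_{d_i-1}}$ supplied by Lemma \ref{lcm5D}. Because $d_i-1$ lies strictly between $d_{i-1}$ and $d_i$ (again by non-emptiness), Lemma \ref{lcm5D} (i)--(ii) forces $12\mid\overline{a_{d_i-1}}$ when $m=1$, $30\mid\overline{a_{d_i-1}}$ when $m=2$, and $60\mid\overline{a_{d_i-1}}$ when $m=3$. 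Since $\hat{R}^{i}_{min}\ge \overline{a_{d_i-1}}/(2m)$, these three cases give $\hat{R}^{i}_{min}\ge 6,\ 7.5,\ 10$ respectively, all strictly larger than $4$. This handles every non-empty $\hat{R}^{i}$ and completes the proof.
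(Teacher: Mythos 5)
Your proof is correct, and it verifies: the reduction to the smallest entry at $(u,v)=(d_i-d_{i-1}-1,\,d_i-d_{i-1})$ is exactly right (non-emptiness gives $d_i-d_{i-1}\ge 2$, so these indices exist), the rearrangement count with the distinct odd values $\overline{a_{d_i}},\ldots,\overline{a_{d_s}}$ of Lemma \ref{lcm6D} does give numerator at least $2(s-i)^2+3$, and your divisibility claims check out against Lemma \ref{lcm5D}: for $m=s-i=1$ one has $i=s-1\ge 3$, so part (ii) gives $lcm(3,\ldots,s)\mid \overline{a_{d_i-1}}$ and $12\mid lcm(3,\ldots,s)$ once $s\ge 4$; for $m=2$, part (ii) gives $lcm(1,2,3)=6$ and $5\mid lcm(5,\ldots,s+1)$, hence $30$; for $m=3$, $lcm(1,\ldots,5)=60$ via part (ii) when $s\ge 5$ and via part (i) when $s=4$, $i=1$. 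For the first assertion your argument is essentially the paper's: the paper bounds a general entry by $\sum_{t=i}^{s-2}\overline{a_{d_t}}+\sum_{t=i}^{s}\overline{a_{d_t}}\ge (s-1-i)^2+(s+1-i)^2=2(s-i)^2+2$, yielding $(s-i)+1/(s-i)$ with a strict inequality from the discarded interior weight, whereas you keep that weight explicitly and get $(s-i)+3/(2(s-i))>2$; same mechanism, different bookkeeping. For the $s\ge 4$ refinement the routes genuinely diverge: the paper stays uniform in $i$, inserting a single interior weight bounded below by $s(s-1)$ (from Lemma \ref{lcm5D} (i) and (ii)) and applying AM--GM to get every entry at least $2\sqrt{1+s(s-1)/2}>4$ in one stroke; you instead case-split on $m=s-i$, settling $m\ge 4$ by the counting bound alone and $m\in\{1,2,3\}$ by the explicit divisors $12$, $30$, $60$ of the adjacent interior weight $\overline{a_{d_i-1}}$, which gives the sharper numeric bounds $6$, $15/2$, $10$ in the delicate regime. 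Both proofs rest on exactly the same two lemmas; the paper's AM--GM step is shorter and scales visibly with $s$, while your case analysis trades uniformity for more concrete (and stronger) constants, and either is fully adequate for the corollary.
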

\begin{proof}
Note that the matrix $\hat{R}^{i}$ is not empty is equivalent to say $d_i-d_{i-1}\ge 2$. 
By Lemma \ref{lcm6D}, for any $1\le i\le s-1$ and $1\le u< v\le d_i-d_{i-1}$, we have
\begin{align*}
	\hat{R}^{i}_{uv}&=\frac{\sum\limits_{k=d_{i-1}+u}^{n-2}\overline{a_k}+\sum\limits_{k=d_{i-1}+v}^{n}\overline{a_k}}{2(s-i)}>\frac{\sum\limits_{t=i}^{s-2}\overline{a_{d_t}}+\sum\limits_{t=i}^{s}\overline{a_{d_t}}}{2(s-i)}\\
	&\ge \frac{(s-1-i)^2+(s+1-i)^2}{2(s-i)}=2+\frac{(s-1-i)^2}{(s-i)}.
\end{align*}
Therefore, every entry of $\hat{R}^{i}$ is greater than $2$. If moreover $s\ge 4$, then by Lemma \ref{lcm5D} (i) and (ii), we have $\overline{a_k}\ge s(s-1)$ for any integer $1\le i\le s-1$ and $k\in[d_{i-1}+1,d_{i}-1]$. Thus every entry $\hat{R}^{i}_{uv}$ satisfies
\begin{align*}
\hat{R}^{i}_{uv}&\ge\frac{\sum\limits_{t=i}^{s-2}\overline{a_{d_t}}+\sum\limits_{t=i}^{s}\overline{a_{d_t}}+s(s-1)}{2(s-i)}\ge \frac{(s-1-i)^2+(s+1-i)^2+s(s-1)}{2(s-i)}\\
&\ge 2\sqrt{1+\frac{s(s-1)}{2}}>4.
\end{align*}
\end{proof}

Before proving the main theorem, let's prove a simple case.
\begin{prop}\label{s=2,3}
 If $s$ is $2$ or $3$, then there are no irreducible homogeneous Ulrich bundles on $X$ with respect to the minimal ample class.
\end{prop}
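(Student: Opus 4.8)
The plan is to treat the two Picard numbers $s=2$ and $s=3$ separately, in both cases exploiting the parity and divisibility information of Lemma~\ref{lcm5D} and Lemma~\ref{lcm6D} together with the lower bound of Corollary~\ref{R^i}, and then asking where the small integers $2$ and $4$ can sit. Since an Ulrich bundle forces $T_X^\lambda=\{1,\dots,\dim X\}$ by Theorem~\ref{FN}, it suffices to produce a small integer that has no admissible slot, or to produce two coinciding entries.

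For $s=2$ we have $J=\{n-1,n\}$, so $T_X^\lambda$ consists only of $\hat P^{11}$, $\hat Q^{11}$ and the matrix $\hat R^1$, which is nonempty because $d_1=n-1\ge 3$. Using Remark~\ref{remark} (all $\overline{a_k}$ with $k\le n-2$ are even) and Lemma~\ref{lcm6D} ($\overline{a_{n-1}},\overline{a_n}$ odd), a direct inspection of the numerators shows that every entry of $\hat P^{11}$ and of $\hat Q^{11}$ is odd. By Corollary~\ref{R^i} every entry of $\hat R^1$ exceeds $2$. Hence $2$ cannot appear anywhere in $T_X^\lambda$, contradicting $T_X^\lambda=\{1,\dots,\dim X\}$; this settles $s=2$.

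For $s=3$ I would again record the parities first: by Lemma~\ref{lcm5D}, Lemma~\ref{lcm6D} and Remark~\ref{remark} the entries of $\hat P^{11},\hat P^{22},\hat Q^{11},\hat Q^{22}$ are all odd, so every even entry of $T_X^\lambda$ must come from $\hat P^{12}$, $\hat Q^{12}$ or $\hat R^1$. Since $\hat R^1>2$ by Corollary~\ref{R^i}, the value $2$ can only occur as the minimal entry of $\hat P^{12}$ or of $\hat Q^{12}$; writing $x=\overline{a_{d_1}}$, $y=\overline{a_{n-1}}$, $z=\overline{a_n}$ (three distinct odd numbers with $\min=1$ by Lemma~\ref{lcm6D} and \cite{Fang} Remark 3.4), these minima are $\tfrac12\sum_{k=d_1}^{n-1}\overline{a_k}$ and $\tfrac12(\sum_{k=d_1}^{n-2}\overline{a_k}+z)$. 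Because any interior index contributes an even $\overline{a_k}\ge 2$, forcing one of these minima to equal $2$ collapses the flag to $d_1=n-2$, i.e. $J=\{n-2,n-1,n\}$, with $\{x,y\}=\{1,3\}$ or $\{x,z\}=\{1,3\}$.

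It then remains to show that in this reduced configuration the integer $4$ cannot be accommodated. Here the only small entries are the six ``corner'' values $x,\ y,\ z,\ \tfrac{x+y}{2},\ \tfrac{x+z}{2}$ and $\tfrac{x+y+z}{3}$, the last being $\hat Q^{11}_{n-2,1}$, which must be an integer; meanwhile Lemma~\ref{lcm5D}(i) forces every interior $\overline{a_k}$ with $k\le n-3$ to be a multiple of $6$, so all non-corner even entries of $\hat P^{12},\hat Q^{12}$ are $\ge 5$ and $\hat R^1_{\min}\ge 4$, with equality only when $x=1$ and $\{y,z\}=\{3,5\}$. A short finite check of the surviving possibilities (matching $4$ against $\tfrac{x+y}{2}$, $\tfrac{x+z}{2}$ or $\hat R^1_{\min}$ and using integrality of $\tfrac{x+y+z}{3}$) shows that each either repeats one of the corner values or renders $\tfrac{x+y+z}{3}$ non-integral; and if $4$ matches none of them it is simply absent. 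Either way $T_X^\lambda\ne\{1,\dots,\dim X\}$, contradicting Theorem~\ref{FN}. I expect the main obstacle to be precisely this last bookkeeping: one must be certain that $2$ and $4$ have no hiding place outside the handful of corner entries, which is exactly what the divisibility bounds of Lemma~\ref{lcm5D} and the estimate of Corollary~\ref{R^i} guarantee.
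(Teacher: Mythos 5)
Your proposal is correct and takes essentially the same approach as the paper: for $s=2$ the parity of $\hat{P}^{11}$, $\hat{Q}^{11}$ together with Corollary \ref{R^i} excludes the entry $2$, and for $s=3$ your reduction (forcing $2=\hat{P}^{12}_{\min}$ or $\hat{Q}^{12}_{\min}$, hence $d_1=n-2$ with $\{1,3\}$-pairs, then matching $4$ against $\tfrac{x+y}{2}$, $\tfrac{x+z}{2}$ or $\hat{R}^1_{\min}$ and using integrality of $\hat{Q}^{11}_{\min}=\tfrac{x+y+z}{3}$ to produce a repeated entry or a non-integer) reproduces the paper's case analysis exactly. The only cosmetic point is that when locating $2$ you should cite Corollary \ref{R^i} for all $\hat{R}^{i}$ (in particular $\hat{R}^{2}$, which may be nonempty before the reduction $d_2=d_1+1$), not just $\hat{R}^1$.
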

\begin{proof}
Suppose there exists an Ulrich bundle $E_{\lambda}$ on $X$. Let $T_X^{\lambda}$ be the associated datum of $E_{\lambda}$.  If $s$ is $2$, then $T_X^{\lambda}$ consists of  $\hat{P}^{11}$, $\hat{Q}^{11}$ and $\hat{R}^{1}$. 
By Remark \ref{remark} and Lemma \ref{lcm6D}, we see that $\overline{a_1},\ldots, \overline{a_{n-2}}$ are even and $\overline{a_{n-1}}$, $\overline{a_n}$ are odd. Thus all entries of $\hat{P}^{11}$ and $\hat{Q}^{11}$ are odd. Hence $2$ can only appear in $\hat{R}^{1}$. However, by Corollary \ref{R^i}, we know it is impossible.\\  

If $s$ is $3$, then $T_X^{\lambda}$ consists of  $\hat{P}^{ij}$, $\hat{Q}^{ij}~(1\le i\le j\le 2)$ and $\hat{R}^{i}~(i=1,2)$. Similarly, from Remark \ref{remark} and Lemma \ref{lcm6D}, we can easily judge that all entries of $\hat{P}^{ii}$ and $\hat{Q}^{ii}~(i=1,2)$ are odd. Morover, since every entry of $\hat{R}^{i}$ is greater than $2$ by Corollary \ref{R^i}, $2$ can only appear as the smallest entry of $\hat{P}^{12}$ or $\hat{Q}^{12}$. 

If $2$ is equal to $\hat{P}^{12}_{min}$, then $d_1=d_2-1=n-2$ and $(\overline{a_{d_1}}, \overline{a_{d_2}})=(1,3)$ or $(3,1)$. In this case, $\hat{R}^2$ is empty. In addition, the second smallest element in $\hat{P}^{12}$ is $\hat{P}^{12}_{21}(=\frac{\sum_{k=d_1-1}^{d_2}\overline{a_k}}{2})$, which is greater than $4$, as $\overline{a_{d_1-1}}\ge 6$ by Lemma \ref{lcm5D} (i). Hence the  candidate of $4$ can only be  $\hat{Q}^{12}_{min}$ or $\hat{R}^1_{min}$. If $4=\hat{Q}^{12}_{min}=\frac{\overline{a_{d_1}}+\overline{a_n}}{2}$, then $\overline{a_n}$ is $5$ or $7$, because $\overline{a_{d_1}}$ is $1$ or $3$. This results in $\hat{Q}^{11}_{min}=\frac{\sum_{k=n-2}^{n}\overline{a_k}}{3}$ being either equal to $3$ or not an integer. However, in either case, it contradicts the assumption that $E_{\lambda}$ is an Ulrich bundle. If $4=\hat{R}^1_{min}=\frac{\sum_{k=d_1-1}^{n-2}\overline{a_k}+\sum_{k=d_1}^{n}\overline{a_k}}{4}$, then we must have $\overline{a_{d_1-1}}=6$,
 $\overline{a_{d_1}}=1$, $\overline{a_{d_2}}=3$ and $\overline{a_n}=5$. It follows that $\hat{Q}^{11}_{min}$ is equal to $3$, this leads to a contradiction.
 
 If $2$ is equal to $\hat{Q}^{12}_{min}$, then $d_1=n-2$ and $(\overline{a_{d_1}}, \overline{a_n})=(1,3)$ or $(3,1)$. In this case, $\hat{R}^2$ is also empty. In addition, since $\overline{a_{d_1-1}}\ge 6$, the second smallest element $\hat{Q}^{12}_{d_1-1,1}$ in $\hat{Q}^{12}$ is greater than $4$. Hence the candidate of $4$ can only be $\hat{P}^{12}_{min}$ or $\hat{R}^1_{min}$. If $4=\hat{P}^{12}_{min}$, then $\overline{a_{d_2}}$ is $5$ or $7$. This results in $\hat{Q}^{11}_{min}$ being either equal to $3$ or not an integer, which is impossible. If $4=\hat{R}^1_{min}$, then we must have $\overline{a_{d_1-1}}=6$,
 $\overline{a_{d_1}}=1$, $\overline{a_{d_2}}=5$ and $\overline{a_n}=3$. This makes $\hat{Q}^{11}_{min}$ equal to $3$, contrary to hypothesis.
\end{proof}

\textbf{Proof of Theorem \ref{mainthm} for type $D_n$ and $d_{s-1}=n-1,d_s=n$:} In Proposition \ref{s=2,3}, we have proved the main theorem for the cases $s=2$ and $s=3$. From now on, we assume that $s$ is greater than or equal to $4$ and assume there exists an Ulrich bundle $E_{\lambda}$ on $X$. From Corollary \ref{R^i}, we first determine that $2$ and $4$ can only be entries of $\hat{P}^{ij}$ or $\hat{Q}^{ij}$. \\

\textbf{Case 1:} If there is a pair of integers $(i,j)$ such that
$2=\hat{P}^{ij}_{min}$,
we can infer that $(i,j)=(t,t+1)$ for some $1\leq t\leq s-2$, $d_{t+1}=d_t+1$ and $\overline{a_{d_t}}+\overline{a_{d_{t+1}}}=4$. Further, we can infer that $\overline{a_n}\ge 5$, since $\overline{a_n}$ is different from $\overline{a_{d_i}}$ for $1\le i\le s-1$.
By the same argument as Step $3$ in Section \ref{section1}, we get $4$ would not appear as an entry of $\hat{P}^{ij}$.
Therefore, $4$ appears as the smallest entry of $\hat{Q}^{ij}$ for some $1\leq i\leq j\leq s-1$. By simple calculation, we find that every entry in $\hat{Q}^{ij}$ satisfies the following inequality:
\begin{align}\label{inequality}
\hat{Q}^{ij}_{uv}\ge	\left\{
	\begin{array}{ll}
		\frac{(s-i)^2}{s-i}=s-i, & \text{if}~j=s-1;\\
		\frac{(s-i+1)^2+(s-j-2)^2}{2s-1-(i+j)}=j-i+3+\frac{2(s-j-2)^2}{2s-1-(i+j)}, & \text{if}~j\le s-2.\\
	\end{array}
	\right.   
\end{align}
Moreover, note that when $j-i$ is even, every entry of $\hat{Q}^{ij}$ is odd. Hence from the inequality (\ref{inequality}), we can infer that if
$4$ lies in $\hat{Q}^{ij}$, then the candidate for the pair $(i,j)$ is either $(s-2,s-1)$, $(s-3,s-2)$ or $(s-4,s-1)$ and $4=\hat{Q}^{ij}_{min}=\frac{\sum_{k=d_i}^{n-2}\overline{a_k}+\sum_{k=d_{j+1}}^{n}\overline{a_k}}{2s-1-(i+j)}$.\\

\textbf{Case 1.1:} Suppose $(i,j)$ is $(s-2,s-1)$. Then from the above equality and the assertion $\overline{a_k}\ge s(s-1)$ for any $k\ne d_1, d_2,\ldots,d_s$ (see Lemma \ref{lcm5D} (i) and (ii)), we can deduce that $d_{s-2}=n-2$ and $\overline{a_{d_{s-2}}}+\overline{a_n}=8$. Since $\overline{a_n}\ge 5$, we have $\overline{a_{d_{s-2}}}$ is $1$ or $3$. This implies that $t$ is $s-3$ or $s-2$. Hence  
$\overline{a_{d_{s-3}}}+\overline{a_{d_{s-2}}}=4$ or $\overline{a_{d_{s-2}}}+\overline{a_{d_{s-1}}}=4$. If the former happens, then $$\hat{Q}^{s-3,s-1}_{min}=\frac{\overline{a_{d_{s-3}}}+\overline{a_{d_{s-2}}}+\overline{a_n}}{3}=\frac{4+\overline{a_n}}{3}.$$
  If the latter happens, then 
$$\hat{Q}^{s-2,s-2}_{min}=\frac{\overline{a_{d_{s-2}}}+\overline{a_{d_{s-1}}}+\overline{a_n}}{3}=\frac{4+\overline{a_n}}{3}.$$ Since $\overline{a_n}$ is $5$ or $7$, $\frac{4+\overline{a_n}}{3}$ is either equal to $3$ or not an integer, contrary to hypothesis. \\

\textbf{Case 1.2:} Suppose $(i,j)$ is $(s-3,s-2)$. Then for the same reason as in Case 1.1, we can infer that $d_{s-l}=n-l$ for any $0\le l\le 3$ and $\overline{a_{d_{s-3}}},~\overline{a_{d_{s-2}}},~\overline{a_{d_{s-1}}},~\overline{a_n}$ are $1,~3,~5,~7$ up to a permutation. It implies that $t=s-3$ or $t=s-2$. Thus $\overline{a_{d_{s-2}}}$ is $1$ or $3$. However, in either case $$\hat{Q}^{s-3,s-3}_{min}=\frac{\sum_{k=d_{s-3}}^{n-2}\overline{a_k}+\sum_{k=d_{s-2}}^{n}\overline{a_k}}{5}=\frac{16+\overline{a_{d_{s-2}}}}{5}$$ is not an integer. This is a contradiction.\\

\textbf{Case 1.3:} Suppose $(i,j)$ is $(s-4,s-1)$. Then for the same reason as in Case 1.2, we can infer that $d_{s-l}=n-l$ for any $0\le l\le 4$ and $\overline{a_{d_{s-4}}},~\overline{a_{d_{s-3}}},~\overline{a_{d_{s-2}}},~\overline{a_n}$ are $1,~3,~5,~7$ up to a permutation. It implies that $t=s-4$ or $t=s-3$. If $t=s-4$, then $\overline{a_{d_{s-4}}}+\overline{a_{d_{s-3}}}=4$ and $\overline{a_{d_{s-2}}}+\overline{a_n}=12$. Thus  $\hat{Q}^{s-3,s-1}_{min}=\frac{12+\overline{a_{d_{s-3}}}}{3}$ is either equal to $5$ or not an integer, contrary to hypothesis. If $t=s-3$, then $\overline{a_{d_{s-3}}}+\overline{a_{d_{s-2}}}=4$ and $\overline{a_{d_{s-4}}}+\overline{a_n}=12$, which implies that $\overline{a_n}=5$ or $\overline{a_n}=7$. Similarly, we have    
 $\hat{Q}^{s-3,s-1}_{min}=\frac{4+\overline{a_n}}{3}$ is either equal to $3$ or not an integer, contrary to hypothesis. \\  

\textbf{Case 2:} If there is a pair of integers $(i,j)$ such that
$2=\hat{Q}^{ij}_{min}$,
We can infer that $(i,j)$ can only be $(s-2,s-1)$ according to the inequality (\ref{inequality}). Then we have $d_{s-2}=n-2$ and $(\overline{a_{d_{s-2}}}, \overline{a_n})=(1,3)$ or $(3,1)$. Now we look for a candidate element in $\hat{Q}^{ij}$ or $\hat{P}^{ij}$ that should be $4$. \\

\textbf{Case 2.1:}
Suppose $4$ lies in some $\hat{Q}^{ij}$. Then the possible choice for $(i,j)$ is $(s-3,s-2)$ or $(s-4,s-1)$ by the inequality (\ref{inequality}). In either case, using the equality $\overline{a_{d_{s-2}}}+\overline{a_n}=4$ and a similar argument in Case 1, we can infer that $\overline{a_{d_{s-3}}}$ is either $5$ or $7$. Then $\hat{Q}^{s-3,s-1}_{min}=\frac{4+\overline{a_{d_{s-3}}}}{3}$ is either equal to $3$ or not an integer, contrary to hypothesis.\\

\textbf{Case 2.2:}
Suppose $4$ lies in some $\hat{P}^{ij}$. First, we note that when $j-i$ is even, every entry of $\hat{P}^{ij}$ is odd. Hence $4\in \hat{P}^{ij}$ implies $j-i$ is odd. On the other hand, since $(\overline{a_{d_{s-2}}}, \overline{a_n})=(1,3)$ or $(3,1)$, we have $\overline{a_{d_l}}\ge 5$ for any $1\le l\le s$ and $l\ne s-2,s$ by Lemma \ref{lcm6D}. Hence when $j\le s-3$, we get $$\hat{P}^{ij}_{uv}\ge\frac{\sum\limits_{l=i}^{j}\overline{a_{d_l}}}{j-i+1}\ge\frac{5+7+\cdots+2(j-i)+5}{j-i+1}=5+j-i\ge5.$$
And when $s-2\le j\le s-1$ and $j-i\ge 2$, we have 
$$\hat{P}^{ij}_{uv}\ge \frac{\sum\limits_{l=i}^{j}\overline{a_{d_l}}}{j-i+1}\ge\frac{1+5+7+\cdots+2(j-i)+3}{j-i+1}=4+\frac{(j-i)^2-3}{j-i+1}>4.$$
 Hence from the assumption $4\in \hat{P}^{ij}$, we can derive that $(i,j)$ is $(s-3,s-2)$ or $(s-2,s-1)$ and $4=\hat{P}^{ij}_{min}=\frac{\sum_{k=d_i}^{d_j}\overline{a_k}}{j-i+1}$. If $(i,j)=(s-3,s-2)$, then $\sum_{k=d_{s-3}}^{d_{s-2}}\overline{a_k}=8$ and hence 
$\hat{Q}^{s-3,s-1}_{min}=\frac{8+\overline{a_n}}{3}$. If $(i,j)=(s-2,s-1)$, then $\sum_{k=d_{s-2}}^{d_{s-1}}\overline{a_k}=8$ and hence $\hat{Q}^{s-2,s-2}_{min}=\frac{8+\overline{a_n}}{3}$. Because $\overline{a_n}$ is $1$ or $3$, $\frac{8+\overline{a_n}}{3}$ is either equal to $3$ or not an integer, contrary to hypothesis. In summary, we complete the proof.

\bibliography{ref}

\begin{thebibliography}{10}

\bibitem{ACCMT}
M.~Aprodu, G.~Casnati, L.~Costa, R.~M. Mir\'{o}-Roig, and M.~Teixidor~I Bigas.
\newblock Theta divisors and ulrich bundles on geometrically ruled surfaces.
\newblock {\em Ann. Mat. Pura Appl.(4)}, 199:199--216, 2020.

\bibitem{ACMR}
M.~Aprodu, L.~Costa, and R.~M. Mir\'{o}-Roig.
\newblock Ulrich bundles on ruled surfaces.
\newblock {\em J. Pure Appl. Algebra}, 222:131--138, 2018.

\bibitem{AFO}
M.~Aprodu, G.~Farkas, and A.~Ortega.
\newblock Chow forms and ulrich bundles on k3 surfaces.
\newblock {\em J. Reine Angew. Math.}, 730:225--249, 2017.

\bibitem{Bea1}
A.~Beauville.
\newblock Ulrich bundles on abelian surfaces.
\newblock {\em Proc. Amer. Math. Soc.}, 144:4609--4611, 2016.

\bibitem{Bea2}
A.~Beauville.
\newblock An introduction to ulrich bundles.
\newblock {\em Eur. J. Math.}, 4:26--36, 2018.

\bibitem{BN}
L.~Borisov and H.~Nuer.
\newblock Ulrich bundles on enriques surfaces.
\newblock {\em Int. Math. Res. Notices}, 13:4171--4189, 2018.

\bibitem{Cas}
G.~Casnati.
\newblock Ulrich bundles on non-special surfaces with $p_{g} = 0$ and $q =1$.
\newblock {\em Rev. Mat. Complut.}, 32:559--574, 2019.

\bibitem{CKM}
E.~Coskun, R.~S. Kulkarni, and Y.~Mustopa.
\newblock The geometry of ulrich bundles on del pezzo surfaces.
\newblock {\em J. Algebra}, 375:280--301, 2013.

\bibitem{CCHMW}
I.~Coskun, L.~Costa, J.~Huizenga, R.~M. Mir\'{o}-Roig, and M.~Woolf.
\newblock Ulrich {S}chur bundles on flag varieties.
\newblock {\em J. Algebra}, 474:49--96, 2017.

\bibitem{CJ}
I.~Coskun and L.~Jaskowiak.
\newblock Ulrich partition for two-step flag varieties.
\newblock {\em Involve}, 10:531--539, 2017.

\bibitem{CM1}
L.~Costa and R.~M. Mir\'{o}-Roig.
\newblock Gl($v$)-invariant ulrich bundles on grassmannians.
\newblock {\em Math. Ann.}, 361(1-2):443--457, 2015.

\bibitem{CM2}
L.~Costa and R.~M. Mir\'{o}-Roig.
\newblock Homogeneous acm bundles on a grassmannian.
\newblock {\em Adv. Math.}, 289:95--113, 2016.

\bibitem{DFR}
R.~Du, X.Y. Fang, and P.~Ren.
\newblock Homogeneous {ACM} bundles on isotropic {G}rassmannians.
\newblock {\em Forum Math.}, 35(3):763--782, 2023.

\bibitem{ESW}
D.~Eisenbud, F.-O. Schreyer, and J.~Weyman.
\newblock Resultants and chow forms via exterior syzygies.
\newblock {\em J. Amer. Math. Soc.}, 16:537--579, 2003.

\bibitem{Fae}
D.~Faenzi.
\newblock Ulrich bundles on k3 surfaces.
\newblock {\em Algebra Number Theory}, 13:1443--1454, 2019.

\bibitem{Fang}
X.~Fang.
\newblock Homogeneous acm and ulrich bundles on rational homogeneous spaces.
\newblock {\em Forum Math.}, to appear, 2024.

\bibitem{FNR}
X.~Fang, Y.~Nakayama, and P.~Ren.
\newblock Homogeneous acm bundles on grassmannians of exceptional types.
\newblock {\em J. Pure Appl. Algebra}, 228(11), 2024.

\bibitem{Fon}
A.~Fonarev.
\newblock Irreducible ulrich bundles on isotropic grassmannians.
\newblock {\em Mosc. Math. J.}, 16(4):711--726, 2016.

\bibitem{LP}
K.~S. Lee and K.~D. Park.
\newblock Equivariant ulrich bundles on exceptional homogeneous varieties.
\newblock {\em Adv. Geom.}, 21(2):187--205, 2021.

\bibitem{Lop1}
A.~F. Lopez.
\newblock On the existence of ulrich vector bundles on some surfaces of maximal
  albanese dimension.
\newblock {\em Eur. J. Math.}, 5:958--963, 2019.

\bibitem{Lop2}
A.~F. Lopez.
\newblock On the existence of ulrich vector bundles on some irregular surfaces.
\newblock {\em Proc. Amer. Math. Soc.}, 149:13--26, 2021.

\bibitem{MRPM}
R.~M. Mir\'{o}-Roig and J.~Pons-Llopis.
\newblock Special ulrich bundles on regular weierstrass fibrations.
\newblock {\em Math. Z.}, 293:1431--1441, 2019.

\bibitem{Nakayama}
Y.~Nakayama.
\newblock Homogeneous ulrich bundles on homogeneous varieties of certain
  exceptional types.
\newblock {\em Kodai Math. J.}, to appear, 2024.

\bibitem{Ott}
G.~Ottaviani.
\newblock Rational homogeneous varieties.
\newblock {\em Lecture notes for the summer school in Algebraic Geometry in
  Cortona}, 1995.

\bibitem{Snow}
D.~M. Snow.
\newblock Homogeneous vector bundles.
\newblock {\em Group Actions and Invariant Theory (Montreal 1988), CMS Conf.
  Proc.}, 10:193--205, 1989.

\bibitem{Ulrich}
B.~Ulrich.
\newblock Gorenstein rings and modules with high numbers of generators.
\newblock {\em Math. Z.}, 188:23--32, 1984.

\end{thebibliography}


\begin{thebibliography}{99}
	\bibitem{ACCMT}M. Aprodu, G. Casnati, L. Costa, R. M. Mir\'{o}-Roig and M. Teixidor I Bigas, Theta divisors and Ulrich bundles on geometrically ruled surfaces, Ann. Mat. Pura Appl. (4), $\mathbf{199}$ (2020), 199--216. 
	\bibitem{ACMR}M. Aprodu, L. Costa and R. M. Mir\'{o}-Roig, Ulrich bundles on ruled surfaces, J. Pure Appl. Algebra, $\mathbf{222}$ (2018), 131--138. 
	\bibitem{AFO}M. Aprodu, G. Farkas and A. Ortega, Minimal resolutions, Chow forms and Ulrich bundles on K3 surfaces, J. Reine Angew. Math., $\mathbf{730}$ (2017), 225--249.
	\bibitem{Bea2}A. Beauville,  Ulrich bundles on abelian surfaces, Proc. Amer. Math. Soc., $\mathbf{144}$ (2016), 4609--4611. 
	\bibitem{Bea}A. Beauville, An introduction to Ulrich bundles, Eur. J. Math., $\mathbf{4}$ (2018), 26--36. 
	\bibitem{BN}L. Borisov and H. Nuer, Ulrich bundles on Enriques surfaces, Int. Math. Res. Notices, (2018), 4171--4189. 
	\bibitem{Cas}G. Casnati, Ulrich bundles on non-special surfaces with $p_{g} = 0$ and $q =1$, Rev. Mat. Complut., $\mathbf{32}$ (2019), 559--574. 
	\bibitem{CCHMW}I. Coskun, L. Costa, J. Huizenga, R. M. Mir\'{o}-Roig and M. Woolf, Ulrich Schur bundles on flag varieties, J. Algebra, $\mathbf{474}$ ($2017$), 49--96.
	
	\bibitem{CJ}I. Coskun and L. Jaskowiak, Ulrich partition for two-step flag varieties, Involve, $\mathbf{10}$ (2017), 531--539.
	
	\bibitem{CKM}E. Coskun, R. S. Kulkarni and Y. Mustopa, The geometry of Ulrich bundles on del Pezzo surfaces, J. Algebra, $\mathbf{375}$ (2013), 280--301.
	
	\bibitem{CM1}L. Costa and R. M. Mir\'{o}-Roig, GL($V$)-invariant Ulrich bundles on Grassmannians, Math. Ann., $\mathbf{361}$ (2015), 443--457.
	
	\bibitem{CM2}L. Costa and R. M. Mir\'{o}-Roig, Homogeneous ACM bundles on a Grassmannian, Adv. Math., $\mathbf{289}$ (2016), 95--113.
	\bibitem{DFR}R. Du, X. Fang and P. Ren, Homogeneous ACM bundles on isotropic Grassmannians, Forum math., $\mathbf{35}$ (2023), no. 3, 763--782.
	\bibitem{ESW}D. Eisenbud, F.-O. Schreyer and J. Weyman, Resultants and Chow forms via exterior syzygies, J. Amer. Math. Soc., $\mathbf{16}$ (2003), 537--579. 
	\bibitem{Fae}D. Faenzi, Ulrich bundles on K3 surfaces, Algebra Number Theory, $\mathbf{13}$ (2019), 1443--1454. 
	\bibitem{FNR}X. Fang, Y. Nakayama and P. Ren, Homogeneous ACM bundles on Grassmannians of exceptional types, J. Pure Appl. Algebra, $\mathbf{228}$ (2024), no. 11.
	\bibitem{Fang}X. Fang, Homogeneous ACM and Ulrich bundles on rational homogeneous spaces, to appear in Forum Math., (2024), doi: 10.1515/forum-2023-0383.
	\bibitem{Fon}A. Fonarev, Irreducible Ulrich bundles on isotropic Grassmannians, Mosc. Math. J., $\mathbf{16}$ (2016), 711--726. 
	\bibitem{LP}K.-S. Lee and K.-D. Park, Equivariant Ulrich bundles on exceptional homogeneous varieties,  Adv. Geom., $\mathbf{21}$ (2021), 187--205. 
	\bibitem{Lop1}A. F. Lopez, On the existence of Ulrich vector bundles on some surfaces of maximal Albanese dimension, Eur. J. Math., $\mathbf{5}$ (2019), 958--963. 
	\bibitem{Lop}A. F. Lopez. On the existence of Ulrich vector bundles on some irregular surfaces, Proc. Amer. Math. Soc., $\mathbf{149}$ (2021), 13--26. 
	\bibitem{MRPM}R. M. Mir\'{o}-Roig and J. Pons-Llopis, Special Ulrich bundles on regular Weierstrass fibrations, Math. Z., $\mathbf{293}$ (2019), 1431--1441. 
	\bibitem{Nakayama}Y. Nakayama, Homogeneous Ulrich bundles on homogeneous varieties of certain exceptional types, to appear in Kodai Math. J., (2024).
	\bibitem{Ott}G. Ottaviani, Rational homogeneous varieties, Lecture notes for the summer school in Algebraic Geometry in Cortona, $1995$.
	\bibitem{Snow}D.M. Snow, Homogeneous vector bundles, Group Actions and Invariant Theory (Montreal 1988), CMS Conf. Proc. 10, American Mathematical Society, Providence (1989), 193--205.
	\bibitem{Ulrich}B. Ulrich, Gorenstein rings and modules with high numbers of generators, Math. Z., $\mathbf{188}$ (1984), 23--32.
\end{thebibliography}

\end{document}